\newtheorem{theorem}{Theorem}[section]
\newtheorem{lemma}[theorem]{Lemma}
\newtheorem{obs}[theorem]{Observation}
\newtheorem{ques}[theorem]{Question}
\newtheorem{claim}[theorem]{Claim}
\newtheorem{prop}[theorem]{Proposition}
\theoremstyle{definition}
\newtheorem{dfn}[theorem]{Definition}
\newtheorem{rmk}[theorem]{Remark}
\newcommand\ex{\ensuremath{\mathrm{ex}}}
\newcommand{\eps}{\varepsilon}
\newcommand{\om}{\omega}
\newcommand{\al}{\alpha}
\newcommand{\de}{\delta}
\newcommand{\vp}{\varphi}
\newcommand{\mc}[1]{\mathcal{#1}}
\newcommand{\fr}[1]{\mathfrak{#1}}
\newcommand{\bN}{\ensuremath{\mathbb{N}}}
\newcommand{\bP}{\ensuremath{\mathbb{P}}}
\newcommand{\bE}{\ensuremath{\mathbb{E}}}
\newcommand{\bR}{\ensuremath{\mathbb{R}}}
\newcommand{\card}[1]{\left| #1 \right|}
\newcommand{\floor}[1]{\left \lfloor #1 \right \rfloor}
\newcommand{\weight}{w_{k}}
\newcommand{\ceil}[1]{\left \lceil #1 \right \rceil}
\newcommand{\rt}{\right}
\newcommand{\lt}{\left}
\title{Colouring set families without monochromatic $k$-chains}
\author{
Shagnik Das\thanks{Freie Universit\"at Berlin, Institut f\"ur Mathematik, Arnimallee 3, 14195 Berlin, Germany. {\tt{shagnik@mi.fu-berlin.de}}. Research supported by GIF grant G-1347-304.6/2016.}
\and
Roman Glebov\thanks{School of Computer Science and Engineering, Hebrew University, Jerusalem 9190401, Israel. {\tt{roman.l.glebov@gmail.com}}. Research supported by the ERC grant 339096 `High-dimensional combinatorics' at the Hebrew University.}
\and
Benny Sudakov\thanks{Department of Mathematics, ETH, 8092 Zurich, Switzerland. {\tt benjamin.sudakov@math.ethz.ch}. Research supported in part by SNSF grant 200021-175573.}
\and
Tuan Tran\thanks{Department of Mathematics, ETH, 8092 Zurich, Switzerland. {\tt manh.tran@math.ethz.ch}. Research supported by the Alexander Humboldt Foundation.
}
}
\begin{document}
\maketitle
\begin{abstract}
A coloured version of classic extremal problems dates back to Erd\H{o}s and Rothschild, who in 1974 asked which $n$-vertex graph has the maximum number of 2-edge-colourings without monochromatic triangles. They conjectured that the answer is simply given by the largest triangle-free graph. Since then, this new class of coloured extremal problems has been extensively studied by various researchers. In this paper we pursue the Erd\H{o}s--Rothschild versions of Sperner's Theorem, the classic result in extremal set theory on the size of the largest antichain in the Boolean lattice, and Erd\H{o}s' extension to $k$-chain-free families.

Given a family $\mc F$ of subsets of $[n]$, we define an \emph{$(r,k)$-colouring} of $\mc F$ to be an $r$-colouring of the sets without any monochromatic $k$-chains $F_1 \subset F_2 \subset \hdots \subset F_k$.  We prove that for $n$ sufficiently large in terms of $k$, the largest $k$-chain-free families also maximise the number of $(2,k)$-colourings.  We also show that the middle level, $\binom{[n]}{\floor{n/2}}$, maximises the number of $(3,2)$-colourings, and give asymptotic results on the maximum possible number of $(r,k)$-colourings whenever $r(k-1)$ is divisible by three.

\end{abstract}

\section{Introduction}

Sperner's Theorem on the size of the largest antichain in the Boolean lattice, which dates back to 1928, is one of the fundamental theorems in extremal set theory.  An antichain is a family of sets $\mc F \subseteq 2^{[n]}$ where no set is contained in another; that is, $F_1 \not\subset F_2$ for all distinct $F_1, F_2 \in \mc F$.  Sperner~\cite{Spe28} proved that $2^{[n]}$ does not have any antichains with more than $\binom{n}{\floor{n/2}}$ sets, a bound that is easily seen to be tight by considering the family of all sets of size $\floor{n/2}$.

Since its inception, Sperner's Theorem has inspired a great deal of further research, as various extensions have been proven. For instance, Erd\H{o}s~\cite{Erd45} determined the largest family $\mc F \subseteq 2^{[n]}$ without a $k$-chain; that is, without sets $F_1 \subset F_2 \subset \hdots \subset F_k$.  Sperner's Theorem corresponds to the case $k = 2$, and Erd\H{o}s extended this by showing one ought to take the $k-1$ largest uniform levels in $2^{[n]}$.  When $n-k$ is odd, there are two isomorphic extremal families, and we shall not distinguish between them in this paper.

In this paper we solve some instances of the Erd\H{o}s--Rothschild extensions of these theorems.  Before presenting our new results, we first introduce the class of Erd\H{o}s--Rothschild problems, which are multicoloured versions of classic extremal problems.

\subsection{Erd\H{o}s--Rothschild problems}

One of the earliest results in extremal combinatorics was obtained in 1907, when Mantel~\cite{Man07} showed that any $n$-vertex graph with more than $\floor{n^2 / 4}$ edges must contain a triangle, a bound that the complete balanced bipartite graph shows to be best possible.  This was later extended by Tur\'an~\cite{Tur41}, who determined the largest number of edges an $n$-vertex $K_t$-free graph can have, for any $t$.  More generally, the Tur\'an number of a graph $H$, denoted $\ex(n,H)$, is the maximum number of edges in an $H$-free graph on $n$ vertices.

Various extensions and variations of the Tur\'an problem have been pursued through the years.  One such problem was proposed by Erd\H{o}s and Rothschild~\cite{Erd74} in 1974.  They asked which $n$-vertex graph had the maximum number of two-edge-colourings without monochromatic triangles.  Clearly, since the complete balanced bipartite graph is itself triangle-free, any two-colouring of its edges is monochromatic-triangle-free, and hence one can have at least $2^{\floor{n^2/4}}$ such colourings.

In order to do better, one would have to take a graph with more edges, which, by Mantel's Theorem, implies the existence of triangles.  These triangles impose restrictions on the two-colourings, as their edges cannot be coloured monochromatically.  Erd\H{o}s and Rothschild believed that the restrictions from the triangles would more than counteract the extra possibilities offered by the additional edges, and conjectured in~\cite{Erd74} that $2^{\floor{n^2/4}}$ is in fact the best one can do.  It was some twenty years before Yuster~\cite{Yus96} proved the conjecture for $n\ge 6$.

There are some obvious generalisations of this problem of Erd\H{o}s and Rothschild --- one may ask it for graphs other than the triangle, and one may increase the number of colours used.  Let an $(r,F)$-colouring of a graph $G$ be an $r$-colouring of its edges without any monochromatic copies of $F$.  The question is then to determine which $n$-vertex graphs $G$ maximise the number of $(r,F)$-colourings.

Once again, a natural lower bound is obtained by considering all $r$-edge colourings of the largest $F$-free graph, which shows that the maximum number of $(r,F)$-colourings of an $n$-vertex graph is at least $r^{\ex(n,F)}$.  In 2004, Alon, Balogh, Keevash and Sudakov~\cite{ABKS04} greatly extended Yuster's result by showing this lower bound was tight whenever 
$F = K_t$ for $t \ge 3$, $r \in \{2, 3\}$ and $n\ge n_0(t,r)$ sufficiently large (see \cite{HJ18} for an improved bound on $n_0(t,r)$).
Interestingly, they further demonstrated that this was not the case when one has four or more colours, providing some better bounds in this range.  Exact results were later obtained by Pikhurko and Yilma~\cite{PY12}, who determined which $n$-vertex graphs maximise the number of $(4,K_3)$- and $(4,K_4)$-colourings.  Pikhurko, Staden and Yilma~\cite{PSY17} have recently introduced an asymmetric version of this problem, reducing its log-asymptotic solution to a large but finite optimisation problem.

In recent years, there have been a variety of papers studying the Erd\H{o}s--Rothschild problem in various settings.  Lefmann, Person, R\"odl and Schacht~\cite{LPRS09} studied the problem in the setting of three-uniform hypergraphs, with monochromatic copies of the Fano plane forbidden, before Lefmann, Person and Schacht~\cite{LPS10} considered arbitrary $k$-uniform hypergraphs.  Further results along this line of research can be found in~\cite{HKL12a, HKL14, HKL15, LP13}.  Moving the problem into the domain of extremal set theory, Hoppen, Kohayakawa and Lefmann~\cite{HKL12b} solved the Erd\H{o}s--Rothschild extension of the famous Erd\H{o}s--Ko--Rado Theorem~\cite{EKR61}.  Hoppen, Lefmann and Odermann~\cite{HLO16} provided initial results for the vector space analogue of the Erd\H{o}s--Ko--Rado Theorem, before Clemens, Das and Tran~\cite{CDT16} presented a unified proof extending some of these results. In the context of additive combinatorics, the Erd\H{o}s--Rothschild extension for sum-free sets has been pursued by H\`an and Jim\'enez~\cite{HJ17} for abelian groups and Liu, Sharifzadeh and Staden~\cite{LSS17} for subsets of the integers.

\subsection{Our results}

Following their work on intersecting vector spaces, Hoppen, Lefmann and Odermann~\cite{HLO16} suggested the investigation of Erd\H{o}s--Rothschild problems in the context of the power set lattice.  Sperner's Theorem on antichains is arguably the most important extremal result in this setting, and we consider the corresponding Erd\H{o}s--Rothschild extension, as well as that of Erd\H{o}s's result on $k$-chain-free families.

To this end, given a set family $\mc F \subseteq 2^{[n]}$, we define an \emph{$(r,k)$-colouring} of $\mc F$ to be an $r$-colouring of the sets in $\mc F$ without any monochromatic $k$-chains $F_1 \subset F_2 \subset \hdots \subset F_k$.  Given $r \ge 2$ and $n \ge k \ge 2$, the goal is to determine which families have the maximum possible number of $(r,k)$-colourings.  Let us define $f(r,k;n)$ to be this maximum.

Let us first consider the case $k = 2$.  Here we forbid a monochromatic $2$-chain or, in other words, require that each colour class be an antichain.  As before, any $r$-colouring of an antichain is an $(r,2)$-colouring.  By Sperner's Theorem~\cite{Spe28}, the largest antichain in $2^{[n]}$ has size $\binom{n}{\floor{n/2}}$.  The lower bound
\begin{equation} \label{ineq:2chainLB} 
	f(r,2; n) \ge r^{\binom{n}{\floor{n/2}}}
\end{equation}
thus follows, and we seek to determine if this is best possible.

This is somewhat trivial when $r = 2$.  Indeed, let $\mc F \subseteq 2^{[n]}$ be a family maximising the number of $(2,2)$-colourings, and let $\mc F_0 \subseteq \mc F$ be the subfamily of all minimal sets in $\mc F$.  Observe that any $(2,2)$-colouring of $\mc F$ is determined by the colouring of $\mc F_0$, since any set $F \in \mc F \setminus \mc F_0$ contains a set in $\mc F_0$, and must thus be oppositely-coloured.  Hence $\mc F$ can have at most $2^{\card{\mc F_0}}$ $(2,2)$-colourings, and since $\mc F_0$ is an antichain, we have $\card{\mc F_0} \le \binom{n}{\floor{n/2}}$.  Thus~\eqref{ineq:2chainLB} is tight for $r = 2$, and it is not hard to show that one only has equality for the largest antichains; that is, when $\mc F$ is (one of) the middle layer(s) of the Boolean lattice.

To show that the lower bound remains tight for $r= 3$ requires considerably more work, and is the content of our first theorem.

\begin{restatable}{theorem}{threecolours}
\label{thm:3-colours}
There is some $n_0 \in \mathbb{N}$ such that for every integer $n \ge n_0$, the number of $(3,2)$-colourings of a family $\mc F \subseteq 2^{[n]}$ is at most $3^{\binom{n}{\floor{n/2}}}$.  Moreover, we have equality if and only if $\mc F = \binom{[n]}{\floor{n/2}}$ or $\mc F = \binom{[n]}{\ceil{n/2}}$.
\end{restatable}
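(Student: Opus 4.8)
The plan is to count $(3,2)$-colourings via the minimal sets of $\mc F$ and a supersaturation argument. Let $\mc F \subseteq 2^{[n]}$ be an extremal family, let $\mc F_0 \subseteq \mc F$ be the subfamily of minimal sets, and write $m = \card{\mc F_0} \le \binom{n}{\floor{n/2}}$. Since every set in $\mc F \setminus \mc F_0$ lies above some set in $\mc F_0$, in any $(3,2)$-colouring each $F \in \mc F \setminus \mc F_0$ is forbidden at most one colour per set of $\mc F_0$ below it; the crude bound $3^m \cdot 2^{\card{\mc F \setminus \mc F_0}}$ is useless on its own, so the real content is to show that one cannot simultaneously have $m$ close to $\binom{n}{\floor{n/2}}$ \emph{and} many sets above $\mc F_0$ with genuine freedom. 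First I would dispose of the case where $\card{\mc F}$ is large (say $\card{\mc F} \ge (1+\eps)\binom{n}{\floor{n/2}}$): here a Kleitman/Lubell-type supersaturation estimate guarantees many $2$-chains in $\mc F$, and a counting argument (each $2$-chain kills a constant fraction $\tfrac13$ of colour-pairs on its two sets) forces the number of $(3,2)$-colourings to drop below $3^{\binom{n}{\floor{n/2}}}$. This reduces us to families of size at most $(1+\eps)\binom{n}{\floor{n/2}}$.

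Next, for families of bounded size, I would use a stability/entropy argument. The number of $(3,2)$-colourings of $\mc F$ is at most $3^{\card{\mc F}}$ times the probability that a uniformly random $3$-colouring of $\mc F$ is proper, and this probability is governed by the $2$-chains: if $\mc F$ contains $t$ disjoint (or suitably spread-out) $2$-chains, the probability is at most $(2/3)^{t}$ up to lower-order corrections, so the colouring count is at most $3^{\card{\mc F}} (2/3)^{t}$. One then needs a \emph{container-type} or direct combinatorial lemma saying that a $k$-chain-free family of near-extremal size either \emph{is} essentially the middle layer, or has so many disjoint $2$-chains that $3^{\card{\mc F}}(2/3)^{t} < 3^{\binom{n}{\floor{n/2}}}$. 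Concretely, if $\mc F$ has $s$ sets outside the middle layer, a normalised-matching (LYM/Kruskal--Katona) argument produces $\Omega(s)$ disjoint $2$-chains inside $\mc F$, and since $\card{\mc F} \le \binom{n}{\floor{n/2}} + s$ we get the bound $3^{\binom{n}{\floor{n/2}}+s}(2/3)^{\Omega(s)} < 3^{\binom{n}{\floor{n/2}}}$ once $s \ge 1$, with the implied constants made explicit. The boundary cases $s=0$ (so $\mc F \subseteq \binom{[n]}{\floor{n/2}}$, whence the count is at most $3^{\card{\mc F}} \le 3^{\binom{n}{\floor{n/2}}}$ with equality iff $\mc F$ is the full middle layer) then give the uniqueness part.

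I expect the main obstacle to be the quantitative trade-off in the intermediate regime: replacing a few middle-layer sets by sets from adjacent layers creates $2$-chains, but possibly only few of them if the new sets are chosen to overlap little, and one must show this small number of forbidden colour-patterns still outweighs the (non-)gain in $\card{\mc F}$. Making the $(2/3)^{\Omega(s)}$-type saving rigorous requires either a careful inclusion--exclusion / entropy-compression estimate on the proper-colouring probability (so that overlapping $2$-chains do not ruin the independence heuristic) or an appeal to a hypergraph-container bound for the "colouring hypergraph" whose edges are monochromatic $2$-chains. A secondary technical point is handling $2$-chains that are not vertex-disjoint: I would group the offending sets by how many middle-layer sets they sit above or below and argue layer by layer, using Kruskal--Katona to lower-bound shadows and thus the number of chains. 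Once these estimates are in place, choosing $n_0$ large enough to absorb all the polynomial-in-$n$ error terms finishes the proof.
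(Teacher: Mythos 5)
Your plan has a genuine quantitative gap at its core: the saving of a factor $2/3$ per (vertex-)disjoint $2$-chain is too weak to prove the theorem, even in principle. Since the middle layer is an antichain, every comparable pair in $\mc F$ uses at least one of the $s$ sets lying outside $\binom{[n]}{\floor{n/2}}$, so a disjoint collection of $2$-chains has size $t \le s$, and your bound becomes $3^{\card{\mc F}}(2/3)^{t} \ge 3^{\binom{n}{\floor{n/2}}+s}(2/3)^{s} = 2^{s}\,3^{\binom{n}{\floor{n/2}}}$, which \emph{exceeds} the target for every $s \ge 1$. (To beat $3^{s}$ you would need more than $\log 3/\log(3/2) \approx 2.7$ disjoint chains per extra set, which is impossible.) The situation is even worse for non-disjoint chains: for $\mc F = \binom{[n]}{\floor{n/2}} \cup \{[n]\}$ Kleitman gives $\binom{n}{\floor{n/2}}$ comparable pairs but only one disjoint $2$-chain, so your bound is $2\cdot 3^{\binom{n}{\floor{n/2}}}$. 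The phenomenon you must capture is not that a comparable pair forbids $1/3$ of its colour patterns, but that a set comparable to \emph{many} others typically sees at least two colours in its neighbourhood and hence has its colour essentially \emph{determined} — a factor of roughly $1$ rather than $2$ per such set. Your proposed fixes (inclusion--exclusion, Kruskal--Katona shadows, containers) do not recover this: the container bound for $(3,2)$ only gives $3^{(1+\eps)\binom{n}{\floor{n/2}}}$, which is log-asymptotic, not exact.

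The paper's proof is built precisely around the "colour is determined" effect. After reducing to $3$-chain-free families via Mirsky and splitting $\mc F$ into isolated sets $\mc I$, low-degree sets $\mc S$ (degree at most $\sqrt n$, removed iteratively and controlled by Kleitman's supersaturation theorem), and a bipartite part $\mc A \cup \mc B$ of minimum degree at least $\sqrt n$ in the comparability graph, it reveals a typical colouring by first querying a small random subfamily $\mc X \subseteq \mc B$ (together with whether each queried set's neighbourhood in $\mc A$ is monochromatic). A union-bound/Markov argument shows that for most colourings almost every set of $\mc A$ then has at most two available colours, and almost every set of $\mc B$ at most one or two, yielding a count of the form $3^{\binom{n}{\floor{n/2}}}\cdot 0.86^{\binom{n}{\floor{n/2}}-\card{\mc I}}$, which forces $\mc F = \mc I$ to be a full middle layer. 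Your high-level instincts (supersaturation for large families, Sperner for antichains, a stability step in between) match the paper's, but the central counting mechanism you propose cannot close the argument; you would need to replace the disjoint-chain accounting with an argument that exploits high degrees to pin down colours almost exactly.
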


Just as for the Erd\H{o}s--Rothschild extension for Tur\'an's Theorem, we cannot expect equality in~\eqref{ineq:2chainLB} to hold for larger $r$.  Indeed, for $r = 4$, suppose $n = 2m + 1$, and consider the family $\mc F = \binom{[n]}{m} \cup \binom{[n]}{m + 1}$.  Partition the four colours into two pairs, and colour the sets in $\binom{[n]}{m}$ with one of the pairs, and those in $\binom{[n]}{m+1}$ with the other.  Each such colouring, of which there are $2^{\binom{n}{m}} \cdot 2^{\binom{n}{m+1}} = 4^{\binom{n}{\floor{n/2}}}$, is clearly a $(4,2)$-colouring.  This matches the lower bound of~\eqref{ineq:2chainLB}.  However, as there are six ways to partition the colours, and these give rise to almost disjoint sets of $(4,2)$-colourings, we find that $\mc F$ has more $(4,2)$-colourings than the largest antichain $\binom{[n]}{\floor{n/2}}$.  For larger values of $r$, this family $\mc F$ has exponentially more $(r,2)$-colourings than the lower bound given above.

We now turn to the case when $k \ge 3$.  Recall that Erd\H{o}s~\cite{Erd45} proved that the largest $k$-chain-free family in $2^{[n]}$ consists of the $k-1$ largest uniform levels; that is, the collection of all sets whose sizes lie between $\floor{(n-k+2)/2}$ and $\floor{(n+k-2)/2}$.  Let the size of this family be denoted by $m_{k-1}$, so that $m_{k-1} = \sum_{i = \floor{(n-k+2)/2}}^{\floor{(n+k-2)/2}} \binom{n}{i}$.  Since every $r$-colouring of this family is an $(r,k)$-colouring, we have the inequality
\begin{equation} \label{ineq:kchainLB}
	f(r,k;n) \ge r^{m_{k-1}}.
\end{equation}

The case $k \ge 3$ appears to be rather more complicated than $k = 2$.  Indeed, it was trivial to bound the number of $(2,2)$-colourings that a family could have.  When $r = 2$ and $k \ge 3$, the lower bound of~\eqref{ineq:kchainLB} is again sharp, but our proof is much more involved, and forms our next result.

\begin{restatable}{theorem}{twocolours}
\label{thm:2-colours}
There exists an absolute constant $C>0$ such that for integers $n$ and $k$ with $k\ge 2$ and $n\ge Ck^4\log k$, the number of $(2,k)$-colourings of a family $\mc F \subseteq 2^{[n]}$ is at most $2^{m_{k-1}}$.  Moreover, we have equality if and only if $\mc F$ is a $k$-chain-free family of maximum size.
\end{restatable}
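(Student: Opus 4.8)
The plan is to reduce to a single structured case and then show that a family properly larger than a maximum $k$-chain-free family is so rich in $k$-chains that most of its $2$-colourings are illegal. First, observe that a $(2,k)$-colouring of $\mc F$ is precisely a partition $\mc F=\mc R\sqcup\mc B$ into two families of height (longest chain length) at most $k-1$, and by Mirsky's theorem such a partition exists if and only if $\mc F$ itself has height at most $2(k-1)$. Hence, if $\mc F$ contains a chain of length $2k-1$ then one colour class must contain $k$ of its members, so $\mc F$ has no $(2,k)$-colouring and the bound is trivial. If $\mc F$ is $k$-chain-free then every $2$-colouring is a $(2,k)$-colouring, so by Erd\H os's theorem there are $2^{|\mc F|}\le 2^{m_{k-1}}$ of them, with equality exactly when $\mc F$ is a maximum $k$-chain-free family; and if $\mc F$ has a $k$-chain but $|\mc F|\le m_{k-1}$, the monochromatic colouring is illegal, so there are strictly fewer than $2^{|\mc F|}\le 2^{m_{k-1}}$. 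Thus it remains to treat $(2k-1)$-chain-free families $\mc F$ with $|\mc F|=m_{k-1}+t$ for some $t\ge 1$ and prove that such $\mc F$ have strictly fewer than $2^{m_{k-1}}$ $(2,k)$-colourings; the description of the equality cases then follows automatically.

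For this main case I would fix a maximum $k$-chain-free family $\mc M$ (the $k-1$ middle layers). Since $|\mc F|>|\mc M|$ and $\mc F$ is $(2k-1)$-chain-free, $\mc F$ has at least $t$ sets outside $\mc M$, each lying within $k-1$ layers above the top (or below the bottom) of $\mc M$. The crucial structural point is that, because $n$ is large, any such surplus set $G$ is comparable to an enormous number of $(k-1)$-chains contained in $\mc M$ --- polynomially many in $n$, and in particular far more than $2^{k}$. A second-moment estimate on a uniformly random $2$-colouring of $\mc M$ (controlling the correlations between these pairwise-overlapping chains) then shows that, with probability bounded away from $1/2$, at least one such $(k-1)$-chain is monochromatic, which forces the colour of $G$. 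Bounding $c(\mc F)\le\sum_\chi\prod_i p_i(\chi)$, where $\chi$ ranges over the $2$-colourings of $\mc F\cap\mc M$ and $p_i(\chi)\in\{0,1,2\}$ is the number of colours remaining legal for the $i$-th surplus set subject only to the constraints from $\mc M$, the estimate gives $\bE_\chi[p_i(\chi)]<1$ with a definite gap, and the same ``spread-out'' property bounds the correlations between distinct surplus sets, so that $c(\mc F)<2^{|\mc F\cap\mc M|}\le 2^{m_{k-1}}$. When $\mc F\cap\mc M$ is a proper subfamily of $\mc M$ the bound $2^{|\mc F\cap\mc M|}$ already has room to spare; when $\mc F\supseteq\mc M$, the gap in $\bE_\chi[p_i]$ supplies the strictness for every $t\ge 1$. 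When $t$ is a constant fraction of $m_{k-1}$ a crude supersaturation bound is enough; the delicate version above is needed only for small $t$, where a preliminary stability step first shows that $\mc F$ must be close to some $\mc M$.

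The main obstacle is making all of this quantitatively efficient enough to work already at $n=\Theta(k^{4}\log k)$ rather than at $n$ exponential in $k$. A naive union bound --- a fixed $k$-chain is monochromatic in only a $2^{1-k}$ fraction of colourings --- would force $n$ to be exponential in $k$; the argument must instead extract a constant-probability forcing event from the polynomially-many $(k-1)$-chains comparable to each surplus set, and keep these estimates robust under conditioning on the legality events of the other surplus sets. The hardest surplus sets are those in the layers immediately adjacent to $\mc M$, where the supply of comparable $(k-1)$-chains, though still polynomial in $n$, is smallest; it is there that the hypothesis $n\ge Ck^{4}\log k$ is really needed, and it is there that upgrading the estimates to a genuinely strict inequality for all $t\ge 1$ requires the most care.
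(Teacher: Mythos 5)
Your opening reductions are fine and coincide with the paper's: a family admitting any $(2,k)$-colouring is $(2k-1)$-chain-free, and the cases $\card{\mc F}\le m_{k-1}$ are immediate. The gap is in the main case, and it is twofold. First, the framing around a fixed extremal family $\mc M$ does not get off the ground: a $(2k-1)$-chain-free family of size $m_{k-1}+t$ can be very far from the $k-1$ middle layers even for small $t$ (it may be spread over $2k-2$ partial antichains --- say, half of each of the $2k-2$ middle layers), and no ``preliminary stability step'' is available from the size hypothesis alone; any stability would have to be extracted from the assumed lower bound on the number of colourings, which your set-up does not do. Consequently a surplus set $G$ need not see any $(k-1)$-chain inside $\mc F\cap\mc M$, and more generally $\mc F$ can contain many sets comparable to only few other sets of $\mc F$, for which no forcing argument of any kind exists. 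The paper handles exactly these sets by a separate supersaturation mechanism (the weighted LYM inequality of Lemmas~\ref{lem:2-supersat} and~\ref{lem:transference}), showing that a family with many such sparse sets cannot be large enough to compete; nothing in your proposal plays this role.

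Second, even where forcing is available, the quantitative claims are asserted rather than proved, and they are where the difficulty of the theorem lives. To get $\bE_\chi[p_i]<1$ you need, with probability bounded away from zero, a red monochromatic \emph{and} a blue monochromatic comparable $(k-1)$-chain simultaneously; each individual chain is monochromatic with probability $2^{2-k}$, and making a second-moment argument deliver this uniformly at $n=\Theta(k^4\log k)$ --- and then controlling $\bE_\chi[\prod_i p_i]$ over up to $\Theta(m_{k-1})$ heavily correlated surplus sets, where for a typical $\chi$ most $p_i$ vanish and the sum is carried by atypical colourings --- is precisely the hard part. The paper avoids second moments entirely: it builds the layered structure adaptively from $\mc F$ itself (Mirsky decomposition plus shifting), and uses an entropy-efficient branching encoding (Lemma~\ref{lem:branch-colour}) that reveals the first ``wrong-coloured'' neighbour along a chain and pays only $\eps=1/(500k^2)$ bits per set forced into $\mc R$, plus $O(k\log(1/\eps))$ bits per operation; that device, not a probabilistic estimate over monochromatic chains, is what makes the threshold polynomial in $k$. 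So your reductions and your diagnosis of the obstacles are sound, but the two central components --- a treatment of poorly connected sets and an efficient forcing mechanism --- are missing.
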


The common strategy for proving Theorems \ref{thm:3-colours} and \ref{thm:2-colours} is to decompose any family
$\mc F$ with at least one $(r,k)$-colouring into a family $\mc F'$ with few $k$-chains, and a family $\mc F''$ whose elements are contained in many $k$-chains.\footnote{The actual proofs use much more complex decompositions.} Those $k$-chains that contain at least one member from $\mc F''$ impose many restrictions on our colourings. On the other hand, it follows from the supersaturation results that the size of $\mc F'$ is at most $(k-1+o(1))\binom{n}{\floor{n/2}}$. Piecing this information together allows us to show that if $\mc F$ has the maximum number of $(r,k)$-colourings, then $\mc F''=\emptyset$ and $\mc F=\mc F'$ is a union of $k-1$ layers of the Boolean lattice.

For larger values of $r$, it is fairly straightforward to see that the lower bound in~\eqref{ineq:kchainLB} is not best possible; indeed, for $r \ge 5$, one can again do exponentially better.  Our final result uses the machinery of hypergraph containers to provide upper bounds on $f(r,k;n)$. These determine $f(r,k;n)$ log-asymptotically for more than half of pairs $(r,k)$.

\begin{restatable}{prop}{logasym}
\label{prop:logasym}
For $k \ge 2$, $r \ge 3$ and $\eps > 0$ there exists $n_0(r,k,\eps) \in \bN$ such that for $n \ge n_0(r,k,\eps)$ we have
\[ f(r,k;n) \le 3^{\frac13 r (k - 1 + \eps) \binom{n}{\floor{n/2}}}. \]
Moreover, if $r(k-1)$ is divisible by three, then
\[ f(r,k;n) \ge  3^{\frac13 r (k - 1 - \eps) \binom{n}{\floor{n/2}}} \]
as well.
\end{restatable}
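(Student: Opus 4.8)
\textbf{Proof proposal for Proposition~\ref{prop:logasym}.}

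The plan is to reduce the counting of $(r,k)$-colourings to a structural statement about colourings of the layers near the middle, using the hypergraph container method to handle the rest of the lattice. First I would recall the natural construction that gives the lower bound: when $r(k-1)$ is divisible by three, partition the $r$ colours into $r/?$-sized blocks---more precisely, since $r(k-1)/3$ is an integer, we can pick $m := \frac13 r(k-1)$ disjoint ``colour patterns'' each of which is a triple of colours, and use these to $3$-colour each of the $k-1$ middle layers so that no colour class contains a $2$-chain within a layer (trivial, as a layer is an antichain) and, by spreading distinct colour-triples across the layers in a balanced way, no colour class contains a $k$-chain across layers. Counting the number of such colourings of $\mc F = \binom{[n]}{\geq \lfloor(n-k+2)/2\rfloor,\leq \lfloor(n+k-2)/2\rfloor}$ gives roughly $3^{m\binom{n}{\lfloor n/2\rfloor}}=3^{\frac13 r(k-1)\binom{n}{\lfloor n/2\rfloor}}$ colourings, and the $\eps$ slack absorbs the lower-order layers and the combinatorial factors from how the triples are assigned; this is the easy direction.

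For the upper bound, the key step is to set up the auxiliary hypergraph whose vertex set is $\mc F$ and whose edges are the $k$-chains in $\mc F$ (a $k$-uniform hypergraph if we take chains of exactly $k$ sets, restricting to the inclusion-maximal relevant chains). An $(r,k)$-colouring is precisely a partition of $V$ into $r$ independent sets of this hypergraph. The hypergraph container theorem then yields a family of at most $\exp(o(|\mc F|))$ containers, each of size at most $(1+\eps')$ times the independence number, such that every independent set lies in some container; since by Erd\H{o}s's theorem the largest $k$-chain-free family has size $m_{k-1} = (k-1+o(1))\binom{n}{\lfloor n/2\rfloor}$, each container has size at most $(k-1+\eps')\binom{n}{\lfloor n/2\rfloor}$. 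The number of $(r,k)$-colourings of $\mc F$ is then at most the number of ways to choose an ordered $r$-tuple of containers (at most $\exp(o(|\mc F|))^r$ choices) times the number of ways to assign each element of $\mc F$ to one of the $r$ containers covering it. Here one needs the supersaturation/container machinery to give that $|\mc F|$ itself is not too large---e.g. $|\mc F| \le O(k)\binom{n}{\lfloor n/2\rfloor}$---so that the $\exp(o(|\mc F|))$ term is genuinely subexponential in $\binom{n}{\lfloor n/2\rfloor}$; this follows because a family much larger than $m_{k-1}$ would contain so many $k$-chains that no valid colouring exists, or can be pruned.

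Given the container decomposition, the counting reduces to: for a fixed ordered $r$-tuple of containers $(C_1,\dots,C_r)$ with $\sum_i |C_i| \le r(k-1+\eps')\binom{n}{\lfloor n/2\rfloor}$, the number of functions $\mc F \to [r]$ with the element assigned to $i$ lying in $C_i$ is at most $\prod_{x \in \mc F} |\{i : x \in C_i\}|$. By convexity (AM--GM) this product is maximised, subject to the constraint on $\sum_i|C_i| = \sum_x |\{i : x\in C_i\}|$, when each $x$ is covered by the same number $t$ of containers, giving a bound of the form $t^{|\mc F|}$ with $t \le r(k-1+\eps')\binom{n}{\lfloor n/2\rfloor}/|\mc F|$; optimising the trade-off between $|\mc F|$ (which wants to be small to make $t$ large but is bounded) and $t$ shows the product is at most $3^{\frac13 r(k-1+\eps)\binom{n}{\lfloor n/2\rfloor}}$, because $x \mapsto x^{1/x}$ (equivalently, maximising $s\log t$ subject to $st \le $ const) peaks around $t = 3$---this is exactly why the base $3$ appears. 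The main obstacle, and the step requiring the most care, is making the container argument quantitatively clean: one must verify the codegree/boundedness conditions for the $k$-chain hypergraph (the number of $k$-chains through a fixed set, and through a fixed pair of sets, must be suitably controlled), ensure the $o(|\mc F|)$ error in the container count is genuinely absorbed into $\eps\binom{n}{\lfloor n/2\rfloor}$ uniformly, and rule out pathological large families $\mc F$ before applying containers. Once those technical inputs are in place, the optimisation over $(|\mc F|, t)$ and the appearance of the constant $3$ are a short convexity computation.
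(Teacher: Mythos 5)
Your proposal takes essentially the same route as the paper: containers for $k$-chain-free families (the paper simply cites a ready-made result of Collares Neto and Morris, so none of the codegree verification you worry about is needed), a pigeonhole over ordered $r$-tuples of containers, the bound $\prod_F t(F)$ with the convexity optimisation $t \le 3^{t/3}$ yielding the base $3$, and the layered construction for the lower bound. Two small remarks: the optimisation needs no control on $\card{\mc F}$ at all (each factor satisfies $t(F) \le 3^{t(F)/3}$ individually, and the container count is bounded absolutely by $\exp(\eps'\binom{n}{\floor{n/2}})$, so the pruning step you flag is unnecessary), and in the lower bound the $\tfrac13 r(k-1)$ colour-triples cannot be \emph{disjoint} for $k \ge 3$ since that would require $r(k-1) > r$ colours --- the correct statement, which the paper verifies with an explicit cyclic assignment, is that each colour is used on exactly $k-1$ levels while each level receives exactly three colours.
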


\subsection{Organisation and notation}\label{subsec:orgnot}

The remainder of this paper is organised as follows.  In Section~\ref{sec:3-colours} we prove Theorem~\ref{thm:3-colours}, and then prove Theorem~\ref{thm:2-colours} in Section~\ref{sec:2-colours}.  Proposition~\ref{prop:logasym} is proved in Section~\ref{sec:logasym}.  We remark that these sections are independent of one another, and can be read in any order.  We close the paper with some concluding remarks in Section~\ref{sec:conclusion}.

One notion we shall use throughout is that of a \emph{comparability graph}.  Given a family $\mc F$ of subsets of $[n]$, the comparability graph of $\mc F$ is the graph $G(\mc F)$ whose vertices are the sets in $\mc F$, with an edge $\{F_1,F_2\}$ whenever $F_1\subsetneq F_2$.  For any set $F \subseteq [n]$, the \emph{up-degree} (respectively \emph{down-degree}) of $F$ in $\mc F$, denoted by $d^{+}(F,\mc F)$ (respectively $d^{-}(F,\mc F)$), is the number of sets $F'$ in $\mc F$ such that $F \subsetneq F'$ (respectively $F' \subsetneq F$). The \emph{degree} of $F$ in $\mc F$, denoted by $d(F,\mc F)$, is the sum of its up-degree and down-degree in $\mc F$, and counts the sets in $\mc F$ comparable to $F$ (excluding $F$ itself, if $F \in \mc F$).  We denote by $N^+(F ,\mc F)$, $N^-(F, \mc F)$ and $N(F, \mc F)$ the subfamilies of up-, down- and all neighbours of a set $F$ in the family $\mc F$.

That apart, we make use of standard combinatorial notation.  We denote by $[n]$ the set $\{ 1, 2, \hdots, n\}$, and shall take that to be the ground set for our set families.  Given a set $X$ and some $k \in \mathbb{N}$, $\binom{X}{k}$ is the family of all $k$-subsets of $X$, while $2^X$ is the family of all subsets of $X$, regardless of size.  Finally, unless stated otherwise, all logarithms are binary.

\section{Three-colourings without monochromatic two-chains}\label{sec:3-colours}

In this section, we will show that the number of $(3,2)$-colourings of a set family is maximised only by the largest antichains; that is, by the middle levels, $\binom{[n]}{\floor{n/2}}$ or $\binom{[n]}{\ceil{n/2}}$.

\threecolours*

Before delving into the details of the proof, we provide a heuristic argument for why the largest antichains should also maximise the number of $(3,2)$-colourings.  In an antichain, a $(3,2)$-colouring can be formed by arbitrarily assigning each of the sets one of the three colours.  Therefore, in order to have more $(3,2)$-colourings, one must consider a larger family.

By Sperner's Theorem, any larger family must contain comparable pairs.  These pairs place restrictions on our colourings: once the colour of a given set is fixed, any comparable sets have at most two of the three colours available to them. To overcome these restrictions, the family must in fact be significantly larger than the largest antichains.

However, one can then show that the family must contain many sets that are comparable to a large number of other sets. This in turn leads to even stronger restrictions on the $(3,2)$-colourings of the family.  Indeed, consider such a set $F$, and let $\mc G$ be the sets in the family to which $F$ is comparable.  In a typical colouring, we would expect to see at least two colours appearing in $\mc G$, in which case the colour of $F$ is determined, rendering this set redundant.  In order for $F$ to actually increase the number of $(3,2)$-colourings, we would need $\mc G$ to be monochromatic, which is a very restrictive condition.

It thus appears unlikely that larger families could have more $(3,2)$-colourings.  In what follows, we formalise this argument and show this intuition to be true.

\begin{proof}

Let $\mc F \subseteq 2^{[n]}$ maximise the number of $(3,2)$-colourings and let $\fr C$ be the set of all $(3,2)$-colourings of $\mc F$.  The lower bound of~\eqref{ineq:2chainLB} implies
\begin{equation}\label{good-colourings_lower-I}
\card{\fr C} \ge 3^{\binom{n}{\floor{n/2}}}.
\end{equation}

We must establish a matching upper bound on $\card{\fr C}$. We begin by deducing some structural information about $\mc F$.  Suppose $\mc F$ contains a $3$-chain $F_1\subset F_2\subset F_3$.  If any two of $F_1, F_2$ and $F_3$ shared the same colour, they would form a monochromatic $2$-chain.  Hence in every colouring in $\fr C$, $F_1$, $F_2$ and $F_3$ must receive pairwise different colours, and thus the colour of $F_3$ is determined by those of $F_1$ and $F_2$.  It follows that $\mc F \setminus \{ F_3 \}$ has at least as many $(3,2)$-colourings as $\mc F$.

We therefore may assume that $\mc F$ is $3$-chain-free, and can thus be partitioned into two antichains by Mirsky's theorem~\cite{Mir71}.\footnote{This does not affect our claims of uniqueness; if we started with a family $\mc F$ that was not the middle level of the Boolean lattice, the $3$-chain-free subfamily would also not be the middle level, as it would still have comparable pairs.} In other words, the comparability graph $G(\mc F)$ of $\mc F$, as defined in Section~\ref{subsec:orgnot}, is bipartite.  We denote by $\mc I$ the subfamily of $\mc F$ consisting of isolated vertices of $G(\mc F)$; that is, those sets in $\mc F$ that are incomparable to all other sets in $\mc F$.

Next, we iteratively remove from the remainder $\mc F\setminus \mc I$ sets of degree at most $\sqrt{n}$ in the induced subgraph of $G(\mc F)$, and let $\mc S$ be the subfamily of the sparse sets that have been removed.  We are left with a bipartite subgraph of minimum degree at least $\sqrt{n}$, and call the subfamilies corresponding to the two independent sets $\mc A$ and $\mc B$.  Hence $d(A,\mc B) \ge \sqrt{n}$ for every $A\in \mc A$, and $d(B,\mc A) \ge \sqrt{n}$ for every $B\in \mc B$.

Given this structure, our goal is to show that a relatively small number of sets in $\mc F$ are contained in enough comparable pairs to limit the number of $(3,2)$-colourings of $\mc F$.  To this end, we now define some notation we will use throughout this proof.

\begin{dfn}\label{def:three-color}
Fix a subfamily $\mc X\subseteq \mc B$ and a $(3,2)$-colouring $\vp \in \fr C$.  We write $\mc B_{\text{mc}}^{\vp}$ for the subfamily of $\mc B$ consisting of vertices whose neighbourhoods in $\mc A$ are monochromatic under $\vp$.  Let $\mc A_3=\{F\in\mc A:d(F,\mc X)=0\}$, $\mc A_{2,\vp}=\{F\in\mc A:d(F,\mc B_{\text{mc}}^{\vp})\ge \sqrt{n}, d(F,\mc X\cap \mc B_{\text{mc}}^{\vp})=0\}$ and $\mc A_{1,\vp}=\{F\in \mc A:d(F,\mc B_{\text{mc}}^{\vp})<\sqrt{n}\}$. Note that $\mc A_3$ is not necessarily disjoint from $\mc A_{2,\vp} \cup \mc A_{1,\vp}$.
\end{dfn}
The following observations follow straightforwardly from the above definition.  
\begin{obs}\label{obs:three-color}
The families $\mc B_{\text{mc}}^{\vp}$ and $\mc A_{1, \vp}$ are determined by the restriction of $\vp$ to $\mc A$. Moreover, $\mc A_3$ depends only on $\mc X$, while $\mc A_{2, \vp}$ depend only on $\mc B_{\text{mc}}^{\vp}$ and $\mc X$.  
\end{obs}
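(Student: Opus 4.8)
The plan is simply to unwind Definition~\ref{def:three-color} and record, for each of the four families involved, precisely which data its defining rule refers to. The point to keep in mind is that once the extremal family $\mc F$ has been fixed, so are the subfamilies $\mc I$, $\mc S$, $\mc A$, $\mc B$, the chosen subfamily $\mc X \subseteq \mc B$, and the comparability graph $G(\mc F)$; consequently every degree $d(\cdot,\cdot)$ and neighbourhood $N(\cdot,\cdot)$ appearing below is a fixed function of its set arguments, and the only object still free to vary is the colouring $\vp \in \fr C$. Each dependence claim therefore reduces to checking that the relevant defining formula mentions at most the asserted part of $\vp$ (or none of it at all).

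First I would treat $\mc B_{\text{mc}}^{\vp}$. For every $B \in \mc B$ the set $N(B,\mc A)$ whose colouring is tested in the definition is, by the convention of Section~\ref{subsec:orgnot}, a subfamily of $\mc A$ --- in fact, since the remaining subgraph of $G(\mc F)$ is bipartite with parts $\mc A$ and $\mc B$, it is the entire neighbourhood of $B$ within $\mc A \cup \mc B$. Hence whether $\vp$ is constant on $N(B,\mc A)$ is determined by the restriction $\vp|_{\mc A}$, and therefore so is $\mc B_{\text{mc}}^{\vp} = \{B \in \mc B : \vp \text{ is constant on } N(B,\mc A)\}$. Since $\mc A_{1,\vp} = \{F \in \mc A : d(F,\mc B_{\text{mc}}^{\vp}) < \sqrt{n}\}$ is obtained from $\mc B_{\text{mc}}^{\vp}$ using only the fixed graph $G(\mc F)$, it too is a function of $\vp|_{\mc A}$; this gives the first sentence of the Observation. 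The remaining claims are read off in the same way: $\mc A_3 = \{F \in \mc A : d(F,\mc X) = 0\}$ involves no colouring whatsoever, so it depends only on $\mc X$; and $\mc A_{2,\vp} = \{F \in \mc A : d(F,\mc B_{\text{mc}}^{\vp}) \ge \sqrt{n},\ d(F,\mc X \cap \mc B_{\text{mc}}^{\vp}) = 0\}$ is assembled purely from $\mc B_{\text{mc}}^{\vp}$, the fixed family $\mc X$, their intersection and degrees in $G(\mc F)$, so it is determined by the pair $(\mc B_{\text{mc}}^{\vp},\mc X)$.

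I do not expect any genuine obstacle: the statement is bookkeeping about which quantities each definition references. The only spot warranting a moment's care is the first step above, namely confirming that the neighbourhoods used to test monochromaticity really do lie inside $\mc A$ --- but this is immediate from the convention that $N(B,\mc A)$ denotes the neighbours of $B$ belonging to $\mc A$, so there is nothing to worry about.
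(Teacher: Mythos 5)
Your proposal is correct and matches the paper's treatment: the paper offers no separate argument, simply noting that the observation "follows straightforwardly from the above definition," and your unwinding of Definition~\ref{def:three-color} is exactly that routine check. Nothing is missing.
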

The following claim, to be proven at the end of this section, is the key of the proof.

\begin{claim}\label{claim:random-subset}
For sufficiently large $n$, there exist families $\mc X \subseteq \mc B$ and $\fr C'\subseteq \fr C$ such that
\begin{itemize}
	\item[\rm (i)] $\card{\mc X}\le 0.01\card{\mc B}$, 
	\item[\rm (ii)] $\card{\fr C'} \ge \frac{9}{10}\card{\fr C}$, and
	\item[\rm (iii)] $\max\{\card{\mc A_3},\card{\mc A_{2,\vp}}\}\le 0.01\card{\mc A}$ for all colourings $\vp$ in $\fr C'$. 
\end{itemize}
\end{claim}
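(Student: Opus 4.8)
The plan is to take $\mc X$ to be a random sparse subset of $\mc B$ and then to let $\fr C'$ consist of exactly those colourings $\vp$ for which $\mc A_{2,\vp}$ happens to be small. The reason this should work is that, by construction, every $A\in\mc A$ has at least $\sqrt n$ neighbours in $\mc B$, so a random sparse subset of $\mc B$ will intersect almost all of these neighbourhoods; this forces $\mc A_3$ to be small, and the identical argument applies to $N(A,\mc B)\cap\mc B_{\text{mc}}^{\vp}$ precisely when that set has size at least $\sqrt n$, which is the defining condition for $A$ to lie in $\mc A_{2,\vp}$. Before starting I would dispose of the trivial cases $\mc A=\emptyset$ or $\mc B=\emptyset$ (take $\mc X=\emptyset$ and $\fr C'=\fr C$), and record that double-counting the edges of $G(\mc F)$ between $\mc A$ and $\mc B$ together with the minimum-degree bound gives $\card{\mc A},\card{\mc B}\ge\sqrt n$.

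Concretely, I would fix $p=0.005$ and let $\mc X\subseteq\mc B$ be obtained by including each set of $\mc B$ independently with probability $p$. Since $\bE\card{\mc X}=p\card{\mc B}$ and $\card{\mc B}\ge\sqrt n$, the Chernoff bound gives $\bP[\card{\mc X}>0.01\card{\mc B}]\le e^{-p\sqrt n/3}$. For a fixed $A\in\mc A$ one has $\bP[A\in\mc A_3]=\bP[N(A,\mc B)\cap\mc X=\emptyset]\le(1-p)^{\sqrt n}\le e^{-p\sqrt n}$, so $\bE\card{\mc A_3}\le\card{\mc A}e^{-p\sqrt n}$ and Markov's inequality yields $\bP[\card{\mc A_3}>0.01\card{\mc A}]\le 100e^{-p\sqrt n}$. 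For a fixed colouring $\vp\in\fr C$ the family $\mc B_{\text{mc}}^{\vp}$ depends only on $\vp$ (Observation~\ref{obs:three-color}) and hence is independent of the randomness defining $\mc X$, so for every $A\in\mc A$ we get $\bP[A\in\mc A_{2,\vp}]\le(1-p)^{\sqrt n}\le e^{-p\sqrt n}$ --- this probability is $0$ when $d(A,\mc B_{\text{mc}}^{\vp})<\sqrt n$, and otherwise $\mc X$ must avoid at least $\sqrt n$ prescribed sets of $\mc B$ --- whence $\bE\card{\mc A_{2,\vp}}\le\card{\mc A}e^{-p\sqrt n}$.

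The main obstacle is that part (iii) must hold \emph{simultaneously} for every $\vp\in\fr C'$, and $\fr C$ could be exponentially large, so one cannot afford a union bound over colourings; the fix is to average. I would let $Z=Z(\mc X)$ be the number of colourings $\vp\in\fr C$ with $\card{\mc A_{2,\vp}}>0.01\card{\mc A}$, note the deterministic inequality $0.01\card{\mc A}\cdot Z\le\sum_{\vp\in\fr C}\card{\mc A_{2,\vp}}$, take expectations over $\mc X$ to obtain $\bE[Z]\le 100\card{\fr C}e^{-p\sqrt n}$, and conclude $\bP[Z>\tfrac1{10}\card{\fr C}]\le 1000e^{-p\sqrt n}$ by Markov. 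For $n$ large enough all three failure probabilities are below $\tfrac13$, so some outcome $\mc X$ avoids all three bad events; fixing such an $\mc X$ and setting $\fr C'=\{\vp\in\fr C:\card{\mc A_{2,\vp}}\le 0.01\card{\mc A}\}$ then gives (i) from $\card{\mc X}\le0.01\card{\mc B}$, (ii) from $\card{\fr C'}=\card{\fr C}-Z\ge\tfrac9{10}\card{\fr C}$, and (iii) because $\card{\mc A_3}\le0.01\card{\mc A}$ was secured while $\card{\mc A_{2,\vp}}\le0.01\card{\mc A}$ holds for every $\vp\in\fr C'$ by definition. In the full write-up the only care needed is in choosing the constants so that $p=0.005$ simultaneously keeps $\card{\mc X}$ small and $e^{-p\sqrt n}$ negligible, and in checking that $\mc B_{\text{mc}}^{\vp}$ really is determined by $\vp$ alone.
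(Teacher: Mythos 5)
Your proposal is correct and follows essentially the same route as the paper: a random sparse $\mc X\subseteq\mc B$, a Chernoff bound for $\card{\mc X}$, the first-moment bound $\bE\card{\mc A_3},\bE\card{\mc A_{2,\vp}}\le\card{\mc A}(1-p)^{\sqrt n}$ exploiting the minimum degree $\sqrt n$ and the independence of $\mc B_{\text{mc}}^{\vp}$ from $\mc X$, and the averaging-plus-Markov step to avoid a union bound over the exponentially many colourings. The only (immaterial) differences are your choice of a constant $p$ in place of the paper's $p=1/\log n$ and your treating $\mc A_3$ separately rather than bundling it with $\mc A_{2,\vp}$ into a single bad event.
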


Before continuing, let us remark that, by \eqref{good-colourings_lower-I} and Claim \ref{claim:random-subset} (ii), the size of $\fr C'$ is at least
\begin{equation}\label{good-colourings_lower-II}
\card{\fr C'} \ge \tfrac{9}{10}\card{\fr C}\ge\tfrac{9}{10}\cdot 3^{\binom{n}{\floor{n/2}}}.
\end{equation}

With $\mc X$ as in Claim~\ref{claim:random-subset}, we collect the following estimates, whose proofs are also deferred to the end of this section.

\begin{claim}\label{claim:sperner-type} 
For $n$ sufficiently large, the following inequalities hold:
\begin{itemize} 
\item[\rm(a)] $\max\{|\mc A|,\card{\mc B}\} \le \binom{n}{\floor{n/2}}-\card{\mc I}$,
\item[\rm(b)] $\card{\mc S}\le 1.01\left(\binom{n}{\floor{n/2}}-\card{\mc I}\right)$, and
\item[\rm(c)] $\card{\mc A_{2,\vp}\cup \mc A_{1,\vp}}+\card{\mc B_{\text{mc}}^{\vp}} \le 1.02\left(\binom{n}{\floor{n/2}}-\card{\mc I}\right)-\card{\mc S}$ for every $\vp \in \fr C'$.
\end{itemize}
\end{claim}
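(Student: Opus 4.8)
The plan is to prove the three parts in order, since part (c) will build on (a) and (b). For part (a), the observation is that $\mc A$ and $\mc I$ are disjoint subfamilies of $\mc F$, and crucially $\mc A \cup \mc I$ is an antichain: every set in $\mc A$ is comparable (within $\mc F$) only to sets in $\mc B$ (since $G(\mc F)$ is bipartite with parts $\mc A\cup\mc S\cup(\text{something})$ and $\mc B\cup\ldots$ — more precisely $\mc A$ and $\mc B$ form the two sides of a bipartite graph, so there are no comparabilities within $\mc A$), and every set in $\mc I$ is incomparable to everything. Hence $\mc A\cup\mc I$ contains no $2$-chain, and Sperner's Theorem gives $\card{\mc A}+\card{\mc I}\le\binom{n}{\floor{n/2}}$, i.e. $\card{\mc A}\le\binom{n}{\floor{n/2}}-\card{\mc I}$; the same argument with $\mc B$ in place of $\mc A$ finishes (a).

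For part (b), I would use the supersaturation version of Sperner's Theorem: a family in $2^{[n]}$ of size $\binom{n}{\floor{n/2}}+t$ contains at least $t$ (in fact $\sim t\sqrt{n}$, but we only need a weak bound) disjoint comparable pairs, or more conveniently, there is a bound of the form ``any family of size $(1+\gamma)\binom{n}{\floor{n/2}}$ spans many $2$-chains.'' Recall each set of $\mc S$ was removed because it had degree at most $\sqrt n$ in the then-current graph, and the sets of $\mc A\cup\mc B$ have degree at least $\sqrt n$. I would bound the total number of comparable pairs in $\mc F\setminus\mc I$ from above: using supersaturation, a family of size $N=\card{\mc F}-\card{\mc I}$ has at most some function of $N-\binom{n}{\floor{n/2}}$ comparable pairs only if we have an \emph{upper} bound on the number of chains — but actually we want the reverse. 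The cleaner route: observe that $\mc F\setminus\mc I$ is $3$-chain-free, so its comparability graph is bipartite (triangle-free suffices for the bound we need), hence the number of edges is at most $\frac14(\card{\mc F}-\card{\mc I})^2$ — but that is far too weak. Instead I would argue directly: when we remove a sparse set it kills at most $\sqrt n$ edges, and the family $\mc F\setminus\mc I$ as a whole, being $3$-chain-free, has at most $O(\card{\mc F}\sqrt n)$ comparable pairs in total is \emph{not} automatic either. The right tool is the lower bound of a Kleitman-type / supersaturation result: if $\card{\mc S}>1.01(\binom{n}{\floor{n/2}}-\card{\mc I})$ then $\mc S\cup\mc I$, which is a subfamily of $\mc F$ of size exceeding $2.01(\binom{n}{\floor{n/2}}-\card{\mc I})+\card{\mc I}$, must contain more than $\Omega(\card{\mc I}\cdot(\text{stuff}))$... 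Let me instead state it the way the authors likely do: each set of $\mc S$ lies in a comparable pair with another set of $\mc F\setminus\mc I$, but since $\mc F\setminus\mc I$ is $3$-chain-free, one can greedily find a large matching of comparable pairs inside $\mc S$ together with its neighbours, contradicting the fact that $\mc F$ (minus $\mc I$) cannot be too much larger than a maximum antichain. Quantitatively, I would invoke supersaturation: a family of size $\binom{n}{\floor{n/2}}+s$ in $2^{[n]}$ contains at least $s$ pairwise disjoint comparable pairs; applying this inside $\mc F\setminus\mc I$ (of size $N$) together with the $\sqrt n$-degree structure forces $N\le 2.01(\binom{n}{\floor{n/2}}-\card{\mc I})$ and hence $\card{\mc S}=N-\card{\mc A}-\card{\mc B}\le N \le 1.01(\binom{n}{\floor{n/2}}-\card{\mc I})$ after using (a). The numerical constants $0.01$, $1.01$, $1.02$ are chosen with slack precisely so that $n$ large (and the earlier $\card{\mc X}\le 0.01\card{\mc B}$ from Claim~\ref{claim:random-subset}) absorbs the error terms.

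For part (c), fix $\vp\in\fr C'$. The sets $\mc A_{1,\vp}\cup\mc A_{2,\vp}$, together with the ``non-monochromatic'' part $\mc B\setminus\mc B_{\text{mc}}^{\vp}$ — no; the right combination is the following: I claim $\mc A_{1,\vp}\cup\mc A_{2,\vp}\cup\mc B_{\text{mc}}^{\vp}$ is (close to) $3$-chain-free, or better, that $(\mc A_{1,\vp}\cup\mc A_{2,\vp}\cup\mc B_{\text{mc}}^{\vp})\cup\mc I$ is $3$-chain-free, so by Erd\H{o}s's theorem (the $k=3$ case) it has size at most $m_2=\binom{n}{\floor{n/2}}+\binom{n}{\floor{n/2}+1}\le 2\binom{n}{\floor{n/2}}$... but we want the bound $1.02(\binom{n}{\floor{n/2}}-\card{\mc I})-\card{\mc S}$, which is \emph{smaller} than $2\binom{n}{\floor{n/2}}$ and involves $-\card{\mc S}$, so this direct approach over-counts. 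The correct accounting is: by (a) and (b), $\card{\mc A}+\card{\mc B}+\card{\mc S}+\card{\mc I}=\card{\mc F}\le 2.02(\binom{n}{\floor{n/2}}-\card{\mc I})+\card{\mc I}$ (combining the size estimates), hence $\card{\mc A_{1,\vp}\cup\mc A_{2,\vp}}+\card{\mc B_{\text{mc}}^{\vp}}\le\card{\mc A}+\card{\mc B}\le 2.02(\binom{n}{\floor{n/2}}-\card{\mc I})-\card{\mc S}-\card{\mc I}$, wait, this gives $2.02$ not $1.02$. To get down to $1.02$ I expect one must use that $\mc A_{1,\vp}\cup\mc A_{2,\vp}\cup\mc B_{\text{mc}}^{\vp}$ is itself an antichain (not just $3$-chain-free!): indeed sets in $\mc A$ and in $\mc B$ can be comparable, but the specific sets in $\mc B_{\text{mc}}^{\vp}$ have \emph{monochromatic} neighbourhoods in $\mc A$, and a set $A\in\mc A_{2,\vp}$ by definition has $d(A,\mc X\cap\mc B_{\text{mc}}^{\vp})=0$, while $A\in\mc A_{1,\vp}$ has few monochromatic neighbours — I would chase through exactly which comparabilities can survive and show that after discarding a negligible fraction the rest is an antichain, so Sperner gives $\le\binom{n}{\floor{n/2}}$, and then the $\card{\mc I}$ and $\card{\mc S}$ bookkeeping (sets counted here are disjoint from $\mc I$ and from $\mc S$, and $\mc I$ together with an antichain is still an antichain) upgrades this to $\le\binom{n}{\floor{n/2}}-\card{\mc I}$ and then, reinserting the slack, $\le 1.02(\binom{n}{\floor{n/2}}-\card{\mc I})-\card{\mc S}$ using that $\card{\mc S}$ sets were removed from a structure that, with them, was still essentially an antichain of size at most $1.02(\binom{n}{\floor{n/2}}-\card{\mc I})$. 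The main obstacle is precisely this last step: pinning down the exact near-antichain structure of $\mc A_{1,\vp}\cup\mc A_{2,\vp}\cup\mc B_{\text{mc}}^{\vp}$ from the colouring-defined conditions in Definition~\ref{def:three-color}, and verifying that the small exceptional sets one must delete are genuinely a $o(1)$-fraction — this is where the hypotheses $\card{\mc X}\le 0.01\card{\mc B}$ and $\max\{\card{\mc A_3},\card{\mc A_{2,\vp}}\}\le 0.01\card{\mc A}$ from Claim~\ref{claim:random-subset} get used, and where the careful choice of constants matters.
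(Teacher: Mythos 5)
Part (a) of your proposal is correct and is exactly the paper's argument: $\mc I\cup\mc A$ and $\mc I\cup\mc B$ are antichains, so Sperner's Theorem gives the bound. Parts (b) and (c), however, have genuine gaps, and in both cases the missing ingredient is the same: you need Kleitman's supersaturation theorem in its \emph{strong} quantitative form --- a subfamily of $2^{[n]}$ with $\binom{n}{\floor{n/2}}+t$ sets contains at least $\ceil{\tfrac{n+1}{2}}t$ comparable pairs --- and you need to apply it to the right families. Your stated version (``at least $t$ disjoint comparable pairs, and a weak bound suffices'') is not enough: the only upper bound available on comparable pairs involving $\mc S$ is $\card{\mc S}\sqrt{n}$ (each set of $\mc S$ had degree at most $\sqrt n$ when removed), so if each excess set only forced $O(1)$ or even $O(\sqrt n)$ comparable pairs, the two bounds would not contradict each other and you would conclude nothing. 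With Kleitman's $\Theta(n)$ pairs per excess set, applying the theorem to $\mc I\cup\mc S$ (note that $\mc I$ must be included to produce the $-\card{\mc I}$ on the right-hand side) gives $\card{\mc I}+\card{\mc S}\le\binom{n}{\floor{n/2}}+\tfrac{2}{\sqrt n}\card{\mc S}$, which rearranges to (b). Your chain of inequalities ending in ``$N\le 2.01(\cdots)$ and hence $\card{\mc S}\le N\le 1.01(\cdots)$'' is internally inconsistent and does not recover this.

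For (c) you correctly sense that the comparabilities between $\mc A_{1,\vp}$ and $\mc B_{\text{mc}}^{\vp}$ are the crux, but the route you sketch (showing $\mc A_{1,\vp}\cup\mc A_{2,\vp}\cup\mc B_{\text{mc}}^{\vp}$ is a near-antichain and invoking Sperner) is not completed, and you flag it yourself as the unresolved obstacle. The paper's argument is a second application of Kleitman, this time to $\mc F'=\mc I\cup\mc S\cup\mc A_{1,\vp}\cup\mc B_{\text{mc}}^{\vp}$, \emph{excluding} $\mc A_{2,\vp}$: the edges of $G(\mc F')$ incident to $\mc S$ number at most $\card{\mc S}\sqrt n$, each set of $\mc A_{1,\vp}$ has by definition fewer than $\sqrt n$ neighbours in $\mc B_{\text{mc}}^{\vp}$, and there are no other edges ($\mc A,\mc B$ are independent sets and $\mc I$ is isolated), so Kleitman yields $\card{\mc S}+\card{\mc A_{1,\vp}}+\card{\mc B_{\text{mc}}^{\vp}}\le 1.01\bigl(\binom{n}{\floor{n/2}}-\card{\mc I}\bigr)$. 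The family $\mc A_{2,\vp}$ is then handled separately and trivially via Claim~\ref{claim:random-subset}(iii) together with part (a), contributing the extra $0.01\bigl(\binom{n}{\floor{n/2}}-\card{\mc I}\bigr)$. Without this split --- Kleitman on $\mc I\cup\mc S\cup\mc A_{1,\vp}\cup\mc B_{\text{mc}}^{\vp}$ plus the $0.01\card{\mc A}$ bound on $\mc A_{2,\vp}$ --- your accounting does not close, as your own computation producing $2.02$ instead of $1.02$ shows.
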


Assuming Claims~\ref{claim:random-subset} and~\ref{claim:sperner-type}, we shall provide the desired upper bound on $\card{\fr C}$.  Indeed, we first appeal to Claim~\ref{claim:random-subset} to obtain a subset of vertices $\mc X \subseteq\mc B$ and a subfamily of colourings $\fr C'\subseteq \fr C$ with properties (i)--(iii).

We now show that for an arbitrary colouring $\vp$ in $\fr C'$, it is possible to reveal $\vp$ gradually by asking a small number of questions with a bounded number of possible answers. This will show that, unless $\mc F$ is a middle level of the Boolean lattice, $\card{\fr C'}$ is smaller than the lower bound in~\eqref{good-colourings_lower-II}, thus proving that the middle levels are the only families with the maximum possible number of $(3,2)$-colourings.

First, for every $x\in \mc X$, we ask for $\vp(x)$, and also whether its neighbourhood in $\mc A$ is monochromatic under $\vp$ and, if it is, which colour it has. Because there are three options for $\vp(x)$ and two colours may be assigned to the monochromatic neighbourhood of $x$, the number of possible answers for each vertex $x\in \mc X$ is at most $3+3\cdot 2=9$. So by Claims~\ref{claim:random-subset} (i) and~\ref{claim:sperner-type} (a), it follows that the total number of answers at this stage is not greater than $9^{\card{\mc X}} \le 3^{0.02\card{\mc B}} \le 3^{0.02\left(\binom{n}{\floor{n/2}}-\card{\mc I}\right)}$.
Given the answers for the sets in $\mc X$, we shall bound the number of possibilities to finish the colouring of $\mc A \cup \mc B$. 

The answers give rise to a unique partition $\mc A=\mc A_1\cup\mc A_2\cup\mc A_3$, where $\mc A_1$ is the union of monochromatic neighbourhoods of vertices in $\mc X$, and $\mc A_3$ consists of those sets in $\mc A$ without a neighbour in $\mc X$ (see Definition~\ref{def:three-color} and Observation~\ref{obs:three-color}). Note that each set in $\mc A_1$ has its colour determined. Since any set with a neighbour in $\mc X$ has at most two colours left (as it cannot have the same colour as its neighbour), only those sets in $\mc A_3$ could still have three colours available. As $|\mc A_3| \le 0.01 \left(\binom{n}{\floor{n/2}}-|\mc I| \right)$ by Claims~\ref{claim:random-subset} (iii) and \ref{claim:sperner-type} (a), we see that there are at most $3^{\card{\mc A_3}}2^{\card{\mc A_2}} \le 3^{0.01 \left(\binom{n}{\floor{n/2}}-|\mc I| \right)}2^{|\mc A_2|}$
ways for the sets in $\mc A$ to be coloured.

After specifying the restriction of $\vp$ to $\mc A$, we can identify the families $\mc B_{\text{mc}}^{\vp}, \mc A_{2,\vp}$ and $\mc A_{1,\vp}$ (see Definition \ref{def:three-color} and Observation~\ref{obs:three-color}). Because $\mc B_{\text{mc}}^{\vp}$ consists of all sets in $\mc B$ whose neighbourhoods in $\mc A$ are monochromatic, each sets in $\mc B_{\text{mc}}^{\vp}$ can receive at most $2$ colours, while sets in $\mc B\setminus(\mc X\cup\mc B_{\text{mc}}^{\vp})$ only have one colour available. It follows that the number of possibilities of extending the colouring to $\mc B$ is at most $2^{\card{\mc B_{\text{mc}}^{\vp}}}$. 

Note that $\mc A_2\subseteq \mc A_{2,\vp}\cup\mc A_{1,\vp}$, because every element $F\in \mc A_2\setminus \mc A_{1,\vp}$ has at least $\sqrt{n}$ neighbours in $\mc B_{\text{mc}}^{\vp}$ (as $F \notin \mc A_{1,\vp}$) and no neighbours in $\mc X\cap \mc B_{\text{mc}}^{\vp}$ (otherwise the colour of $F$ is already determined). Claim~\ref{claim:sperner-type} (c) thus force $\card{\mc A_2}+\card{\mc B_{\text{mc}}^{\vp}}\le 1.02\left( \binom{n}{\floor{n/2}}-\card{\mc I}\right)-\card{\mc S}$. Therefore, given the answers for all $x\in \mc X$, the number of ways to colour $\mc A\cup\mc B$ is at most $3^{0.01 \left(\binom{n}{\floor{n/2}}-|\mc I| \right)}2^{|\mc A_2|}2^{\card{\mc B_{{\text{mc}}}^{\vp}}} \le \left(3^{0.01}2^{1.02}\right)^{ \binom{n}{\floor{n/2}}-\card{\mc I}}2^{-\card{\mc S}}$.

We proceed to bound the number of ways to extend a given colouring of $\mc A \cup \mc B$ to a colouring of $\mc F$. To this end, let $\{\{F_1,G_1\},\ldots,\{F_t,G_t\}\}$ be a maximal matching in the comparability graph $G(\mc S)$. Since $F_i$ and $G_i$ are comparable for each $1 \le i \le t$, there are at most $6$ ways to colour each pair $\{F_i,G_i\}$, and hence the number of possible colourings for the matching is at most $6^t$. On the other hand, it follows from the definition of $\mc I$ and the maximality of the matching that every set $F$ in $\mc S$ not in the matching is adjacent to some previously-coloured set in $\mc A\cup\mc B\cup\{F_1,G_1,\ldots,F_s,G_s\}$, and therefore has at most two available colours. The family $\mc S$ can therefore be coloured in at most $6^{t}2^{\card{\mc S}-2t}$ possible ways, which is at most $\sqrt{6}^{\card{\mc S}}$ since $t\le \card{\mc S}/2$. Finally, the number of ways to colour $\mc I$ is $3^{\card{\mc I}}$.

Putting these inequalities together, we can bound the number of colourings in $\fr C'$ as follows
\begin{align}\label{good-colourings_upper}
\notag \card{\fr C'} &\le 3^{0.02\left(\binom{n}{\floor{n/2}}-\card{\mc I}\right)}\cdot \left(3^{0.01}2^{1.02}\right)^{ \binom{n}{\floor{n/2}}-\card{\mc I}}2^{-\card{\mc S}}\cdot \sqrt{6}^{\card{\mc S}}3^{\card{\mc I}}\\
\notag & = 3^{\binom{n}{\floor{n/2}}}\left(3^{-0.97}\cdot 2^{1.02}\right)^{\binom{n}{\floor{n/2}}-\card{\mc I}}\left(\sqrt{6} / 2 \right)^{\card{\mc S}}\\
\notag & \le 3^{\binom{n}{\floor{n/2}}}\left(3^{-0.97}\cdot 2^{1.02}\right)^{\binom{n}{\floor{n/2}}-\card{\mc I}}\left(\sqrt{6} / 2 \right)^{1.01\left(\binom{n}{\floor{n/2}}-\card{\mc I}\right)}\\
&\le 3^{\binom{n}{\floor{n/2}}}0.86^{\binom{n}{\floor{n/2}}-\card{\mc I}},
\end{align}
where the second inequality follows from Claim~\ref{claim:sperner-type} (b), and the last holds since $\card{\mc I}\le \binom{n}{\floor{n/2}}$ due to Sperner's Theorem.

From \eqref{good-colourings_lower-II} and \eqref{good-colourings_upper}, we find that $\binom{n}{\floor{n/2}}-\card{\mc I}\le 0$.  Hence $\mc I$ must be an antichain of size $\binom{n}{\floor{n/2}}$, and therefore one of the middle levels of $2^{[n]}$. Since $\mc I$ is the set of isolated vertices in the comparability graph $G(\mc F)$, and any other set is comparable to some sets in the middle levels, this forces $\mc F=\mc I$, completing the proof.
\end{proof}

It remains to prove Claims~\ref{claim:random-subset} and~\ref{claim:sperner-type}, a task we now begin.

\begin{proof}[Proof of Claim \ref{claim:random-subset}]
If $\mc A=\emptyset$, then $\mc X=\emptyset$ and $\fr C= \fr C'$ trivially have the desired properties.  We thus assume $\mc A \ne \emptyset$. By the definition of $\mc A$ and $\mc B$, we must have $\card{\mc B}\ge \sqrt{n}$. Observe that for every colouring $\vp$, the set $\mc B_{\text{mc}}^{\vp}$ is well-defined and independent of the random choice of $\mc X$ that we shall now make. Let $\mc X\sim (\mc B)_p$ denote the random subfamily of $\mc B$, where each set in $\mc B$ is included independently with probability $p=1/\log n$. Assuming $n$ is sufficiently large and applying the Chernoff bound (see, e.g., \cite[Corollary A.1.14]{AS16}) we have 

\begin{equation}\label{chernoff-size}
\bP(\card{\mc X} > 0.01 \card{\mc B}) \le \exp\left(-n^{1/3}\right).
\end{equation}

For a $(3,2)$-colouring $\vp$ of $\mc F$, let $E_{\vp}$ be the event that $\card{\mc A_{2,\vp}\cup\mc A_3}>0.01\card{\mc A}$. We will show it is unlikely that this occurs for many colourings simultaneously.  More precisely,
\begin{equation}\label{bad-event}
\bP\left(\card{\{\vp\in \fr C: E_{\vp}\}} > \frac{1}{10}\card{\fr C}\right) \le 1000 \exp\left(- n^{1/3} \right).
\end{equation}
Clearly, \eqref{chernoff-size} and \eqref{bad-event} together imply the existence of $\mc X\subseteq \mc B$ and $\fr C'\subseteq \fr C$ with the desired properties. 

It thus remains to show that \eqref{bad-event} holds. As every element in $\mc A_3$ has at least $\sqrt{n}$ neighbours in $\mc B$, none of which are selected in the random subfamily $\mc X$, the union bound gives
\[
\bE[\card{\mc A_3}] \le \card{\mc A}(1-p)^{\sqrt{n}} \le \card{\mc A} \exp \left(-p\sqrt{n} \right).
\]
Similarly, since $\mc A_{2,\vp}$ has $\sqrt{n}$ neighbours in $\mc B_{\text{mc}}^{\vp}\setminus {\mc X}$, we find
\[
\bE[\card{\mc A_{2,\vp}}] \le \card{\mc A}(1-p)^{\sqrt{n}} \le \card{\mc A} \exp \left(-p\sqrt{n} \right).
\]
Combining these bounds with Markov's inequality, we obtain
\[
\bP(E_{\vp})=\bP(\card{\mc A_{2,\vp}\cup\mc A_3}>0.01\card{\mc A}) \le \frac{2\card{\mc A} \exp \left(-p\sqrt{n}\right)}{0.01\card{\mc A}}\le 100 \exp \left( - n^{1/3} \right).
\] 
Linearity of expectation then gives $\bE\left(\card{\{\vp \in \fr C: E_{\vp}\}}\right)=\sum_{\vp \in \fr C}\bP(E_{\vp}) \le 100 \exp \left(- n^{1/3} \right) \card{\fr C}$, and hence another application of Markov's inequality implies
\[
\bP\left(\card{\{\vp\in \fr C: E_{\vp}\}} > \frac{1}{10}\card{\fr C}\right)\le \frac{100 \exp \left(-n^{1/3} \right) \card{\fr C}}{\card{\fr C} / 10}=1000 \exp\left(- n^{1/3}\right).
\]
This finishes our proof of Claim~\ref{claim:random-subset}.
\end{proof}	

We conclude this section with a proof of Claim~\ref{claim:sperner-type}, for which we shall use the following special case of a result of Kleitman~\cite{Kle66}.  This provides a lower bound on the number of comparable pairs in a large set family of a given size (see also~\cite{DGS15}, which characterises the extremal families).

\begin{theorem}[Kleitman~\cite{Kle66}] \label{thm:Kleitman}
A subfamily of $2^{[n]}$ with $\binom{n}{\floor{n/2}}+t$ sets must contain at least $\ceil{\frac{n+1}{2}} t$ comparable pairs.
\end{theorem}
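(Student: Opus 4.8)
The plan is to pin down the minimising configurations and then verify the bound on them, after reducing an arbitrary family to such a configuration. Write $p(\mc F)$ for the number of comparable pairs in $\mc F$. The families one expects to minimise $p$ among all families of size $\binom{n}{\floor{n/2}}+t$ are the \emph{centred} ones: a full middle layer $\binom{[n]}{\floor{n/2}}$ together with $t$ arbitrary sets from an adjacent layer $\binom{[n]}{\floor{n/2}+1}$ (for odd $n$ one may instead picture the two middle layers with $t$ sets of one of them). In such a family the $t$ extra sets form an antichain among themselves, and each of them contains exactly $\binom{\floor{n/2}+1}{\floor{n/2}}=\floor{n/2}+1=\ceil{(n+1)/2}$ sets of the middle layer, so $p(\mc F)=\ceil{(n+1)/2}\,t$ exactly. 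One also checks that once a whole adjacent layer has been absorbed and further sets are taken from layers two or more steps from the centre, each new set is forced into strictly more comparabilities, so no family ever beats $\ceil{(n+1)/2}\,t$; this reduces everything to the range $0\le t\le\binom{n}{\floor{n/2}+1}$.

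The substance of the proof is a compression lemma: every family can be transformed, without increasing $p$, into a centred family of the same size. I would prove this by repeatedly applying ``push towards the middle'' moves. If $\mc F$ meets a layer $\binom{[n]}{a}$ with $a<\floor{n/2}$ while $\binom{[n]}{a+1}\not\subseteq\mc F$ (symmetrically in the upper half), set $\mc A=\mc F\cap\binom{[n]}{a}$ and move a suitable subfamily of $\mc A$ up into $\binom{[n]}{a+1}\setminus\mc F$. The deleted sets lose their comparabilities with everything of $\mc F$ above level $a$, while the inserted sets gain comparabilities with everything of $\mc F$ below level $a+1$; the point is that, because layers further from the centre are smaller and shadows expand — the normalised matching property of the Boolean lattice, a consequence of the Kruskal--Katona theorem — one can choose the moved subfamily and its image so that the net change is $\le 0$. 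Taking the image to be a colex-initial segment keeps the relevant shadows under control, and every move strictly decreases $\sum_{F\in\mc F}\bigl|\,|F|-\tfrac n2\,\bigr|$, so the process terminates at a centred family, whereupon the computation of the first paragraph finishes the job.

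The main obstacle is the bookkeeping in a single compression move: a chunk of $\mc A$ pushed from level $a$ to level $a+1$ trades comparabilities with \emph{all} higher occupied layers for comparabilities with \emph{all} lower occupied layers, so the inequality to be verified concerns the whole size profile of $\mc F$, not just two adjacent layers. Making this balance come out right seems to require either compressing all layers simultaneously to colex-initial segments first (so that Kruskal--Katona simultaneously controls every layer-to-layer count), or introducing an auxiliary weighting of the layers that makes the running estimate telescope. A cleaner, though less self-contained, alternative is induction on $n$: split $\mc F$ by membership of the element $n$ into $\mc F=\mc F_0\sqcup\mc F_1$ with $\mc F_0,\mc F_1\subseteq 2^{[n-1]}$, observe $p(\mc F)=p(\mc F_0)+p(\mc F_1)+c(\mc F_0,\mc F_1)$ where $c$ counts the pairs $A\subseteq B$ with $A\in\mc F_0$ and $B\in\mc F_1$, and bound the cross term $c$ from below by a defect form of Hall's theorem / the normalised matching property. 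The parity of $n$ enters precisely through the gap between $\binom{n}{\floor{n/2}}$ and $2\binom{n-1}{\floor{(n-1)/2}}$, and absorbing this gap into the cross term is the delicate point — notably, the crude bound that only counts comparable pairs \emph{inside} the chains of a symmetric chain decomposition is far too weak here, since an off-centre extra set meets its whole middle layer but only one set of its own chain.
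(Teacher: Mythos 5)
The paper does not prove this statement: it is imported verbatim from Kleitman~\cite{Kle66} (with \cite{DGS15} cited for the characterisation of the extremal families), so there is no in-paper argument to measure yours against, and your proposal has to stand on its own. It does not: it is a plan whose central step is left open. Your first paragraph is fine as far as it goes --- a full middle layer plus $t$ sets of an adjacent layer has exactly $\ceil{\frac{n+1}{2}}t$ comparable pairs, since each added set of size $\floor{n/2}+1$ contains $\floor{n/2}+1=\ceil{\frac{n+1}{2}}$ sets of the layer below --- but this only verifies tightness of the bound on one construction; the parenthetical claim that families drawing on farther layers ``never beat'' it already presupposes the reduction to centred families, which is the whole theorem.

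That reduction is exactly what you do not prove. The compression lemma of your second paragraph --- that a ``push towards the middle'' move can always be chosen so as not to increase the number of comparable pairs --- is the entire mathematical content of Kleitman's theorem, and your own third paragraph concedes that making ``this balance come out right'' is the main obstacle and only ``seems to require'' one of two further ideas. The difficulty is real, not bookkeeping: a set moved from level $a$ to level $a+1$ trades comparabilities with supersets and subsets across the whole size profile of $\mc F$, and the normalised matching property controls only adjacent-layer degrees, so it does not by itself yield a nonpositive net change (indeed, naive single-set pushes can increase the count). The fallback induction on $n$ has the same status: the lower bound you would need on the cross term $c(\mc F_0,\mc F_1)$, with the exact constant $\ceil{\frac{n+1}{2}}$ and with the parity defect between $\binom{n}{\floor{n/2}}$ and $2\binom{n-1}{\floor{(n-1)/2}}$ absorbed, is the theorem in disguise. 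You correctly rule out the symmetric-chain-decomposition shortcut (it gives only $t$ comparable pairs, not $\ceil{\frac{n+1}{2}}t$), but nothing is put in its place. As it stands the proposal identifies the extremal configuration and two plausible strategies, and proves neither.
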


\begin{proof}[Proof of Claim \ref{claim:sperner-type}]
Property (a) follows immediately from Sperner's Theorem after noting that $\mc I\cup \mc A$ and $\mc I\cup \mc B$ are independent sets in $G(\mc F)$, and therefore antichains.

Since $\mc I$ consists of isolated sets in $G(\mc F)$, and $\mc S$ was formed by successively removing sets of degree at most $\sqrt{n}$, it follows that the comparability graph $G(\mc I \cup \mc S)$ has at most $\card{\mc S} \sqrt{n}$ edges. By Theorem~\ref{thm:Kleitman}, this forces $\card{\mc I\cup \mc S}\le \binom{n}{\floor{n/2}}+\frac{2}{\sqrt{n}}\card{\mc S}$, and consequently, one has $\card{\mc S} \le 1.01\left(\binom{n}{\floor{n/2}}-\card{\mc I}\right)$ for $n$ sufficiently large, establishing (b).

We finally prove (c).  Let $\mc F' = \mc I \cup \mc S \cup \mc A_{1, \vp} \cup \mc B_{\text{mc}}^{\vp}$.  By the definition of $\mc S$, there are at most $\card{\mc S} \sqrt{n}$ edges incident to $\mc S$ in $G(\mc F')$. Moreover, every vertex from $\mc A_{1,\vp}$ is incident to at most $\sqrt{n}$ vertices from $\mc B_{\text{mc}}^{\vp}$.  As $\mc A$ and $\mc B$ are independent sets, and $\mc I$ is the set of isolated vertices, there are no other edges in $G(\mc F')$. In total, $G(\mc F')$ has at most $\left(\card{\mc S}+\card{\mc A_{1,\vp}}\right) \sqrt{n}$ edges. We thus get
$\card{\mc F'} \le  \binom{n}{\floor{n/2}}+\frac{2}{\sqrt{n}}\left(\card{\mc S}+\card{\mc A_{1,\vp}}\right)$,
by applying Theorem~\ref{thm:Kleitman}. This implies 
\begin{align*}
\binom{n}{\floor{n/2}}-|\mc I| &\ge \left(1-\frac{2}{\sqrt{n}}\right)\left(\card{\mc S}+\card{\mc A_{1,\vp}}\right)+\card{\mc B_{\text{mc}}^{\vp}}\\
&\ge 0.999 \left(\card{\mc S}+\card{\mc A_{1,\vp}}+ \card{\mc B_{\text{mc}}^{\vp}}\right),
\end{align*}
giving 
$\card{\mc S}+\card{\mc A_{1, \vp}} + \card{\mc B_{\text{mc}}^{\vp}} \le 1.01 \left( \binom{n}{\floor{n/2}} - \card{\mc I} \right)$. Moreover, we learn from Claims~\ref{claim:random-subset} (iii) and~\ref{claim:sperner-type} (a) that $\card{\mc A_{2,\vp}}\le 0.01\card{\mc A} \le 0.01\left(\binom{n}{\floor{n/2}}-\card{\mc I}\right)$. Combining these inequalities, we obtain
\[
\card{\mc A_{2,\vp}}+\card{\mc A_{1,\vp}}+\card{\mc B_{\text{mc}}^{\vp}} \le 1.02\left(\binom{n}{\floor{n/2}}-\card{\mc I}\right) - \card{\mc S},
\]
completing the proof.
\end{proof}

\section{Two-colourings without monochromatic \texorpdfstring{$k$}{k}-chains}\label{sec:2-colours}

In this section we prove Theorem~\ref{thm:2-colours}, which we first restate below.

\twocolours*

Recall that this shows the lower bound in~\eqref{ineq:kchainLB} is tight.  The largest $k$-chain-free families, by a result of Erd\H{o}s~\cite{Erd45}, consist of the $k-1$ largest levels of the Boolean lattice, and therefore have size $m_{k-1} = \sum_{i=\floor{(n-k+2)/2}}^{\floor{(n+k-2)/2}} \binom{n}{i}$.  Any larger family would have to contain $k$-chains, and we need to show that these chains place too many restrictions to allow for a larger number of $(2,k)$-colourings.

Just as we did in the proof of Theorem~\ref{thm:3-colours}, we shall do this by partitioning any candidate family in such a way that enables us to bound the number of $(2,k)$-colourings effectively.  However, in this instance, the partition we use is much more complex, and we describe it in the following proposition.  For convenience, we shall use the parameters $\eps = 1/(500k^2)$ and $\omega = 4k \log(1/\eps) / \eps$.

\begin{prop}\label{prop:kchainstructure}
Let $2 \le k \in \bN$, $\eps=1/(500k^2)$ and $\omega = 4k \log(1/\eps) / \eps$.
If $\mc F \subseteq 2^{[n]}$ is a family with at least one $(2,k)$-colouring, then $\mc F$ admits a partition
\[ \mc F = \mc A \cup \mc U \cup \mc D \cup \mc P \cup \mc R, \]
where each part is further partitioned into $k-1$ subparts (e.g. $\mc A = \bigcup_{i=1}^{k-1} \mc A_i$, and similarly for $\mc U, \mc D, \mc P$ and $\mc R$), that satisfies the following properties.
\begin{itemize}
	\item[(P1)] $\mc F$ has at most $2^{\card{\mc A} + \card{\mc U} + \card{\mc D} + \eps \card{\mc R}} 3^{\frac12 \card{\mc P}}$ $(2,k)$-colourings.
	\item[(P2)] For $1 \le i \le k-1$, $\mc A_i$ is an antichain.
	\item[(P3)] If $1 \le i \le k-1$ and $F$ is a set in one of $\mc U_i, \mc U_{i+1}, \mc D_i, \mc D_{i-1}, \mc P_i$ or $\mc R_i$ (where we take $\mc U_k = \mc D_0 = \emptyset$), then $F$ is comparable to at most $2 \omega$ sets in $\mc A_i$.  Moreover, $\mc U_1 = \mc D_{k-1} = \emptyset$.
	\item[(P4)] For each $\mc B_i \in \{ \mc U_i \cup \mc D_i \cup \mc P_i, \mc U_i \cup \mc D_{i-1}, \mc D_i \cup \mc U_{i+1} \}$ (where, again, $\mc U_k = \mc D_0 = \emptyset$), the family $\mc A_i \cup \mc B_i$ has at most $3 \omega \card{\mc B_i}$ comparable pairs.
	\item[(P5)] For any subfamily $\mc H \subseteq \mc F$, there is an antichain $\mc H' \subseteq \mc H$ of size $\card{\mc H'} \ge \card{ \mc H} / (2k-2)$.
\end{itemize}
\end{prop}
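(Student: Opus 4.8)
The plan is to build the partition greedily, peeling off one "level" at a time, and to use the colourings themselves to decide which sets are ``heavy'' (comparable to many sets below them that are forced to alternate colours). First I would fix a $(2,k)$-colouring $\vp_0$ of $\mc F$ (which exists by hypothesis) and use it as a reference colouring throughout. The idea is that in any monochromatic-$k$-chain-free $2$-colouring, a $k$-chain must use both colours, so chains impose alternation constraints; the sets that lie at the top of many such chains are the ones we want to isolate into $\mc U, \mc D, \mc P, \mc R$, while the ``bulk'' of each of the $k-1$ levels should form an antichain $\mc A_i$. The overall architecture: first run a Mirsky-type decomposition to get a reference stratification of $\mc F$ into $k-1$ levels (using (P5) as the engine — any subfamily with no $k$-chain splits into $k-1$ antichains, and a largest-level argument gives the ratio $1/(2k-2)$; for subfamilies that do contain $k$-chains one first deletes a top set of each chain, but since we will have already reduced to the $k$-chain-free case via the colouring argument, (P5) is really just iterated Mirsky plus pigeonhole). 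Then, level by level from $i=1$ to $k-1$, I would designate $\mc A_i$ to be the sets in level $i$ that are ``light'' with respect to the already-constructed lower levels and with respect to the yet-unclassified sets, and sweep the heavy sets into the appropriate one of $\mc U, \mc D, \mc P, \mc R$ depending on whether they are heavy upwards, downwards, both, or neither-but-still-constrained.

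The key steps, in order. (1) Reduce to $\mc F$ being $k$-chain-free: if $F_1 \subset \cdots \subset F_k$ is a chain in $\mc F$, then in every $(2,k)$-colouring some two of the $F_j$ share a colour and hence we actually get that the colour of $F_k$ is a function of the others — more carefully, one shows $\mc F \setminus \{F_k\}$ has at least as many $(2,k)$-colourings, so WLOG $\mc F$ is $k$-chain-free, and Mirsky gives the level partition. (2) Define, for each set $F$ and each level $i$, the notion of $F$ being comparable to ``many'' ($> 2\omega$, resp.\ $> 3\omega$ in the comparable-pairs count) sets of $\mc A_i$ once $\mc A_i$ is fixed; process levels in increasing order, and within each level move sets that are heavy (in the relevant directions) out of the level into $\mc U_i$ (heavy downward toward $\mc A_{i-1}$), $\mc D_i$ (heavy upward toward $\mc A_{i+1}$), $\mc P_i$ (heavy both ways), leaving $\mc R_i$ for the residual ``never heavy but excluded'' sets, and keeping the rest as $\mc A_i$. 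The definitions of $\omega$ and $\eps$ are calibrated exactly so that (P3) and (P4) are satisfied by construction. (3) Establish the colouring count (P1): this is the crux. For a set $F \in \mc R_i$, the reference colouring $\vp_0$ together with the $>2\omega$ comparabilities into $\mc A_i$ should force $F$'s colour in all but an $\eps$-fraction of colourings — more precisely, a sunflower/counting argument on the $k$-chains through $F$ shows that in a random $(2,k)$-colouring the colour of $F$ is determined with probability $\ge 1-\eps$, so $\mc R$ contributes only a factor $2^{\eps|\mc R|}$; sets in $\mc U, \mc D$ each cost a full factor $2$ (their colours are genuinely free in the bound we can prove), and sets in $\mc P$, being heavy in both directions, are squeezed hard enough to cost only $3^{1/2}$ each (they have essentially one degree of freedom across a comparable pair). (4) Verify (P5) directly from Mirsky plus pigeonhole as above.

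The main obstacle is step (3), the proof of (P1) — in particular the bound $2^{\eps|\mc R|}$ for the residual family and the $3^{|\mc P|/2}$ bound for $\mc P$. The difficulty is that ``heavy toward $\mc A_i$'' only says $F$ is comparable to many antichain sets, not that those comparabilities extend to full $k$-chains; one must use the $k-1$ level structure to complete a comparability $F \supset A$ (say) to a $(k-1)$-chain $A_1 \subset \cdots \subset A_{k-2} \subset F$ with $A_j \in \mc A_j$, so that a monochromatic colouring of $\{A_1,\dots,A_{k-2},F\}$ would be a monochromatic $(k-1)$-chain, and then a union/sunflower bound over the many choices of such chains forces $F$'s colour. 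Making the probabilities multiply correctly — i.e.\ controlling the dependence between the ``$F$ is determined'' events for different $F \in \mc R$ — is what forces the particular value $\eps = 1/(500k^2)$ and the $Ck^4 \log k$ lower bound on $n$ in Theorem~\ref{thm:2-colours}; I expect the cleanest route is to reveal the colours of $\mc A$ first, then bound the number of extensions to $\mc U \cup \mc D \cup \mc P \cup \mc R$ conditionally, with each set of $\mc R$ contributing a conditional factor of at most $2^{\eps}$ on average by the chain-counting estimate. Steps (1), (2), (4) are routine once the constants are pinned down by (3).
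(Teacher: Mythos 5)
Your step (1) is false, and it would trivialise the entire theorem if it were true. For $r=2$ and $k\ge 3$, a $k$-chain $F_1\subset\cdots\subset F_k$ does \emph{not} determine the colour of $F_k$ from the others: the chain merely has to be non-monochromatic, so $F_k$ is forced only in those colourings where $F_1,\dots,F_{k-1}$ happen to be monochromatic. Consequently the restriction map from $(2,k)$-colourings of $\mc F$ to those of $\mc F\setminus\{F_k\}$ is not injective, and deleting $F_k$ can strictly \emph{decrease} the count --- indeed, if one could always reduce to $k$-chain-free families, Theorem~\ref{thm:2-colours} would follow immediately from Erd\H{o}s's bound $\card{\mc F}\le m_{k-1}$, and Proposition~\ref{prop:kchainstructure} would be unnecessary. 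The correct and much weaker reduction is that a family with at least one $(2,k)$-colouring is $(2k-1)$-chain-free (a $(2k-1)$-chain forces a monochromatic $k$-chain by pigeonhole); Mirsky then gives $2k-2$ antichains, which is exactly where the ratio $1/(2k-2)$ in (P5) comes from. Your own uncertainty about that ratio ("iterated Mirsky plus pigeonhole" on a $k$-chain-free family would give $1/(k-1)$) is a symptom of this error.

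Your step (2) assigns sets to the parts in a way that contradicts the properties you must prove. You propose to sweep the \emph{heavy} sets into $\mc U_i$, $\mc D_i$, $\mc P_i$, but (P3) and (P4) require precisely that sets in $\mc U_i,\mc U_{i+1},\mc D_i,\mc D_{i-1},\mc P_i,\mc R_i$ be comparable to \emph{few} (at most $2\omega$) sets of $\mc A_i$ --- these bounds feed the supersaturation lemma in the counting step, so they cannot be relaxed. In the intended construction $\mc U$ and $\mc D$ collect the \emph{sparse} sets (too few supersets in the level above, resp.\ too few subsets in the level below), $\mc P$ collects comparable pairs surviving inside a would-be antichain $\mc A_i$ that are usually oppositely coloured, and the heavy sets are the ones whose colours get \emph{fixed} and which are moved to $\mc R$. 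Finally, your step (3) --- the proof of (P1), which you correctly identify as the crux --- is not an argument: there is no reason that a set comparable to more than $2\omega$ members of an antichain $\mc A_i$ has its colour determined with probability at least $1-\eps$ in a uniformly random $(2,k)$-colouring, and you offer no mechanism for the claimed independence. The factor $2^{\eps\card{\mc R}}$ is obtained instead by an adaptive encoding argument: from an already-coloured set one branches through its $>\omega$ neighbours in the next level, and either almost all colourings make that neighbourhood monochromatic in the opposite colour (move $\ge\omega$ sets to $\mc R$ at a cost absorbed by $\omega=4k\log(1/\eps)/\eps$), or one pays roughly $2^{\eps j}$ to locate the first same-coloured neighbour at position $j$ while banking $j$ new members of $\mc R$; iterating at most $k-1$ times amortises the cost to $\eps$ bits per set moved. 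Without this (or an equivalent) mechanism, (P1) is unproven and the proposal does not establish the proposition.
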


As can be seen from the property (P1) above, this partition gives us some control over the number of $(2,k)$-colourings of the set family $\mc F$.  In the next subsection, we shall show how one may combine this with the other properties guaranteed by Proposition~\ref{prop:kchainstructure} to prove Theorem~\ref{thm:2-colours}, thus motivating this complex partition.  Subsection~\ref{subsec:overview} informally explains how the partition will be created, before the proof of Proposition~\ref{prop:kchainstructure} is given in Subsection~\ref{subsec:partition}.  The final subsection is devoted to the proofs of some technical lemmata we shall require.

\subsection{Counting the colourings} \label{subsec:counting}

In this subsection we will show how the partition from Proposition~\ref{prop:kchainstructure} implies Theorem~\ref{thm:2-colours}.  Observe that $\mc A$, which by property (P2) is the union of $k-1$ antichains, is $k$-chain-free, and hence by the theorem of Erd\H{o}s~\cite{Erd45} has size at most $m_{k-1}$.  Indeed, in the extremal configuration, which is the union of the $k-1$ largest levels of the Boolean lattice, each $\mc A_i$ is one of the uniform levels, while we have $\mc U = \mc D = \mc P = \mc R = \emptyset$.

By property (P1), if $\mc F$ is a family with more $(2,k)$-colourings than the union of the $k-1$ largest levels of the Boolean lattice, then at least one of $\mc U, \mc D, \mc P$ or $\mc R$ must be non-empty.  We shall then use properties (P3) and (P4), which bound the number of comparable pairs in certain subfamilies, to obtain upper bounds on the sizes of the parts of the partition, which will in turn show that the number of $(2,k)$-colourings of $\mc F$ is strictly less than $2^{m_{k-1}}$.

This requires the use of supersaturation results: we shall have to deduce from the small numbers of comparable pairs that the corresponding families are small.  Note that Theorem~\ref{thm:Kleitman} is such a result, showing that a family with few comparable pairs cannot be much larger than the middle level of the Boolean lattice.  While that result is tight, our partition separates into $k-1$ levels, and they cannot all occupy the full middle level.  Hence we will have to derive a weighted version of the supersaturation result that is still tight for multiple levels (close to the middle level).

We thus define the weight of a set $F \subseteq 2^{[n]}$ to be
\[ w_k(F) = \min \left\{ \binom{n}{\card{F}}^{-1}, \binom{n}{\floor{\frac{n-k}{2}}}^{-1} \right\}, \]
and the weight of a set family as the sum of the weights of its members, $w_k(\mc F) = \sum_{F \in \mc F} w_k(F)$.  Note that the weight of a set increases with the distance of the set to the middle level, but this increase is capped to prevent undue influence being given to sets that are much smaller or larger than what we expect to find in the optimal construction.

With this notation in place, we present our supersaturation lemma, which we shall prove in Subsection~\ref{subsec:supersat}.

\begin{lemma} \label{lem:2-supersat}
There is some constant $C > 1$ such that the following statement holds for all $\de\in \left( 0,\frac12 \right)$ and all integers $n$ and $k$ with $k\ge 2$ and $n\ge C \de^{-3} k^2$. If $\mc F$ is a subfamily of $2^{[n]}$ with $\weight(\mc F) \ge 1 + r\binom{n}{\floor{n/2}}^{-1}$ for some $r\in \bR$, then the number of comparable pairs in $\mc F$ is at least $\left(\tfrac{1}{2}-\de\right)rn$.
\end{lemma}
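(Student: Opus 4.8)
The plan is to reduce the weighted statement to Kleitman's unweighted theorem (Theorem~\ref{thm:Kleitman}) by a symmetrisation/averaging argument over the symmetric group $S_n$. First I would observe that it suffices to handle the case where every $F \in \mc F$ satisfies $\floor{(n-k)/2} \le \card F$, i.e.\ the weight cap is not active only on the ``small'' side — more precisely, the weight function $w_k$ is constant on sizes in the window around the middle and strictly increasing as one moves away, so the extremal configurations for a given weight concentrate on a band of $\Theta(k)$ central levels. The key idea: for a random permutation $\pi \in S_n$ acting on $[n]$, and for each $F \in \mc F$, consider the ``random translate'' chain of $F$. Concretely, I would count comparable pairs by a double-counting / averaging over maximal chains in $2^{[n]}$: a uniformly random maximal chain $\mc C$ (a permutation of $[n]$ together with the nested sets it defines) meets $\mc F$ in some random number $X_{\mc C}$ of sets, and the number of comparable pairs in $\mc F$ is at least the expected number of comparable pairs among these, which by the chain structure is $\binom{X_{\mc C}}{2} \ge X_{\mc C} - 1$ whenever $X_{\mc C} \ge 1$. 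The normalisation $\binom{n}{\card F}^{-1}$ appearing in $w_k$ is exactly the probability that a random maximal chain passes through $F$, so $\bE[X_{\mc C}]$ relates directly to a truncated version of $w_k(\mc F)$.

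The main technical step is to make the truncation precise. Write $w_k(\mc F) = \sum_F \min\{\binom{n}{\card F}^{-1}, \binom{n}{\floor{(n-k)/2}}^{-1}\}$. Split $\mc F = \mc F_{\mathrm{mid}} \cup \mc F_{\mathrm{tail}}$, where $\mc F_{\mathrm{mid}}$ consists of sets with $\floor{(n-k)/2} \le \card F \le \ceil{(n+k)/2}$ (say), on which $w_k(F) = \binom{n}{\card F}^{-1}$ is the true chain-passage probability, and $\mc F_{\mathrm{tail}}$ the rest, on which $w_k(F) = \binom{n}{\floor{(n-k)/2}}^{-1}$. For $\mc F_{\mathrm{mid}}$ the chain-averaging gives $\bE[X_{\mc C}] \ge w_k(\mc F_{\mathrm{mid}})$ directly. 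For $\mc F_{\mathrm{tail}}$ I would argue that each tail set is already forced to create many comparable pairs cheaply: a set $F$ with $\card F$ far from the middle lies in a maximal chain that passes through $\binom{n}{\card F}$-many sets, but more usefully, I can bound comparable pairs in $\mc F_{\mathrm{tail}}$ using Kleitman directly (tail sets are far from the middle level so any sizeable collection of them already contains many comparabilities), or absorb them by noting $w_k$ overcounts them by a bounded factor. Then Kleitman's theorem, applied in the averaged form, yields: if $\bE[X_{\mc C}] \ge 1 + r'\binom{n}{\floor{n/2}}^{-1}$ then the number of comparable pairs is $\ge \ceil{\frac{n+1}{2}} r' \ge (\frac12 - \de) r' n$, and I would need to check that the loss in passing from $w_k(\mc F)$ to the quantity controlled by Kleitman costs at most a $(1 - O(\de))$ factor, which is where the hypothesis $n \ge C\de^{-3}k^2$ enters (the $k$-dependence governs how wide the central band is, and $\de^{-3}$ absorbs the slack between $\binom{n}{\floor{n/2}}^{-1}$ and $\binom{n}{\floor{(n-k)/2}}^{-1}$, which is $1 - \Theta(k^2/n)$).

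The hard part will be the bookkeeping around the cap: one must show that replacing the true passage probabilities $\binom{n}{\card F}^{-1}$ by the capped weights $w_k(F)$ on the tail does not break the inequality in the wrong direction. The clean way is probably to prove a ``capped Kleitman'' directly: run the chain-averaging argument but observe that for tail sets the relevant count is not ``probability a random chain hits $F$'' but rather a deterministic lower bound on comparabilities coming from the fact that $\binom{n}{\floor{(n-k)/2}}^{-1} \ge \binom{n}{\card F}^{-1}$, so the capped weight is a \emph{lower} bound on the true passage probability — hence $\bE[X_{\mc C}] \ge w_k(\mc F)$ with no correction needed at all, and the entire lemma follows from Kleitman plus the estimate $\ceil{\frac{n+1}{2}} \ge (\frac12-\de)n$, together with $\binom{n}{\floor{(n-k)/2}}^{-1} \le (1+O(k^2/n))\binom{n}{\floor{n/2}}^{-1} \le \binom{n}{\floor{n/2}}^{-1}/(1-2\de)$ to convert the hypothesis on $w_k(\mc F)$ into the form Kleitman wants. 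If that observation holds (and it should, since $\card F \mapsto \binom{n}{\card F}$ is maximised at the middle), the proof is short; I would double-check the direction of this inequality carefully, as it is the linchpin.
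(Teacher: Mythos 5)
Your reduction to Kleitman's theorem does not close, and the gap is in the step you yourself flag as the linchpin. The ``averaged form'' of Kleitman that you invoke --- if $\bE[X_{\mc C}] \ge 1 + r'\binom{n}{\floor{n/2}}^{-1}$ then $\mathrm{cp}(\mc F) \ge \ceil{\tfrac{n+1}{2}} r'$ --- is not a theorem, and it is in fact false: a single maximal chain $\emptyset \subset \{1\} \subset \cdots \subset [n]$ has $\bE[X_{\mc C}] = \sum_i \binom{n}{i}^{-1} \approx 2$, so $r' \approx \binom{n}{\floor{n/2}}$, yet it contains only $\binom{n+1}{2}$ comparable pairs rather than the $\approx \tfrac{n}{2}\binom{n}{\floor{n/2}}$ the averaged statement would demand. (This family has tiny \emph{capped} weight, which is exactly why the lemma itself survives --- but your reduction discards the cap by bounding $\bE[X_{\mc C}] \ge w_k(\mc F)$ and then works only with $\bE[X_{\mc C}]$.) The honest output of chain-averaging is $\mathrm{cp}(\mc F) \ge \bE\bigl[\binom{X_{\mc C}}{2}\bigr] \ge \bE[X_{\mc C}] - 1 \ge r\binom{n}{\floor{n/2}}^{-1}$, which is short of the target by a factor of roughly $\tfrac{n}{2}\binom{n}{\floor{n/2}}$: each comparable pair is hit by a random maximal chain with exponentially small probability, so the expectation of $\binom{X_{\mc C}}{2}$ drastically undercounts comparabilities. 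Your fallback route --- converting the weight hypothesis into a cardinality hypothesis and applying Kleitman directly --- also fails, because $w_k(F) \le (1+2k^2/n)\binom{n}{\floor{n/2}}^{-1}$ only yields $\card{\mc F} \ge \binom{n}{\floor{n/2}} + r - \tfrac{2k^2}{n}\bigl(\binom{n}{\floor{n/2}}+r\bigr)$, and the loss term $\tfrac{2k^2}{n}\binom{n}{\floor{n/2}}$ typically swamps $r$ (in the paper's applications $r$ is of order $\binom{n}{\floor{n/2}}/n$ or smaller). The whole point of the capped weight is that it is already tight for families sitting on a level adjacent to the middle, which have \emph{fewer} than $\binom{n}{\floor{n/2}}$ sets; unweighted Kleitman sees nothing there.

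The missing ingredient is a mechanism that converts each unit of excess \emph{weight} into $\Theta(n)$ comparable pairs. The paper gets this from a refined LYM inequality, $\sum_{F\in\mc F}\binom{n}{\card F}^{-1}\bigl(1 - \tfrac{d^+(F,\mc F)}{n-\card F}\bigr) \le 1$, proved by counting prefixes and pairs of prefixes of a random permutation; the factor $n$ comes from the denominator $n - \card F \approx n/2$ in the correction term. This is combined with an induction on $\card{\mc F}$ that first peels off any set of degree at least $(\tfrac12-\tfrac{\de}{2})n$, and a separate treatment (via inclusion-minimal elements) of the sets far above the middle, where $\binom{n}{\card F}^{-1}$ is so large that it forces the count on its own. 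If you want to salvage your plan, you would need to prove such a degree-corrected LYM-type inequality yourself --- plain Kleitman, in either cardinality or chain-averaged form, cannot be the black box here.
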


Using Lemma~\ref{lem:2-supersat}, it follows that the families in property (P4) of Proposition~\ref{prop:kchainstructure} have small weight.  However, in order to apply property (P1) to bound the number of $(2,k)$-colourings, we will have to control the sizes of these families instead.  The following lemma, whose proof is also in Subsection~\ref{subsec:supersat}, allows us to convert between weights and sizes.

\begin{lemma}\label{lem:transference}
Let $n$ and $k$ be integers with $k\ge 2$ and $n\ge 4k^2$. Suppose $\mc F_0,\mc F_1,\ldots,\mc F_s$ are subfamilies of $2^{[n]}$ such that $\card{\mc F_0}+\sum_{i=1}^{s}\al_i \card{\mc F_i} \ge m_{k-1}+t$
for some positive reals $\al_1,\ldots,\al_s$, and non-negative integer $t$. Then
\[
\weight(\mc F_0)+\left(1+\tfrac{2k^2}{n}\right)\sum_{i=1}^{s}\al_i\weight(\mc F_i) \ge k-1+t \binom{n}{\floor{n/2}}^{-1}.
\] 
\end{lemma}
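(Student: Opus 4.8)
The plan is to reduce the inequality, by linearity, to a single-family comparison: it suffices to show that for every subfamily $\mc G \subseteq 2^{[n]}$ one has $\card{\mc G} \le c_n \cdot \weight(\mc G) \cdot \binom{n}{\floor{n/2}}$ together with the matching lower bound $\card{\mc G} \ge \weight(\mc G)\binom{n}{\floor{n/2}}$ only on the $k-1$ central levels, where the constant $c_n = 1 + 2k^2/n$ absorbs the distortion coming from the levels near the middle. Concretely, write $b = \binom{n}{\floor{(n-k)/2}}$, so that $\weight(F) = \min\{\binom{n}{\card F}^{-1}, b^{-1}\}$. The key pointwise fact is: for every $F$ with $\floor{(n-k+2)/2} \le \card F \le \floor{(n+k-2)/2}$ (i.e. the levels making up the extremal family), $\binom{n}{\card F} \ge b$, hence $\weight(F) = \binom{n}{\card F}^{-1}$ and $\weight(F)\cdot\binom{n}{\card F} = 1$; whereas for all other $F$, $\weight(F) = b^{-1}$ but $\binom{n}{\card F} \le b$, so $\weight(F)\cdot\binom{n}{\card F} \le 1$. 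Summing over $F \in \mc G$ split according to whether $\card F$ lies in the central band or not, and using $\binom{n}{\card F} \le \binom{n}{\floor{n/2}}$ always, one gets $\card{\mc G} = \sum_F 1 \le \sum_{F \text{ central}} \weight(F)\binom{n}{\card F} + \sum_{F\text{ outside}} 1$, and the second sum is harmless because those sets each contribute weight exactly $b^{-1}$; precisely, $\card{\mc G_{\text{out}}} = b\cdot \weight(\mc G_{\text{out}})$, while $\card{\mc G_{\text{central}}} \le \binom{n}{\floor{n/2}}\weight(\mc G_{\text{central}})$. Since $b = \binom{n}{\floor{(n-k)/2}} \ge (1 + 2k^2/n)^{-1}\binom{n}{\floor{n/2}}$ for $n \ge 4k^2$ — this is the routine binomial-ratio estimate $\binom{n}{\floor{n/2}}/\binom{n}{\floor{n/2}-j} \le 1 + O(j^2/n)$ applied with $j \le k/2$ — both sums are bounded by $(1+2k^2/n)\binom{n}{\floor{n/2}}\weight(\mc G)$, giving $\card{\mc G} \le (1+2k^2/n)\binom{n}{\floor{n/2}}\weight(\mc G)$.

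Granting that, I would finish as follows. Apply the bound above to $\mc G = \mc F_i$ for each $i \ge 1$: $\card{\mc F_i} \le (1+2k^2/n)\binom{n}{\floor{n/2}}\weight(\mc F_i)$. For $\mc F_0$ I instead want the cleaner estimate $\card{\mc F_0} \le \binom{n}{\floor{n/2}}\cdot\bigl(\weight(\mc F_0) + (k-1) - (\text{central count deficit})\bigr)$; more usefully, I claim directly that $\weight(\mc F_0)\binom{n}{\floor{n/2}} \ge \card{\mc F_0} - (k-1)\binom{n}{\floor{n/2}}$, since each of the (at most) $n+1$ levels contributes at most its size to $\card{\mc F_0}$, the $k-1$ central levels have weight-per-set at least $\binom{n}{\floor{n/2}}^{-1}$ (so they contribute $\le$ their full size to the right side after multiplying by $\binom{n}{\floor{n/2}}\weight$), hmm — actually the cleanest route is: $\card{\mc F_0} \le \binom{n}{\floor{n/2}}\weight(\mc F_0) + (k-1)\binom{n}{\floor{n/2}}$, obtained by noting $1 \le \binom{n}{\floor{n/2}}\weight(F)$ for central $F$ and $1 \le (k-1) \cdot (\text{indicator})$... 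I would instead argue that the number of sets of $\mc F_0$ outside the $k-1$ central levels is at most $\binom{n}{\floor{n/2}}\weight(\mc F_0)$ (each such set has weight $b^{-1} \ge \binom{n}{\floor{n/2}}^{-1}/(1+2k^2/n) \ge \tfrac12 \binom{n}{\floor{n/2}}^{-1}$)... the precise bookkeeping here is the place where the stated constant $(1+2k^2/n)$ on the $\mc F_i$ terms but not on $\mc F_0$ must be tracked carefully. Combining: from $\card{\mc F_0} + \sum_i \al_i \card{\mc F_i} \ge m_{k-1} + t$ and $m_{k-1} \ge (k-1)\binom{n}{\floor{n/2}}$ trivially (actually $m_{k-1}$ is the sum of the $k-1$ largest levels, each $\le \binom{n}{\floor{n/2}}$, so $m_{k-1} \le (k-1)\binom{n}{\floor{n/2}}$ — the inequality goes the useful way), divide through by $\binom{n}{\floor{n/2}}$ and substitute the two estimates to get $\weight(\mc F_0) + (1+2k^2/n)\sum_i \al_i \weight(\mc F_i) \ge (m_{k-1}+t)\binom{n}{\floor{n/2}}^{-1} - (k-1) + (k-1) = (m_{k-1} + t)\binom{n}{\floor{n/2}}^{-1}$, and since one still needs $m_{k-1}\binom{n}{\floor{n/2}}^{-1} \ge k-1$... wait, that is false; rather $m_{k-1}\binom{n}{\floor{n/2}}^{-1} \le k-1$, so I must be subtracting $(k-1)$ from the left side via the $\mc F_0$ estimate, and the constant matches. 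I will present this with the signs arranged correctly.

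The main obstacle is precisely this asymmetric bookkeeping: the lemma puts the loss factor $(1+2k^2/n)$ only on the $\al_i\weight(\mc F_i)$ terms and leaves $\weight(\mc F_0)$ untouched, and the right-hand side is $k-1 + t\binom{n}{\floor{n/2}}^{-1}$ rather than $(m_{k-1}+t)\binom{n}{\floor{n/2}}^{-1}$ — so one must exploit that $k-1 \ge m_{k-1}\binom{n}{\floor{n/2}}^{-1}$ to trade the slack, and one must get the $\mc F_0$ inequality tight enough (no extra constant factor) that the argument closes. The underlying analytic input — the estimate $\binom{n}{\floor{n/2} - j}/\binom{n}{\floor{n/2}} \ge (1 + 2k^2/n)^{-1}$ for $0 \le j \le \ceil{k/2}$ when $n \ge 4k^2$, equivalently $\binom{n}{\floor{(n-k)/2}} \ge (1+2k^2/n)^{-1}\binom{n}{\floor{n/2}}$ — is a standard computation (telescoping the ratios $\binom{n}{i-1}/\binom{n}{i} = i/(n-i+1)$, each within $1 \pm O(k/n)$ of $1$ over the relevant range, then bounding the product of $\le k$ such factors) and I would relegate it to a one-line estimate rather than a displayed derivation. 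Everything else is summation and rearrangement.
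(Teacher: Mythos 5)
Your handling of the terms $\mc F_i$, $i\ge 1$, is fine (indeed $\weight(F)\ge \binom{n}{\floor{n/2}}^{-1}$ for every $F$ already gives $\card{\mc F_i}\le \binom{n}{\floor{n/2}}\weight(\mc F_i)$ with no loss factor), and the binomial-ratio estimate $\binom{n}{\floor{(n-k)/2}}\ge \bigl(1+\tfrac{2k^2}{n}\bigr)^{-1}\binom{n}{\floor{n/2}}$ is exactly the analytic input the paper uses. But the proof has a genuine gap at the one place where the lemma has content: the treatment of $\mc F_0$. Writing $N=\binom{n}{\floor{n/2}}$, the term-by-term substitution you end with produces only $\weight(\mc F_0)+\bigl(1+\tfrac{2k^2}{n}\bigr)\sum_i\al_i\weight(\mc F_i)\ge (m_{k-1}+t)N^{-1}$, and since $m_{k-1}\le (k-1)N$ this falls short of the required $k-1+tN^{-1}$ by the deficit $D=(k-1)-m_{k-1}N^{-1}>0$ (in general). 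You notice this yourself (``wait, that is false'') and then assert the signs will work out, but none of the candidate $\mc F_0$-estimates you float actually closes the gap: the inequality you need is a genuine lower bound of the form $\weight(\mc F_0)\ge (k-1)-\bigl(1+\tfrac{2k^2}{n}\bigr)\bigl(m_{k-1}+t-\card{\mc F_0}\bigr)N^{-1}$ (when $\card{\mc F_0}\le m_{k-1}+t$), and proving it requires identifying the \emph{lightest} family of a given size, not just the pointwise bounds $N^{-1}\le \weight(F)\le \bigl(1+\tfrac{2k^2}{n}\bigr)N^{-1}$.

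This is precisely what the paper's proof supplies. It compares against the lightest family $\mc H$ with $m_{k-1}+t$ sets, namely the $k-1$ central levels (total weight exactly $k-1$, since each central level contributes weight $1$, not $\binom{n}{j}N^{-1}$ --- this is where the ``$k-1$'' rather than ``$m_{k-1}N^{-1}$'' comes from) plus $t$ sets of the next level; then it splits off a lightest subfamily $\mc H_0\subseteq\mc H$ with $\card{\mc H_0}=\card{\mc F_0}$, so that $\weight(\mc H_0)\le\weight(\mc F_0)$ by extremality, and charges the remaining $\card{\mc H\setminus\mc H_0}\le\sum_i\al_i\card{\mc F_i}$ sets to the $\mc F_i$'s at cost at most $\bigl(1+\tfrac{2k^2}{n}\bigr)N^{-1}$ each --- which is exactly how the loss factor lands on the $\al_i\weight(\mc F_i)$ terms and not on $\weight(\mc F_0)$. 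Your plan can be repaired by inserting this extremal-family comparison (or an equivalent statement about the minimum weight of a family of prescribed size), but as written the asymmetric bookkeeping you flag as the main obstacle is left unresolved, and the final chain of inequalities does not prove the lemma.
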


Armed with these lemmata, together with Proposition~\ref{prop:kchainstructure}, we are in position to prove our theorem.

\begin{proof}[Proof of Theorem~\ref{thm:2-colours}]
Suppose $n \ge C k^4 \log k$, for some constant $C$ large enough to satisfy the inequalities that will follow, and that $\mc F \subseteq 2^{[n]}$ maximises the number of $(2,k)$-colourings.  We apply Proposition~\ref{prop:kchainstructure} to obtain the claimed partition of $\mc F$ for which the properties (P1)--(P5) hold.

In light of the lower bound of~\eqref{ineq:kchainLB}, the number of $(2,k)$-colourings of $\mc F$ must be at least $2^{m_{k-1}}$.  By property (P1), the number of $(2,k)$-colourings of $\mc F$ is at most $2^{\card{\mc A} + \card{\mc U} + \card{\mc D} + \eps \card{\mc R}} 3^{\frac12 \card{\mc P}}$, and thus
\begin{equation}\label{ineq:2-colourSizeSum}
m_{k-1} \le \card{\mc A} + \card{\mc U} + \card{\mc D} + \eps \card{\mc R} + \alpha \card{ \mc P},
\end{equation}
where $\alpha = \frac12 \log 3$.  Our first claim shows that the size of $\mc R$ can be controlled by the sizes of $\mc U, \mc D$ and $\mc P$.

\begin{claim} \label{clm:R-upper}
$\card{\mc R} \le 3k \left( \card{\mc U} + \card{\mc D} + \alpha \card{\mc P} \right)$.
\end{claim}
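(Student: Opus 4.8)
The plan is to bound $\card{\mc R}$ by controlling each subpart $\mc R_i$ separately, using property (P3) to relate $\mc R_i$ to $\mc A_i$ and then property (P4) together with the supersaturation machinery (Lemmas~\ref{lem:2-supersat} and~\ref{lem:transference}) to bound sizes. First I would fix $1 \le i \le k-1$ and consider the family $\mc B_i = \mc U_i \cup \mc D_i \cup \mc P_i$ paired with $\mc A_i$; by property (P4) this family has at most $3\omega\card{\mc B_i}$ comparable pairs. The key observation from property (P3) is that every set $F \in \mc R_i$ is comparable to at most $2\omega$ sets of $\mc A_i$, so $F$ has low up-degree-plus-down-degree into $\mc A_i$; this is the hook that lets us argue $\mc A_i$ alone cannot be much larger than a single level, hence $\mc R_i$ sits close to the middle level too and must itself contribute many comparable pairs if it is large.

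Concretely, I would run the following argument. Consider $\mc A_i \cup \mc R_i$. Each vertex of $\mc R_i$ contributes at most $2\omega$ comparable pairs to this family by (P3), and $\mc A_i$ is an antichain by (P2), so $\mc A_i \cup \mc R_i$ has at most $2\omega\card{\mc R_i}$ comparable pairs. Now apply the supersaturation Lemma~\ref{lem:2-supersat}: with $\de$ a small absolute constant, a family whose weight exceeds $1 + r\binom{n}{\floor{n/2}}^{-1}$ has at least $(\tfrac12 - \de)rn$ comparable pairs, so $\weight(\mc A_i \cup \mc R_i) \le 1 + O(\omega\card{\mc R_i}/n)$. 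Doing this for every $i$ and for the analogous families $\mc A_i \cup \mc B_i$ and summing over $i$ gives, via Lemma~\ref{lem:transference}, a bound of the shape
\[
\card{\mc R} \le \big( k-1 + O(\tfrac{\omega}{n})(\card{\mc R} + \card{\mc U} + \card{\mc D} + \card{\mc P}) - (k-1) \big)\binom{n}{\floor{n/2}},
\]
which, since $\omega = 4k\log(1/\eps)/\eps = O(k^5 \log k)$ and $n \ge Ck^4\log k$ can be taken so that $\omega/n$ is as small as we like relative to $1/k$, absorbs the $\card{\mc R}$ term on the right and yields $\card{\mc R} \le 3k(\card{\mc U}+\card{\mc D}+\alpha\card{\mc P})$ after cleaning up constants. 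The $\alpha = \tfrac12\log 3$ weighting on $\mc P$ enters because $\mc P$-sets will be charged against a weaker ``$3$-colouring'' budget in (P1), and it is convenient to carry the same weighting here so that the claim plugs directly into inequality~\eqref{ineq:2-colourSizeSum}.

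The main obstacle I anticipate is bookkeeping the many subpart-by-subpart applications of the two supersaturation lemmas so that all the error terms genuinely are $o(1/k)$ and can be absorbed — in particular, making sure the factor $(1 + 2k^2/n)$ from Lemma~\ref{lem:transference}, the $3\omega$ comparable-pair bound from (P4), and the choice of $\de$ in Lemma~\ref{lem:2-supersat} are compatible, and that the final coefficient really comes out below $3k$ rather than merely $O(k)$. A secondary subtlety is that $\mc R_i$ might only be comparable to $\mc A_i$ through (P3) but could a priori have many internal comparable pairs or comparabilities to other $\mc A_j$; I would handle this by noting that we only need a \emph{lower} bound on comparable pairs inside $\mc A_i \cup \mc R_i$ to feed the supersaturation lemma, and internal pairs of $\mc R_i$ only help, while (P5) guarantees we can always extract a large antichain if needed to control the weight contribution of $\mc R_i$ itself. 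Assembling these pieces carefully should give the claimed linear bound.
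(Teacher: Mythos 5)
Your toolkit (supersaturation via Lemma~\ref{lem:2-supersat} plus the transference Lemma~\ref{lem:transference}) is the right one, but the core of your argument has a genuine gap, and it is exactly at the point you flag as a ``secondary subtlety''. You bound the comparable pairs of $\mc A_i \cup \mc R_i$ by $2\omega\card{\mc R_i}$ using (P2) and (P3), but (P3) only controls comparabilities between $\mc R_i$ and $\mc A_i$; it says nothing about comparable pairs \emph{inside} $\mc R_i$, and $\mc R_i$ is not an antichain. Your proposed fix --- that internal pairs of $\mc R_i$ ``only help'' because we only need a lower bound on comparable pairs --- has the logic backwards: Lemma~\ref{lem:2-supersat} is applied in the contrapositive (few comparable pairs $\Rightarrow$ small weight), so you need an \emph{upper} bound on the number of comparable pairs, and uncontrolled internal pairs of $\mc R_i$ destroy that upper bound. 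The correct move, which you mention only as a fallback, is essential and must come first: apply (P5) to each $\mc R_i$ to extract an antichain $\mc R_i' \subseteq \mc R_i$ with $\card{\mc R_i'} \ge \card{\mc R_i}/(2k-2)$, and run the supersaturation argument on $\mc G_i = \mc A_i \cup \mc R_i'$, whose comparable pairs all lie in $\mc A_i \times \mc R_i'$ and are at most $2\omega\card{\mc R_i'}$ by (P3). This loss of a factor $2k-2$ is precisely where the coefficient $3k$ in the claim comes from; without it there is no route to a coefficient of order $k$, and with it your hope of ``cleaning up constants'' below $3k$ by other means is moot.

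The second gap is that your displayed inequality never actually connects $\card{\mc R}$ to $\card{\mc U}+\card{\mc D}+\alpha\card{\mc P}$: supersaturation plus transference only yield $\card{\mc A} + \card{\mc R'} = \card{\mc G} \le m_{k-1} + 6\omega n^{-1}\card{\mc R'}$, which involves neither $\mc U$, $\mc D$ nor $\mc P$. The indispensable extra ingredient is inequality~\eqref{ineq:2-colourSizeSum}, i.e.\ $m_{k-1} \le \card{\mc A} + \card{\mc U} + \card{\mc D} + \eps\card{\mc R} + \alpha\card{\mc P}$, which comes from (P1) together with the standing assumption that $\mc F$ has at least $2^{m_{k-1}}$ colourings. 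Substituting this for $m_{k-1}$ and cancelling $\card{\mc A}$ gives $(1 - 6\omega n^{-1})\card{\mc R'} \le \card{\mc U} + \card{\mc D} + \eps\card{\mc R} + \alpha\card{\mc P}$, after which $\card{\mc R'} \ge \card{\mc R}/(2k-2)$ and $\eps = 1/(500k^2) \ll 1/(2k-2)$ absorb the $\eps\card{\mc R}$ term and produce the factor $3k$. Your sketch treats the appearance of $\alpha$ as a bookkeeping convention and leaves this substitution out entirely (the displayed bound is also dimensionally off, with a stray factor $\binom{n}{\floor{n/2}}$). Finally, the families $\mc A_i \cup \mc B_i$ and property (P4) play no role in this claim --- they are needed only later in the proof of Theorem~\ref{thm:2-colours} --- so that part of your plan is a red herring.
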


\begin{proof}
We start by applying the property (P5) to each subfamily $\mc R_i$, obtaining an antichain $\mc R_i' \subseteq \mc R_i$.  Define $\mc R' = \cup_i \mc R_i'$, and note that $\card{\mc R'} \ge \card{\mc R} / (2k-2)$.

Further define $\mc G_i = \mc A_i \cup \mc R_i'$, and consider the comparable pairs in $\mc G_i$.  By (P2) and (P5), both $\mc A_i$ and $\mc R_i'$ are antichains, and hence the only comparabilities in $\mc G_i$ come from $\mc A_i \times \mc R_i'$.  By (P3), there are at most $2 \omega \card{\mc R_i'}$ such pairs.  Applying Lemma~\ref{lem:2-supersat} with $\delta = \frac16$ and $r = 6 \omega n^{-1} \card{\mc R_i'}$, we deduce that $w_k(\mc G_i) \le 1 + 6 \omega n^{-1} \card{\mc R_i'} \binom{n}{\floor{n/2}}^{-1}$.  Letting $\mc G = \cup_i \mc G_i$ and summing over $i \in [k-1]$, we have $w_k(\mc G) \le k-1 + 6 \omega n^{-1} \card{\mc R'} \binom{n}{\floor{n/2}}^{-1}$.

Applying Lemma~\ref{lem:transference} with $s = 1$, $\alpha_1 = 1$, $\mc F_0 = \mc G$ and $\mc F_1 = \emptyset$, and then using~\eqref{ineq:2-colourSizeSum}, we find
\begin{equation} \label{ineq:cardGupper}
\card{\mc G} \le m_{k-1} + 6 \omega n^{-1} \card{\mc R'} \le \card{\mc A} + \card{\mc U} + \card{\mc D} + \eps \card{\mc R} + \alpha \card{\mc P} + 6 \omega n^{-1} \card{\mc R'}.
\end{equation}

Note that $\mc G = \mc A \cup \mc R'$, and thus $\card{\mc G} = \card{\mc A} + \card{\mc R'}$.  Substituting this into~\eqref{ineq:cardGupper} and rearranging gives
\[ \left( 1 - 6 \omega n^{-1} \right) \card{\mc R'} \le \card{\mc U} + \card{\mc D} + \eps \card{\mc R} + \alpha \card{\mc P}. \]

Recalling that $\card{\mc R'} \ge \card{\mc R} / (2k-2)$ and rearranging again, we have the desired bound, since
\[ \frac{1}{3k} \card{\mc R} \le \left( \frac{1 - 6 \omega n^{-1}}{2k-2} - \eps \right) \card{\mc R} \le \card{\mc U} + \card{\mc D} + \alpha \card{\mc P}, \]
where the first inequality follows from the facts that $\eps = 1/(500k^2)$ and $n \ge 18 \omega$.
\end{proof}

By combining~\eqref{ineq:2-colourSizeSum} with Claim~\ref{clm:R-upper}, we obtain
\[ m_{k-1} \le \card{\mc A} + \left(1 + 3k\eps \right) \left( \card{\mc U} + \card{\mc D} + \alpha \card{\mc P} \right). \]

We convert this into a lower bound on the weights of these families by using Lemma~\ref{lem:transference} with $s = 3$, $\mc F_0 = \mc A$, $\mc F_1 = \mc U$, $\mc F_2 = \mc D$, $\mc F_3 = \mc P$, $\alpha_1 = \alpha_2 = 1 + 3 k \eps$, $\alpha_3 = (1 + 3k) \alpha$ and $t = 0$.  This gives
\begin{align}\label{ineq:supersatweights}
k-1 &\le w_k(\mc A) + \left( 1 + \frac{2k^2}{n} \right) \left( 1 + 3k \eps \right) \left( w_k(\mc U) + w_k(\mc D) + \alpha w_k(\mc P) \right) \notag \\
	&\le w_k(\mc A) + \left( 1 + \frac{1}{30k} \right) \left( w_k(\mc U) + w_k(\mc D) \right) + \frac{17}{20} w_k(\mc P).
\end{align}

To complete the proof, we shall obtain an upper bound on the weights of these families as well, with the combination of the two only being satisfied when $\mc U = \mc D = \mc P = \emptyset$.  To this end, for each $1 \le i \le k-1$, let $\mc B_i$ be the family from $\left\{ \mc U_i \cup \mc D_i \cup \mc P_i, \mc U_i \cup \mc D_{i-1}, \mc D_i \cup \mc U_{i+1} \right\}$ that has greatest weight. Our next claim shows that these weights cannot be too small.

\begin{claim}\label{clm:supersatgain}
$\sum_i w_k(\mc B_i)\ge \left(1 + \frac{1}{20k} \right) \left( w_k(\mc U) + w_k (\mc D) \right) + \frac{9}{10} w_k( \mc P).$
\end{claim}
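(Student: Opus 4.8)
The plan is to prove the inequality term by term, working with one index $i$ at a time and then summing. For a fixed $i$, recall that $\mc B_i$ is the maximum-weight family among the three candidates $\mc U_i \cup \mc D_i \cup \mc P_i$, $\mc U_i \cup \mc D_{i-1}$, and $\mc D_i \cup \mc U_{i+1}$. I would first observe that the three candidate families together ``cover'' the sets of $\mc U_i, \mc D_i, \mc P_i$ with certain multiplicities: each set of $\mc P_i$ appears exactly once (only in the first candidate), while each set of $\mc U_i$ appears in the first and third candidates, i.e. twice, and likewise each set of $\mc D_i$ appears in the first and second candidates, twice. Summing the weights of the three candidates therefore gives exactly $2 w_k(\mc U_i) + 2 w_k(\mc D_i) + w_k(\mc P_i)$, so the maximum of the three is at least a third of this, namely $w_k(\mc B_i) \ge \frac{2}{3}\left( w_k(\mc U_i) + w_k(\mc D_i) \right) + \frac13 w_k(\mc P_i)$ --- but this bound is too weak, since we need a factor exceeding $1$ in front of $w_k(\mc U) + w_k(\mc D)$, not $\frac23$.

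The fix is to exploit property (P3), which tells us that $\mc U_1 = \mc D_{k-1} = \emptyset$, together with the fact (from (P3), taking $\mc U_k = \mc D_0 = \emptyset$) that the indices overlap favourably. I would instead sum over $i$ from the start: $\sum_i w_k(\mc B_i) \ge \frac13 \sum_i \left( 2 w_k(\mc U_i) + 2 w_k(\mc D_i) + w_k(\mc P_i) \right) = \frac23 \left( w_k(\mc U) + w_k(\mc D) \right) + \frac13 w_k(\mc P)$, and then note that because $\mc U_1 = \mc D_{k-1} = \emptyset$, some of the candidate families coincide (for instance, when $i = 1$ the family $\mc U_1 \cup \mc D_0 = \emptyset$ contributes nothing, forcing the maximum to be attained by one of the other two, which then lets us ``charge'' more of the $\mc U$ and $\mc D$ weight). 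More carefully, I would partition $[k-1]$ according to which of the three candidates achieves the maximum, and track how much weight of $\mc U$ and $\mc D$ is captured; the boundary conditions $\mc U_1 = \mc D_{k-1} = \emptyset$ guarantee that we never have to make the ``wasteful'' choice everywhere, and a counting argument over the at most $k-1$ indices upgrades the constant from $\frac23$ to $1 + \frac{1}{20k}$ (the gain of order $1/k$ being exactly the fraction of indices at which the boundary conditions force a better choice).

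The main obstacle I anticipate is the bookkeeping in this last step: one must set up the right ``potential'' or weighting across the $k-1$ indices so that the local choices, constrained by which families are empty at the two ends of the range, aggregate to a global factor of $1 + \Theta(1/k)$ rather than merely $1$. I would handle this by writing $x_i = w_k(\mc U_i)$, $y_i = w_k(\mc D_i)$, $z_i = w_k(\mc P_i)$ and lower-bounding $\sum_i \max\{ x_i + y_i + z_i,\ x_i + y_{i-1},\ y_i + x_{i+1} \}$ as a linear program in the $x_i, y_i, z_i \ge 0$ with $x_1 = y_{k-1} = 0$; its optimum (a small, explicit calculation) yields the stated constants, and in particular the coefficient $\frac{9}{10}$ of $w_k(\mc P)$ will come out of balancing the $\mc P$-contribution against the forced reallocation of $\mc U$- and $\mc D$-weight. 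Everything else --- the additivity of $w_k$ over the subparts, and the reduction of the sum to the per-index quantities --- is routine.
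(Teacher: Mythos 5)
Your proposal has the right skeleton --- you correctly identify that the naive ``average of the three candidates'' bound is too weak, that the gain must come from the index-shifted candidates $\mc U_i \cup \mc D_{i-1}$ and $\mc D_i \cup \mc U_{i+1}$ together with the boundary conditions $\mc U_1 = \mc D_{k-1} = \emptyset$, and that some convex balancing produces the $\tfrac{9}{10}$ coefficient on $w_k(\mc P)$. But the decisive quantitative step is missing: you defer the entire extraction of the $1 + \Theta(1/k)$ factor to ``a small, explicit calculation'' of an LP optimum that you never perform, and the heuristic you offer for it (``the fraction of indices at which the boundary conditions force a better choice'') is not the actual mechanism. The paper's gain does not come from a fraction of indices; it comes from a pigeonhole argument applied to a \emph{single} part. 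Concretely: among the at most $2(k-1)$ families $\{\mc U_i, \mc D_i\}$, the heaviest one --- say $\mc U_j$ with $j \ge 2$ --- satisfies $w_k(\mc U_j) \ge \tfrac{1}{2k}\bigl(w_k(\mc U)+w_k(\mc D)\bigr)$. Choosing $\mc B_i = \mc D_i \cup \mc U_{i+1}$ for $i \le j-1$ and $\mc B_i = \mc U_i \cup \mc D_i \cup \mc P_i$ for $i \ge j$ makes the resulting sum cover every $\mc U_i$ and $\mc D_i$ at least once and $\mc U_j$ twice (here $\mc U_1 = \mc D_{k-1} = \emptyset$ is what guarantees the partial ranges still exhaust $w_k(\mc U)$ and $w_k(\mc D)$), at the cost of discarding the $\mc P_i$ for $i<j$. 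This yields $\sum_i w_k(\mc B_i) \ge \bigl(1+\tfrac{1}{2k}\bigr)\bigl(w_k(\mc U)+w_k(\mc D)\bigr)$; combining it with weight $\tfrac{1}{10}$ against the trivial bound $\sum_i w_k(\mc B_i) \ge w_k(\mc U)+w_k(\mc D)+w_k(\mc P)$ with weight $\tfrac{9}{10}$ gives exactly the claimed constants.

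Two smaller points. First, your multiplicity count for the averaging bound is off: for fixed $i$ the set $\mc U_i$ lies in the first \emph{and second} candidates (not first and third), and you drop the cross terms $w_k(\mc D_{i-1})$ and $w_k(\mc U_{i+1})$; summing correctly over $i$ the averaging bound is $w_k(\mc U)+w_k(\mc D)+\tfrac13 w_k(\mc P)$, not $\tfrac23\bigl(w_k(\mc U)+w_k(\mc D)\bigr)+\tfrac13 w_k(\mc P)$ --- still too weak, so your conclusion stands, but the arithmetic should be fixed. Second, your LP $\sum_i \max\{x_i+y_i+z_i,\ x_i+y_{i-1},\ y_i+x_{i+1}\}$ is a legitimate framing and its optimum is indeed at least the claimed bound (the paper's argument is in effect a feasible certificate for it), but as written the proposal contains no proof of this, and proving it essentially requires the pigeonhole-plus-convex-combination argument above.
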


\begin{proof}
Since we could always have chosen $\mc B_i = \mc U_i \cup \mc D_i \cup \mc P_i$, we must have $w_k(\mc B_i) \ge w_k( \mc U_i) + w_k(\mc D_i) + w_k ( \mc P_i)$.  Summing these inequalities over all $i$, we have
\begin{equation} \label{ineq:claim2bound1}
\sum_{i=1}^{k-1} w_k(\mc B_i) \ge w_k(\mc U) + w_k(\mc D) + w_k(\mc P).
\end{equation}

On the other hand, consider the families in $\left\{ \mc U_i, \mc D_i : i \in [k-1] \right\}$, and note by (P3) we have $\mc U_1 = \mc D_{k-1} = \emptyset$.  Suppose, for some $j \ge 2$, $\mc U_j$ has the greatest weight out of these families (a similar argument applies when some $\mc D_j$, $j \le k-2$, is the heaviest).  As the heaviest of the families, we must have $w_k(\mc U_j) \ge \frac{1}{2k}(w_k(\mc U) + w_k(\mc D))$.

We can again bound the total weight of the families $\mc B_i$ from below by the following choices.  For $i \le j-1$, take $\mc B_i = \mc D_i \cup \mc U_{i+1}$, and $\mc B_i = \mc U_i \cup \mc D_i \cup \mc P_i$ when $i \ge j$.  We would then have
\begin{align} \label{ineq:claim2bound2}
\sum_{i=1}^{k-1} w_k(\mc B_i) &\ge \sum_{i=1}^{j-1} \left( w_k(\mc D_i) + w_k(\mc U_i) \right) + \sum_{i=j}^{k-1} \left( w_k( \mc U_i) + w_k (\mc D_i) + w_k (\mc P_i) \right) \notag \\
&\ge \sum_{i=2}^{k-1} w_k( \mc U_i) + \sum_{i=1}^{k-2} w_k( \mc D_i) + w_k(\mc U_j) = w_k(\mc U) + w_k(\mc D) + w_k(\mc U_j). \notag \\
&\ge \left( 1 + \frac{1}{2k} \right) \left( w_k(\mc U) + w_k(\mc D) \right).
\end{align}

Taking a convex combination of these lower bounds, with coefficient $\tfrac{9}{10}$ for~\eqref{ineq:claim2bound1} and $\tfrac{1}{10}$ for~\eqref{ineq:claim2bound2}, we arrive at the claimed lower bound on $\sum_i w_k(\mc B_i)$.
\end{proof}

To finish the proof, we now bound the weights of the families $\mc B_i$ from above.  Using (P4), we observe that the number of comparable pairs in the family $\mc A_i \cup \mc B_i$ is at most
\[ 3 \omega \card{\mc B_i} \le 3 \omega \left( \card{\mc U_i} + \card{\mc U_{i+1}} + \card{\mc D_i} + \card{\mc D_{i-1}} + \card{\mc P_i} \right). \]
Applying Lemma~\ref{lem:2-supersat} with $\delta = \tfrac16$ and $r = 9 \omega n^{-1} \left( \card{\mc U_i} + \card{\mc U_{i+1}} + \card{\mc D_i} + \card{\mc D_{i-1}} + \card{\mc P_i} \right)$, we find
\begin{align*}
w_k(\mc A_i \cup \mc B_i) &\le 1 + 9 \omega n^{-1} \left( \card{\mc U_i} + \card{\mc U_{i+1}} + \card{\mc D_i} + \card{\mc D_{i-1}} + \card{\mc P_i} \right) \binom{n}{\floor{n/2}}^{-1} \\
&\le 1 + 9 \omega n^{-1} \left( w_k(\mc U_i ) + w_k (\mc U_{i+1} ) + w_k (\mc D_i) + w_k( \mc D_{i-1} ) + w_k( \mc P_i ) \right),
\end{align*}
where the second inequality follows from the fact that every set in $2^{[n]}$ has weight at least $\binom{n}{\floor{n/2}}^{-1}$.  Summing over $1 \le i \le k-1$, 
\begin{equation} \label{ineq:Bweightupper}
	w_k(\mc A) + \sum_{i=1}^{k-1} w_k(\mc B_i) = \sum_{i=1}^{k-1} w_k (\mc A_i \cup \mc B_i) \le k-1 + 9 \omega n^{-1} \left( 2 w_k(\mc U) + 2 w_k(\mc D) + w_k(\mc P) \right).
\end{equation}

On the other hand, we have the lower bound
\begin{align} \label{ineq:Bweightlower}
	w_k(\mc A) + \sum_{i=1}^{k-1} w_k(\mc B_i) &\ge w_k(\mc A) + \left(1 + \tfrac{1}{20k} \right) \left(w_k( \mc U) + w_k( \mc D) \right) + \tfrac{9}{10} w_k(\mc P) \notag \\
	&\ge k-1 + \tfrac{1}{60k} \left( w_k(\mc U) + w_k(\mc D) \right) + \tfrac{1}{20} w_k(\mc P),
\end{align}
where we use Claim~\ref{clm:supersatgain} for the first inequality and~\eqref{ineq:supersatweights} for the second.  As $n > 1080 \omega k$,~\eqref{ineq:Bweightupper} and~\eqref{ineq:Bweightlower} can only be simultaneously satisfied when $w_k(\mc U) = w_k(\mc D) = w_k(\mc P) = 0$.  This in turn implies $\mc U = \mc D = \mc P = \emptyset$.  By Claim~\ref{clm:R-upper}, it follows that $\mc R = \emptyset$ as well.  Hence $\mc F = \mc A$, the union of $k-1$ antichains.  Thus $\mc F$ is a $k$-chain-free family, which can have size at most $m_{k-1}$, with at most $2^{m_{k-1}}$ two-colourings.  This completes the proof of Theorem~\ref{thm:2-colours}.
\end{proof}

\subsection{Forming the partition} \label{subsec:partition}

Now that we have seen how the partition from Proposition~\ref{prop:kchainstructure} can be used to establish Theorem~\ref{thm:2-colours}, we shall describe how a partition with properties (P1)--(P5) can be formed.  We begin with an informal overview of the process, before providing a detailed proof of the proposition.

\subsubsection{An overview} \label{subsec:overview}

For inspiration, we first consider the optimal construction, which is the union of the $k-1$ largest uniform levels of the Boolean lattice.  This family admits a natural partition into $k-1$ antichains.  Moreover, each set contains many sets from the antichain below, and is contained in many sets from the antichain above.

We shall endeavour to build a similar structure --- a sequence of antichains, with each set containing many sets from the antichain below, and being contained in many sets from the antichain above.  This large degree in the comparability graph $G(\mc F)$ (as defined in Section~\ref{subsec:orgnot}) will be important for us, as we will be able to use it to create many chains.  If we find a $(k-1)$-chain $F_1 \subset F_2 \subset \hdots \subset F_{k-1}$ that is monochromatic in many colourings, then it follows that any sets that contain $F_{k-1}$ must all have the opposite colour.  This allows us to remove all such sets into a separate family $\mc R$.  As the colour of these sets is determined, we do not lose many colourings of the entire family $\mc F$ when we do so.  Similarly, any sets that are contained in $F_1$ could also be removed.

Sometimes, though, we may encounter sets that do not have the desired large degrees into their neighbouring antichains.  We take such sets out of their antichains, and place them in separate families.  If they are contained in too few sets from the antichain above, we call them \emph{up-sparse}, and place them in the family $\mc U$.  If they contain too few sets from the antichain below, we name them \emph{down-sparse}, and place them in $\mc D$.  As we have seen in the previous subsection, their low degrees into the (eventual) antichains will allow us to exploit the supersaturation result of Lemma~\ref{lem:2-supersat}.

At this stage in the process, the parts $\mc A_1$ up to $\mc A_{k-1}$ may not actually be antichains, but could contain a few comparable pairs.  Our next step is to remove these pairs to form genuine antichains.  If a pair is often monochromatic, then we can extend it into a monochromatic chain, which again allows us to remove a large number of sets into $\mc R$ without losing many colourings.  On the other hand, if a pair is more often oppositely-coloured, then we place it in the family $\mc P$ instead.  Note that for the pairs in $\mc P$, we know that out of the four ways the two sets could have been coloured, two are much more likely to occur, which allows us to better bound the number of colourings with respect to the sets in $\mc P$.

This explains the key ideas behind the formation of the partition, as well as the roles played by the parts $\mc R$, $\mc U$, $\mc D$ and $\mc P$.  Once we have completed the steps described above, there will be some final cleaning of the partition to ensure that all the properties (P1)--(P5) hold, after which the proof of Proposition~\ref{prop:kchainstructure} will be complete.  We now proceed to the details of the procedure.

\subsubsection{The detailed procedure}

To begin, observe that if $\mc F$ contained a $(2k-1)$-chain $\mc C$, then in any two-colouring of $\mc F$, $\mc C$ would contain a monochromatic $k$-chain.  Hence, if $\mc F$ admits even a single $(2,k)$-colouring, we deduce that $\mc F$ must be $(2k-1)$-chain-free. Appealing to Mirsky's theorem \cite{Mir71}, we see that $\mc F$ can be partitioned into $2k-2$ antichains.  Explicitly, let $\mc A_i$ be the maximal elements in the poset $\mc F \setminus \left( \cup_{j=1}^{i-1} \mc A_j \right)$, which gives a partition 
\begin{equation} \label{eqn:initialPart}
\mc F = \mc A_1 \cup \hdots \cup \mc A_{2k-2}
\end{equation}
into $2k-2$ antichains (some of which may be empty).  This partition has the following quality.

\begin{itemize}
	\item[(Q1)] If $i < j$, $F \in \mc A_i$ and $G \in \mc A_j$, then $F \not\subset G$.
\end{itemize}

(Q1) obviously holds for the partition described above.  However, our partition shall be dynamic, as we will move sets between parts to reach the desired final partition.  We shall ensure (Q1) is maintained throughout the process.  It will also be convenient to fix a linear extension $(\mc F, \prec)$, such that given sets $F, G \in \mc F$, we have $F \subset G$ only if $F \prec G$.  We further require that the linear extension starts with all sets in $\mc A_{2k-2}$, followed by those in $\mc A_{2k-3}$, and so on, until the sets in $\mc A_1$ are listed last.

As we proceed, we will denote by $\fr C$ the set of $(2,k)$-colourings of $\mc F$ currently under consideration. At the start of the process, $\fr C$ will contain all $(2,k)$-colourings of $\mc F$.  However, we shall occasionally colour some sets in $\mc F$, and will only retain in $\fr C$ those colourings that agree with the partial colouring of $\mc F$.  One of the ways we shall colour sets is through the branching operation, which we will now describe.  The main idea is to build chains until we find a large number of sets that must often be monochromatic, thus allowing us to colour and remove many sets while only shrinking $\fr C$ moderately.\footnote{For an alternative entropic viewpoint, we could imagine that an adversary has chosen a $(2,k)$-colouring of $\mc F$, which we seek to determine by asking a series of questions.  Our goal will then be to limit the amount of information we receive, as we can then bound the number of $(2,k)$-colourings from which the adversary could choose her colouring.} \\

\noindent \emph{Branching from a coloured vertex}:  Recall that $\eps = 1/(500k^2)$ and $\omega = 4k \log(1/\eps) / \eps$.  Assume some set $A \in \mc F$ has been coloured, say red, and there is some index $\ell$ such that $d^+(A, \mc A_{\ell}) > \omega$ (or $d^-(A, \mc A_{\ell}) > \omega$).  Because $\sum_{j \ge 0} \left(1 - 2^{-\eps} \right) 2^{- \eps j} = 1$, one of the two following statements must hold:
\begin{itemize}
	\item[(i)] In at least a $(1 - 2^{-\eps})$-fraction of the colourings in $\fr C$, all sets in $N^+(A, \mc A_{\ell})$ (respectively, $N^-(A, \mc A_{\ell})$) are all coloured blue.  Note that this corresponds to the $j=0$ summand above.
	\item[(ii)] There exists an integer $j \ge 1$ such that in at least a $(1 - 2^{-\eps})2^{- \eps j}$-fraction of all colourings in $\fr C$, the $j$th set with respect to $\prec$ in $N^+(A, \mc A_{\ell})$ (respectively, $N^-(A, \mc A_{\ell})$), say $A_{\ell}$, is the first red set in $N^+(A, \mc A_{\ell})$ (respectively, $N^-(A, \mc A_{\ell})$).
\end{itemize}
In the first case, we can colour all the sets in $N^+(A, \mc A_{\ell})$ (respectively, $N^-(A, \mc A_{\ell})$) blue, remove them from $\mc A_{\ell}$ and place them in $\mc R$, and restrict $\fr C$ to the $(1 - 2^{-\eps})\card{\fr C}$ colourings where these sets are all blue.  In the latter case, we colour $B$ red and the preceding sets in $N^+(A, \mc A_{\ell})$ (respectively, $N^-(A, \mc A_{\ell})$) blue, remove these sets to $\mc R$, and restrict $\fr C$ to the remaining $(1 - 2^{-\eps}) 2^{- \eps j}|\fr C|$ colourings. \\

We can piece together these branching steps into the following operations. \\

\noindent \emph{Branching up}:  Let $A$ be a coloured set in $\mc F$, and let $\ell \ge k-1$ be an integer such that $d^+(A, \mc A_{\ell}) > \omega$ and $d^+(F, \mc A_{i-1}) > \omega$ for all $2 \le i \le \ell$ and $F \in \mc A_i$.  We branch from $A$ to its supersets in $\mc A_{\ell}$.  When case (ii) occurs, we find the superset $A_{\ell} \in \mc A_{\ell}$ which often has the same colour as $A$, we iterate, branching from $A_{\ell}$ to its supersets in the next level. Repeating this to get a chain $A \subset A_{\ell} \subset A_{\ell +1} \subset \ldots$ until case (i) occurs, with all sets in the next level having the opposite colour. \\

\noindent \emph{Branching down}:  Let $A$ be a coloured set in $\mc F$ with $d^-(A, \mc A_1) > \omega$, and suppose $d^-(F, \mc A_{i+1}) > \omega$ for all $1 \le i \le k-2$ and $F \in \mc A_i$.  Branching down works exactly like branching up, except we consider subsets in the lower level, instead of supersets from the level above. \\

\noindent \emph{Branching up and down}:  Suppose some $2 \le i_0 \le k-2$ has the property that $d^+(F, \mc A_{i-1}) > \omega$ for every $2 \le i \le i_0$ and $F \in \mc A_i$, and $d^-(F, \mc A_{i+1}) > \omega$ for every $i_0 + 1 \le i \le k-2$ and $F \in \mc A_i$.  Let $A$ be a coloured set with $d^+(A, \mc A_{i_0}) > \omega$ and $d^-(A, \mc A_{i_0+1}) > \omega$.  We first branch up from $A$, passing through its supersets in $\mc A_{i_0}$, and continuing through the higher parts.  If we have not encountered a monochromatic neighbourhood by the time we reach $\mc A_1$, we branch down from $A$, considering its subsets in $\mc A_{i_0 + 1}$, and then continuing through the lower parts.  We can also branch up and down from a coloured pair $A \subset B$ in $\mc A_{i_0 + 1}$ if $d^+(B, \mc A_{i_0}) > \omega$ and, as supposed, $d^-(A, \mc A_{i_0+2}) > \omega$.  We then branch up through supersets of $B$ in higher parts, and then branch down through subsets of $A$ in lower parts. \\

Note that with each branching step in these operations, whenever we encounter case (ii), we find a longer monochromatic chain involving the initial set $A$.  Thus these operations must terminate within $k-1$ branching steps.  The following lemma quantifies the outcome of a branching operation.

\begin{lemma}\label{lem:branch-colour}
If, when branching up, down or up and down from a coloured set $A$ or a pair $A \subset B$, $t$ further sets are added to $\mc R$, then $\fr C$ (at the end of the operation) shrinks  by a factor of at most $\tfrac16 2^{\eps t}$ (comparing to $\fr C$ at the beginning of the operation).
\end{lemma}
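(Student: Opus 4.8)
The plan is to track how the set $\fr C$ of colourings under consideration shrinks over the course of a single branching operation, which by the remarks preceding the lemma consists of at most $k-1$ branching steps (each step being one application of the ``branching from a coloured vertex'' procedure). I would write the operation as a sequence of steps $1, 2, \ldots, s$ with $s \le k-1$, and let $t_m$ denote the number of sets removed to $\mc R$ in step $m$, so that $t = \sum_{m=1}^s t_m$. At step $m$ we branch from the current coloured set (the initial set $A$, or one of the $A_\ell$ discovered so far, or one of the two sets of the pair when branching up and down) to its up- or down-neighbourhood in some level $\mc A_\ell$, which by hypothesis has size greater than $\omega$.

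The key observation is to read off the shrinkage factor at each step directly from the description of ``branching from a coloured vertex''. In case (i), we remove all $t_m := d^{\pm}(A, \mc A_\ell) > \omega$ sets in the neighbourhood to $\mc R$ and restrict $\fr C$ to a $(1 - 2^{-\eps})$-fraction; since $t_m > \omega$, we have $1 - 2^{-\eps} \ge 2^{-1} \cdot 2^{\eps t_m} \cdot \bigl(2 (1-2^{-\eps}) 2^{-\eps t_m}\bigr)$, and I would check that $\omega = 4k\log(1/\eps)/\eps$ is large enough that $2(1-2^{-\eps})2^{-\eps\omega} \le 1$, so the restriction fraction is at least $\tfrac12 2^{\eps t_m}$ — wait, that inequality goes the wrong way, so more carefully: in case (i) we keep a fraction $1 - 2^{-\eps} \ge \tfrac12 \ge \tfrac12 \cdot 2^{-\eps t_m} \cdot 2^{\eps t_m}$, and since case (i) is terminal, crudely $1-2^{-\eps} \ge \tfrac13 2^{\eps \cdot 0}$ would suffice if $t_m$ contributed nothing, but we actually want the bound $\tfrac13 2^{\eps t}$ overall; the point is that the large value of $\omega$ makes $2^{-\eps t_m}$ negligible against the constant fraction $1-2^{-\eps}$, so each step's true keep-fraction is at least $(1-2^{-\eps}) 2^{-\eps t_m} \cdot 2^{\eps t_m} \ge c \cdot 2^{-\eps t_m}$ for an absolute constant, and in case (ii) the keep-fraction is exactly $(1-2^{-\eps})2^{-\eps j}$ where $j \le t_m$ (the $j$th set is the first red one, so $j-1$ preceding sets plus possibly the set $B$ itself are removed, giving $t_m \ge j$ when $B$ stays, or we account for $B$ separately). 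So in all cases the keep-fraction at step $m$ is at least $(1 - 2^{-\eps}) 2^{-\eps t_m}$.

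Multiplying over the at most $k-1$ steps, the total keep-fraction is at least $(1-2^{-\eps})^{k-1} \prod_m 2^{-\eps t_m} = (1-2^{-\eps})^{k-1} 2^{-\eps t}$, so $\fr C$ shrinks by a factor of at most $(1-2^{-\eps})^{-(k-1)} 2^{\eps t}$. It then remains to check the numerical bound $(1-2^{-\eps})^{-(k-1)} \le 6$. Using $\eps = 1/(500k^2)$ and $k \ge 2$, one has $1 - 2^{-\eps} \ge \eps \ln 2 \cdot (1 - \eps) \ge \tfrac12 \eps$ say, so $(1-2^{-\eps})^{-(k-1)} \le (2/\eps)^{k-1} = (1000k^2)^{k-1}$, which is far larger than $6$ — so this naive multiplication is too lossy and I must instead track more carefully that the number of steps that actually invoke case (i) or case (ii) with a genuine restriction is controlled, or absorb the $(1-2^{-\eps})^{-1}$ factors into the $2^{\eps t}$ term. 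The honest fix: in case (ii) we have $t_m \ge j \ge 1$, so $2^{-\eps t_m} \le 2^{-\eps j}$ and the keep-fraction $(1-2^{-\eps})2^{-\eps j} \ge (1-2^{-\eps}) 2^{-\eps t_m} \ge \tfrac12 \cdot 2^{-\eps t_m}$ only when... hmm, actually each step removes at least one set unless it is case (i), and case (i) occurs at most once (it is terminal), and in case (i) $t_m > \omega$ so $2^{\eps t_m} > 2^{\eps \omega} = (1/\eps)^{4k} \ge (1-2^{-\eps})^{-(k-1)}$, which absorbs all the accumulated $(1-2^{-\eps})^{-1}$ factors with room to spare. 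So I would split into: if the operation ends in case (i), use the huge $2^{\eps t_m}$ from that final step to absorb everything and get a factor at most $\tfrac16 2^{\eps t}$; if it only ever uses case (ii) (at most $k-1$ times, each keeping fraction $\ge (1-2^{-\eps})2^{-\eps j_m}$ with $\sum j_m \le t$), bound $(1-2^{-\eps})^{-(k-1)} \le 6$ — no, that still fails. The genuine main obstacle is precisely this bookkeeping: I expect the intended argument is that $\omega$ is chosen exactly so that $2^{\eps\omega} \ge 6$ and the operation, being at most $k-1$ branching steps with the combinatorial structure forcing at least one "large" neighbourhood, yields enough slack; concretely $\eps \omega = 4k\log(1/\eps) \ge 4k \cdot \log 500 \ge 36k$, so $2^{\eps\omega}$ is astronomically larger than $6 = 6 \cdot 2^0$, and one shows by induction on the number of steps that the shrink factor is at most $\tfrac16 2^{\eps t}$, the base case and the inductive step each using that every branching step either contributes a factor $\ge 2^{\eps\omega}$ (case (i)) which alone beats $6$, or contributes $\ge (1-2^{-\eps}) 2^{-\eps t_m}$ with $t_m\ge 1$ where the product of up to $k-1$ such $(1-2^{-\eps})^{-1}$ factors is at most $2^{\eps\omega}/6$ by the size of $\omega$. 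That induction, carefully stated, is the step I would spend the most care on.
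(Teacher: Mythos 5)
You have located the correct mechanism, and it is the same one the paper uses: the terminal case-(i) step dumps more than $\omega$ sets into $\mc R$, so $t \ge j_1 + \cdots + j_s + \omega$, and the resulting factor $2^{-\eps\omega} = \eps^{4k}$ is more than enough to absorb the at most $k-1$ factors of $(1-2^{-\eps})^{-1} \le 2^{\eps}\eps^{-2}$ accumulated along the way (the paper's computation is exactly $2^{\eps(t-\omega)}(2^{\eps}\eps^{-2})^{s+1} \le 2^{\eps t}(\eps^2 2^{\eps})^k \le \tfrac16 2^{\eps t}$, using $2^x \ge 1+x^2$ on $[0,1]$ and $s+1<k$). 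But as written your proposal is not a finished proof: it ends by deferring the bookkeeping to an induction you have not carried out, and, more importantly, it leaves explicitly unresolved the branch ``if the operation only ever uses case (ii)'', where you correctly observe that the naive product bound $(1-2^{-\eps})^{-(k-1)} \le 6$ fails.

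That unresolved branch is vacuous, and you need to say why. A branching operation terminates, by construction, exactly when case (i) occurs; and case (i) must occur within $k-1$ branching steps, because each case-(ii) step extends a monochromatic chain through the initial set $A$ (or the pair $A \subset B$) by one set, and a positive fraction of the $(2,k)$-colourings in $\fr C$ cannot contain a monochromatic $k$-chain. Hence every operation consists of $s \le k-2$ case-(ii) steps followed by one case-(i) step, the inequality $t \ge j_1 + \cdots + j_s + \omega$ holds unconditionally, and no case split or induction is needed --- the single product computation above closes the argument. Supplying that one observation turns your sketch into the paper's proof; without it, the claimed bound is not established for the scenario you yourself flagged.
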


\begin{proof}
Assume we encounter case (ii) $s$ times before encountering case (i) and ending the branching operation.  As we argued previously, there can be at most $k-1$ branching steps, and therefore we must have $s \le k-2$.  Let $j_1, j_2, \hdots, j_s$ be the indices of the monochromatic neighbours given in each instance of case (ii).  The number of sets moved to $\mc R$ is then $t \ge j_1 + j_2 + \hdots + j_s + \omega$. Moreover, the set of $(2,k)$-colourings $\fr C$ shrinks by a factor of at most
\begin{align*}
\left((1-2^{-\eps})\prod_{i=1}^s (1-2^{-\eps})2^{-\eps j_i}\right)^{-1} &= 2^{\eps(j_1+\hdots + j_s)}(1-2^{-\eps})^{-s-1} \le 2^{\eps (t - \omega)} \left( \frac{ 2^{\eps}}{2^{\eps} - 1} \right)^{s+1} \\
&\le 2^{\eps( t - \omega)} \left( 2^{\eps} \eps^{-2} \right)^{s+1} < 2^{\eps(t - \omega + k)} \eps^{-2k} = 2^{\eps t} ( \eps^2 2^{\eps} )^k \le \tfrac16 2^{\eps t},
\end{align*}
where the first inequality holds since $2^x \ge 1 + x^2$ for every $x \in [0, 1]$, the second inequality since $s + 1 < k$, and the following equality is due to the fact that $\omega = 4k \log(1 / \eps) / \eps$.
\end{proof}

Having defined these branching operations, we are now in a position to specify how we obtain the partition of Proposition~\ref{prop:kchainstructure}.  The procedure consists for four stages.

\paragraph{Stage I.}

The goal of this stage is to compress the $2k-2$ antichains in~\eqref{eqn:initialPart} into $k-1$ parts $\mc A_1, \hdots, \mc A_{k-1}$.  These will no longer necessarily be antichains, but we shall ensure no set is contained in more than $\omega$ other sets from its own part.  We do this greedily, shifting a set up to a higher part whenever possible, and then using branching operations when needed. \\

\noindent \emph{Shifting up}: Running $i$ from $2$ to $2k-2$, consider the sets in $\mc A_i$ in the reverse of the predetermined linear order $\prec$.  When the set $F$ is being considered, if $d^{+}(F,\mc A_{i-1}) \le \omega$, move $F$ up to $\mc A_{i-1}$. \\

We repeatedly run the shifting up operation until no set is moved.  At this point, every set is contained in more than $\om$ sets from the part directly above. As a consequence, if $\mc A_i$ is non-empty for some $i \ge k$, the sets in $\mc A_i$ are in many $k$-chains.  We shall use this fact to colour and remove many sets efficiently via the following operation. \\

\noindent \emph{Colouring and branching up}: Let $\ell =\max\{i:\mc A_i\ne \emptyset\}$. If $\ell \ge k$, let $F_1 \in \mc A_{\ell}$ be the first set in $\mc A_{\ell}$ with respect to $\prec$. Colour $F_1$ with whichever colour occurs most frequently in $\fr C$ (breaking ties arbitrarily), say red.  Restrict $\fr C$ to those colourings where $F_1$ is red and move $F_1$ to $\mc R$. We then branch up from $F_1$ to its supersets in $\mc A_{\ell - 1}$, and onwards through higher parts. \\

After running through this operation, the sets removed may leave space from some lower sets to be shifted up.  Hence we repeat this sequence of operations until $\cup_{i \ge k} \mc A_i$ is empty, which marks the end of Stage I. At this point, we are left with the partition $\mc F = \mc A_1 \cup \hdots \cup \mc A_{k-1} \cup \mc R$, with the parts $\mc A_i$ having the following two qualities in addition to (Q1) from before.

\begin{itemize}
	\item[(Q2)] If $F \in \mc A_i$, then $d^{+}(F,\mc A_i) \le \omega$.
	\item[(Q3)] If $F \in \mc A_i$ for some $2 \le i \le k-1$, then  $d^{+}(F,\mc A_{i-1}) > \omega$.
\end{itemize}

The following lemma shows these attributes do indeed hold after Stage I, and that we do not restrict our set of colourings $\fr C$ too greatly.

\begin{lemma} \label{lem:end-I}
At the end of Stage I, we have the partition $\mc F=\mc A_1\cup \hdots \cup \mc A_{k-1}\cup \mc R$ with the qualities (Q1)--(Q3).  Moreover, the set $\fr C$ of colourings shrinks by a factor of at most $2^{\eps \card{\mc R}}$.
\end{lemma}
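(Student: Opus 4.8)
The plan is to verify the three qualities (Q1)--(Q3) hold at the termination of Stage~I, and to track the shrinkage of $\fr C$ across all the operations performed in this stage. First I would address why the stage terminates at all: each \emph{shifting up} operation strictly decreases the quantity $\sum_{F \in \mc F} i(F)$, where $i(F)$ is the index of the part containing $F$, so only finitely many shifts occur between colour-and-branch steps; and each \emph{colouring and branching up} operation moves at least one set (namely $F_1$, and possibly more) permanently into $\mc R$, so there can be at most $\card{\mc F}$ such operations. Hence Stage~I halts after finitely many steps.

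Next I would check the qualities. Quality (Q1) is preserved by every operation: \emph{shifting up} moves a set $F$ from $\mc A_i$ to $\mc A_{i-1}$ only when $d^+(F,\mc A_{i-1}) \le \omega$, but in fact one needs to argue that $F$ is not contained in any set of a part with smaller index, which follows because (Q1) held before the move and $F$ was a candidate in $\mc A_i$; moving it up one level keeps it below everything it was below. The branching operations only remove sets to $\mc R$, which cannot violate (Q1) for the remaining parts. Quality (Q2) holds at termination because \emph{colouring and branching up} is repeated until $\cup_{i \ge k}\mc A_i = \emptyset$, and within each round we keep running \emph{shifting up} until no set moves; once \emph{shifting up} is stable, every set $F \in \mc A_i$ with $i \ge 2$ has $d^+(F, \mc A_{i-1}) > \omega$ (otherwise it would have been shifted), which is exactly (Q3); and (Q2) --- bounding $d^+(F,\mc A_i)$ within the \emph{same} part --- I would deduce from the branching-up operation in \emph{colouring and branching up}: whenever we branch up from $F_1$, the hypothesis $d^+(F,\mc A_{i-1}) > \omega$ for all $F$ in higher parts (guaranteed by (Q3) being stable) is what licenses the branch, and the sets moved to $\mc R$ are precisely those causing large up-degree inside a part. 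I should be careful here: the cleanest argument is that after the final \emph{shifting up}, for any $F \in \mc A_i$ the sets of $\mc A_i$ containing $F$ form a chain of length at most... — actually (Q2) follows because if $d^+(F,\mc A_i) > \omega$ then, since $\prec$ is a linear extension, $F$ together with its $\omega$ superset-neighbours in $\mc A_i$ would have been broken up; more precisely, whenever $\mc A_{\ell}$ with $\ell \ge k$ is nonempty we colour-and-branch, and the invariant maintained is that in parts $\mc A_1,\dots,\mc A_{k-1}$ no set has up-degree exceeding $\omega$ within its own part, because such a configuration would have triggered a branch. I would state this as the key structural invariant and verify it is preserved by each operation type.

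For the colouring-count, I would sum the shrinkage factors. \emph{Shifting up} never touches $\fr C$. The \emph{colouring and branching up} operation first fixes the colour of $F_1$ to the majority colour and restricts $\fr C$, costing a factor of at most $2$, moving one set to $\mc R$; then the branching-up phase, by Lemma~\ref{lem:branch-colour} applied with initial coloured set $F_1$, shrinks $\fr C$ by at most $\tfrac16 2^{\eps t}$ where $t$ is the number of sets added to $\mc R$ during that branch. The combined factor for one colour-and-branch operation is therefore at most $2 \cdot \tfrac16 2^{\eps t} \le 2^{\eps t'}$ where $t' = t+1$ is the total number of sets (including $F_1$) added to $\mc R$ in this operation, using $2 \cdot \tfrac16 = \tfrac13 \le 2^{\eps}$. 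Since the sets added to $\mc R$ across the (disjoint) operations partition the final $\mc R$, multiplying these bounds gives a total shrinkage of at most $\prod 2^{\eps t'_j} = 2^{\eps \sum_j t'_j} = 2^{\eps \card{\mc R}}$, as claimed.

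The main obstacle I anticipate is the bookkeeping for (Q2): making rigorous the claim that the interleaving of \emph{shifting up} and \emph{colouring and branching up} leaves no set with large up-degree inside its own part. One has to argue that $\mc A_i$ acquires a large within-part up-degree only transiently (between a shift and the next branch), and that the loop structure --- run shifting to stability, then branch, repeat until $\cup_{i\ge k}\mc A_i=\emptyset$ --- guarantees this is resolved by the time the stage ends. I would handle this by maintaining the explicit invariant "after every completed \emph{colouring and branching up} followed by shifting-to-stability, qualities (Q1) and (Q3) hold, and (Q2) holds for every part," and checking it is restored at the end of each loop iteration; the delicate point is that a branch can move a superset $G \supset F$ out of $\mc A_i$ into $\mc R$ and thereby \emph{reduce} within-part up-degrees rather than increase them, so (Q2) can only fail through a shift, which is then repaired in the next shifting-to-stability phase before the loop exits.
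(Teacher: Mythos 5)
Your termination argument, your verification of (Q3), and your count of the shrinkage of $\fr C$ (a factor of at most $2\cdot\tfrac16 2^{\eps t}\le 2^{\eps(t+1)}$ per colouring-and-branching operation, multiplied over the operations whose contributions to $\mc R$ are disjoint) are correct and essentially identical to the paper's. The gaps are in (Q1) and (Q2).

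For (Q1): after shifting $F$ from $\mc A_i$ up to $\mc A_{i-1}$, the configuration to worry about is a superset $G\supset F$ still sitting in $\mc A_i$, so that $F$, now in a lower-indexed part, is contained in a set of a higher-indexed part. Your justification (``moving it up one level keeps it below everything it was below'') does not address this, and you state the wrong direction (``not contained in any set of a part with smaller index'' is not what (Q1) forbids). The paper kills this case using two facts you never invoke: $d^{+}(G,\mc A_{i-1})\le d^{+}(F,\mc A_{i-1})\le\omega$, since every superset of $G$ is a superset of $F$; and $F\prec G$, so in the reverse-of-$\prec$ processing order $G$ is examined \emph{before} $F$ and would already have been shifted to $\mc A_{i-1}$. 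Without this, (Q1) is not established, and (Q1) is load-bearing for what follows.

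For (Q2): both mechanisms you propose are wrong. The colouring-and-branching operation is triggered by $\mc A_{\ell}\ne\emptyset$ for $\ell\ge k$, not by a large within-part up-degree, so it does not ``break up'' a set with many supersets in its own part. And shifting-to-stability enforces (Q3) (up-degree into the part \emph{above}), not (Q2): a set $F\in\mc A_i$ with $d^{+}(F,\mc A_i)>\omega$ would not be repaired by further shifting, because the shift condition tests $d^{+}(F,\mc A_{i-1})$, not $d^{+}(F,\mc A_i)$. So your fallback claim that a (Q2) failure ``is then repaired in the next shifting-to-stability phase'' is unjustified. The correct argument is that (Q2) never fails: consider the first moment some $F\in\mc A_i$ has $d^{+}(F,\mc A_i)>\omega$. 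The parts start as antichains, so the violation must be created by a move into $\mc A_i$; either $F$ itself was just shifted in, which is impossible because the shift requires $d^{+}(F,\mc A_i)\le\omega$ at that moment, or some $F'\supset F$ was shifted from $\mc A_{i+1}$ into $\mc A_i$, which is impossible because $F\in\mc A_i$, $F'\in\mc A_{i+1}$ and $F\subset F'$ would already have violated (Q1). Note that (Q2) thus rests on (Q1), which is a further reason the (Q1) step must be done properly.
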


\begin{proof}
We first prove by contradiction that our algorithm preserves the monotonicity of (Q1). Suppose we are at the step when monotonicity is violated, and $(A,B)\in \mc A_i\times\mc A_j$ is the pair with $i<j$ and $A\subset B$. It must be that $j=i+1$ and $A$ has been moved from $\mc A_j$ to $\mc A_i$ in the previous step via a shifting up operation, which implies $d^{+}(A,\mc A_i) \le \omega$.  However, since $A \subset B$, we must have $d^+(B, \mc A_i) \le \omega$ as well, and $A \prec B$.  This means that $B$ would have been moved from $\mc A_j$ to $\mc A_i$ before $A$, a contradiction.

We proceed by showing that (Q2) is maintained at every step of Stage I. If this is not true, let us look closer at the first time when there exists $F\in \mc A_i$ with $d^{+}(F,\mc A_i)>\om$. Initially, $\mc A_j$ is an antichain for every $j \ge 1$, so (Q2) holds at the beginning of Stage I. Hence either $F$ or a set $F'$ in $N^{+}(F,\mc A_i)$ has been moved up from $\mc A_{i+1}$ to $\mc A_i$ in the previous step. The former case cannot happen because we only shift up if $d^{+}(F,\mc A_i)\le\om$. On the other hand, by (Q1), we could not have had $F\in \mc A_i$ and $F'\in \mc A_{i+1}$ for any $F'\supset F$, and so the latter case is ruled out as well.

As Stage I stops only when we can no longer apply the shifting up operation, our partition satisfies (Q3) at the end of Stage I.

Finally, consider the colouring and branching up operations that we perform.  When we colour the first set $F_1$, we choose the colour it most frequently receives in the colourings of $\fr C$, which causes $\fr C$ to shrink by a factor of at most two.  By Lemma~\ref{lem:branch-colour}, if $t$ more sets are added to $\mc R$ by this operation, $\fr C$ shrinks by a further factor of at most $\tfrac16 2^{\eps t}$.  Hence when $\mc R$ grows by a total of $t'$ sets, $\fr C$ shrinks by a factor of at most $2^{\eps t'}$, thus ensuring that $\fr C$ is at most $2^{\eps \card{\mc R}}$ times smaller by the end of Stage I.
\end{proof}

\paragraph{Stage II.}
The previous stage gave us good control, through (Q1)--(Q3), of the number of sets a given set is contained in. The goal of this stage is to obtain similar control over the number of sets a given set \emph{contains}. More precisely, at the end of Stage II, we shall have a partition $\mc F=\bigcup_{i\in [k-1]}\left(\mc A_i\cup\mc U_i \cup \mc D_i \right)\cup \mc R$ that has the three following characteristics in addition to (Q1)--(Q3).
\begin{itemize}
	\item[(Q4)] If $F \in \mc A_i$, then $d^{-}(F,\mc A_i)\le \omega$.
	\item[(Q5)] If $F \in \mc A_i$ for some $1 \le i \le k-2$, then $d^{-}(F,\mc A_{i+1})>\omega$.
	\item[(Q6)] If $U \in \mc U_i$ for some $2 \le i \le k-1$, then $\max \left\{ d(U, \mc A_i), d(U, \mc A_{i-1}) \right\} \le 2 \omega$.  If $D \in \mc D_i$ for some $1 \le i \le k-2$, then $\max \left\{ d(D, \mc A_i), d(D, \mc A_{i+1}) \right\} \le 2 \omega$.  Moreover, $\mc U_1 = \mc D_{k-1} = \emptyset$.
\end{itemize}

Stage II consists of two substages, of which we now specify the first.

\paragraph{Stage IIa.}
Set $\mc U_i=\mc D_i=\emptyset$ for every $i\in [k-1]$. We process the sets in $\mc A$ one-by-one, starting with the sets in $\mc A_{k-1}$ and working up to the sets in $\mc A_1$.  Within a part $\mc A_i$, we consider the sets according to the order $\prec$.  Assume we are currently considering a set $F \in \mc A_i$.  There are three possibilities.

{
\narrower
\paragraph{Case 1:} $d^-(F, \mc A_i) \le \omega$ and, if $i \le k-2$, $d^-(F, \mc A_{i+1}) > \omega$.

Since $F$ has qualities (Q4) and (Q5), we leave $F$ in place and proceed to the next set.

\paragraph{Case 2:} $i \le k-2$, $d^{-}(F, \mc A_i) \le \omega$ and $d^{-}(F, \mc A_{i+1}) \le \omega$.

In this case, $F$ has quality (Q4), not containing many sets from its own part.  However, it fails (Q5), as it also doesn't contain many sets from the level below.  Hence we remove it from $\mc A_i$, and place it in the part $\mc D_i$ instead, to indicate it is \emph{down-sparse}.  We then proceed to the next set.

\paragraph{Case 3:} $d^-(F, \mc A_i) > \omega$.

In the final case, $F$ contains many sets from its own part.  It is therefore contained in several $k$-chains (see Remark~\ref{rmk:case3} below), which we can use to efficiently colour and remove sets.  However, this can cause some of the previously established qualities to be violated, and thus we must restart the stage after the branching operation.  We explain in more detail below. \\

\noindent \emph{Colouring and branching up and down}: Colour $F$ with its most frequent colour in $\fr C$, say red.  Restrict $\fr C$ to those colourings where $F$ is red and remove $F$ to $\mc R$. We then apply the branching up and down operation from $F$ with $i_0 = i-1$. 

\begin{rmk}\label{rmk:case3}
Note that for any set $F' \supset F$ we must have $F \prec F'$, and therefore $F'$ is yet to be considered, and is thus still in $\mc A$. This, together with (Q3), implies we can branch up.  On the other hand, for a set $F' \subset F$, we have $F' \prec F$, and would therefore already have considered $F'$ in Stage IIa. If $F'$ is still in $\mc A_j$ for some $i \le j \le k-2$ (in particular, if $F$ falls under Case 3), $F'$ must have fallen under Case 1, and thus contains more than $\omega$ sets in $\mc A_{j+1}$, thereby ensuring that the branching down part of the operation can also be carried out.\footnote{In particular, note that if $\mc A_{k-1}$ becomes empty at some point, then the first set we consider, which is minimal in its part $\mc A_i$, would fall under Case 2, and thus be removed to $\mc D_i$. This would then continue to occur for each subsequent set, and so at the end of Stage IIa, all parts $\mc A_i$ would be empty, with their members having been placed in the parts $\mc D_i$ instead.}
\end{rmk}

However, when we remove sets in the branching operation, we could destroy the qualities (Q3) and (Q5) of sets that we have already considered, as the up- and down-degrees could decrease, and hence we must restore these.  For $1 \le i \le k-1$, we return any sets in $\mc D_i$ to the part $\mc A_i$.  Note that sets which were coloured and removed to $\mc R$ are \emph{not} returned.  With $\mc D$ once again empty, we shift sets up to restore the quality (Q3), just as we did in Stage I. \\

\noindent \emph{Shifting up}: Starting from $i = 2$ and running through to $i = k-1$, consider the sets in $\mc A_i$ in the reverse of the order $\prec$.  When dealing with $F \in \mc A_i$, if $d^+(F, \mc A_{i-1}) \le \omega$, we move $F$ to the part $\mc A_{i-1}$.  Repeat this procedure until no sets are moved. \\

After the shifting up procedure terminates, we have again restored (Q1)--(Q3).  At this point, we can restart Stage II, aiming to ensure (Q4) and (Q5) are satisfied as well.  Hence, to end Case 3, we return to the first set under $\prec$ in $\mc A_{k-1}$ and restart Stage IIa. \\

}

Stage IIa ends once we have gone through all the sets in $\mc A$ without encountering Case 3 (which would cause us to restart the stage), after which we proceed to the second part of Stage II.  Observe that $\mc R$ grows monotonically with each iteration, and hence we can only restart Stage IIa a finite number of times before moving on to Stage IIb.

\paragraph{Stage IIb.} The partition $\mc F=\bigcup_{i\in [k-1]}\left(\mc A_i\cup\mc D_i\right)\cup \mc R$ at the end of Stage IIa may no longer have (Q3), as supersets of sets could have been removed from $\mc A$. To fix this issue, we move any sets in $\mc A$ that violate either (Q3) or (Q5) to $\mc U$ or $\mc D$ respectively. \\

\noindent \emph{Moving to $\mc U \cup \mc D$}:  We process the sets in $\mc A$ according to the order $\prec$.  Given $F \in \mc A_i$, if $i \ge 2$ and $d^+(F, \mc A_{i-1}) \le \omega$, move $F$ to $\mc U_i$.  If $i \le k-2$ and $d^-(F, \mc A_{i+1}) \le \omega$, move $F$ to $\mc D_i$. Repeat this process until no further sets are moved. \\

We repeatedly apply this operation until no sets are moved, which marks the end of Stage IIb and, with it, Stage II.  Lemma~\ref{lem:end-II} sumarises the effects of Stage II on our partition, but we first introduce some terminology that we will use in the remainder of this section.

\begin{dfn}\label{def:prosp-nhd}
Given $i\in [k-1]$ and a subfamily $\mc B_i$ of $\mc U_i \cup \mc D_i \cup \mc P_i$, the \emph{right-to-left order} of the members of $\mc A_i \cup \mc B_i$ is obtained by placing from right to left the sets of $\mc B_i$ in the order in which they entered $\mc B_i$, followed by elements of $\mc A_i$ according to the order $\prec$.  Moreover, consider the moment some set $F$ is (last\footnote{Note that $\mc P_i = \emptyset$ throughout Stage II.  Furthermore, the only way a set can leave $\mc U \cup \mc D \cup \mc P$ is through Case 3 of Stage IIa.  That apart, if a set is moved into one of these parts, it will remain there.}) moved to $\mc U_i\cup\mc D_i\cup\mc P_i$. The \emph{prospective left-neighbourhood} of $F$, denoted by $N_{\textup{pr}}(F)$, is defined to be $N(F,\mc A_i\cup\mc A_{i-1})$ if $F\in \mc U_i$, $N(F,\mc A_i\cup\mc A_{i+1})$ if $F\in \mc D_i$, and $N(F,\mc A_i)$ if $F\in\mc P_i$, where the parts $\mc A_{i-1}, \mc A_i$ and $\mc A_{i+1}$ are taken as they were during the step when $F$ was removed from $\mc A$.  Even though these parts could shrink during subsequent steps of the process, we will \emph{not} update $N_{\textup{pr}}(F)$ correspondingly. 
\end{dfn}

\begin{lemma}\label{lem:end-II}
At the end of Stage II, the following statements are true.
\begin{itemize}
\item[\rm (i)] The partition $\mc F=\bigcup_{i\in [k-1]}\left(\mc A_i\cup\mc U_i\cup\mc D_i\right)\cup \mc R$ has the qualities (Q1)--(Q6).
\item[\rm (ii)] For every $F \in \mc U \cup \mc D$, $\card{N_{\textup{pr}}(F)} \le 3 \omega$.  Moreover, suppose $\mc B_i \in \left\{ \mc U_i \cup \mc D_i, \mc U_i \cup \mc D_{i-1}, \mc D_i \cup \mc U_{i+1} \right\}$.  If $F \in \mc B_i$, then the left-neighbourhood of $F$ with respect to the right-to-left order of the members of $\mc A_i \cup \mc B_i$ is a subfamily of $N_{\textup{pr}}(F)$.
\item[\rm (iii)] If $t$ sets are coloured and removed during this stage, the family $\fr C$ of colourings shrinks by a factor of at most $2^{\eps t}$.
\end{itemize}
 
\end{lemma}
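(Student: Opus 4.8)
The plan is to prove the three assertions in the stated order, treating (i) and (iii) as bookkeeping over the operations making up Stage~II and reserving the real work for the second half of (ii). Observe first that Stage~II terminates: Stage~IIa can be restarted only finitely often since $\mc R$ grows each time, and the \emph{moving to $\mc U\cup\mc D$} step of Stage~IIb terminates since $\mc A$ only shrinks, so ``the end of Stage~II'' is well defined.

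\emph{Assertion (i).} I would check each of (Q1)--(Q6) against each operation of Stage~II: \emph{shifting up}, the Case-3 \emph{colour-and-branch} step, \emph{returning $\mc D$ to $\mc A$}, and \emph{moving to $\mc U\cup\mc D$}. Monotonicity (Q1) is argued exactly as in Lemma~\ref{lem:end-I}: shifting up in reverse $\prec$-order from the top cannot create a downward comparability into a lower-indexed part, and deleting sets (to $\mc R$ or to $\mc U\cup\mc D$) creates no new comparabilities among the $\mc A_j$. Quality (Q2) is re-established by shifting up as in Stage~I, and since every later operation only deletes sets from the $\mc A_j$, up-degrees only fall, so it persists. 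Quality (Q3) is restored by the shifting-up step after each Case-3 branch, and holds at the end of Stage~IIb since otherwise \emph{moving to $\mc U\cup\mc D$} would still apply. For (Q4): any $F\in\mc A_i$ with $d^-(F,\mc A_i)>\omega$ triggers Case~3 and is removed, so during the last (Case-3-free) pass of Stage~IIa every processed set, hence every $F$ remaining in $\mc A_i$, has $d^-(F,\mc A_i)\le\omega$; Stage~IIb only deletes sets, so this survives. Dually, (Q5) holds at the end of Stage~IIb, since any $F\in\mc A_i$ with $i\le k-2$ and $d^-(F,\mc A_{i+1})\le\omega$ would have been moved to $\mc D_i$. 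Finally (Q6): when a set $U$ is last moved to $\mc U_i$ it lies in $\mc A_i$, so $d^{+}(U,\mc A_i),d^{-}(U,\mc A_i)\le\omega$ by (Q2) and (Q4), $d^{-}(U,\mc A_{i-1})=0$ by (Q1), and $d^{+}(U,\mc A_{i-1})\le\omega$ is precisely the condition that caused the move; the analogous computation --- using the down-sparsity condition $d^{-}(D,\mc A_{i+1})\le\omega$ and $d^{+}(D,\mc A_{i+1})=0$ --- applies to $D\in\mc D_i$. As the $\mc A_i$ only shrink after these moves, the bounds $d(U,\mc A_i)\le2\omega$, $d(U,\mc A_{i-1})\le\omega$ and their $\mc D$-analogues persist; and $\mc U_1=\mc D_{k-1}=\emptyset$ because moves go only to $\mc U_i$ with $i\ge2$ and to $\mc D_i$ with $i\le k-2$.

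\emph{Assertion (iii) and the first half of (ii).} The family $\fr C$ changes only inside a Case-3 step: colouring the chosen set $F$ with its majority colour shrinks $\fr C$ by a factor at most $2$, and the ensuing branch-up-and-down operation shrinks it by a further factor at most $\tfrac16 2^{\eps t'}$ if $t'$ sets are added to $\mc R$, by Lemma~\ref{lem:branch-colour}. So that step shrinks $\fr C$ by at most $\tfrac13 2^{\eps t'}\le 2^{\eps(t'+1)}$ while adding $t'+1$ sets to $\mc R$; multiplying over all Case-3 steps gives a total factor of at most $2^{\eps t}$, where $t$ is the number of sets coloured and removed in Stage~II, proving (iii). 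The bound $\card{N_{\textup{pr}}(F)}\le3\omega$ is the same calculation as for (Q6): at the instant $F$ is last moved to $\mc U_i$ (resp.\ $\mc D_i$) we have $d(F,\mc A_i)\le2\omega$ from (Q2) and (Q4), and $d(F,\mc A_{i-1})\le\omega$ (resp.\ $d(F,\mc A_{i+1})\le\omega$) from (Q1) and the relevant sparsity condition, and $N_{\textup{pr}}(F)$ is by definition the union of those two neighbourhoods at that instant. (At the end of Stage~II, $\mc P=\emptyset$, so the $\mc P_i$ clauses are vacuous.)

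\emph{The main obstacle: the second half of (ii).} The substantive claim is that the left-neighbourhood of $F\in\mc B_i$ in the right-to-left order on $\mc A_i\cup\mc B_i$ lies inside $N_{\textup{pr}}(F)$. The structural fact I would isolate is that during the \emph{final phase} --- the last (Case-3-free) pass of Stage~IIa together with all of Stage~IIb --- no set ever moves between two of the parts $\mc A_1,\dots,\mc A_{k-1}$: the only motions are $\mc A_j\to\mc D_j$ (Case~2) and $\mc A_j\to\mc U_j$ or $\mc A_j\to\mc D_j$ (Stage~IIb), all of which merely delete from the $\mc A_j$. Hence every set keeps a fixed $\mc A$-index from the start of the final phase until it leaves $\mc A$, and each $\mc A_j$ is non-increasing throughout. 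Now fix $F\in\mc B_i$ and a set $G$ strictly to the left of $F$ in the right-to-left order. If $G$ currently lies in $\mc A_i$, then, since $\mc A_i$ only shrank after $F$ was last removed, $G$ already lay in $\mc A_i$ at that moment, so $G\in N_{\textup{pr}}(F)$ whenever $G$ is comparable to $F$. Otherwise $G\in\mc B_i$, and by the definition of the right-to-left order $G$ entered $\mc B_i$ strictly later than $F$, so $G$ was still in $\mc A$ when $F$ was removed; both removals occur in the final phase, so $G$'s $\mc A$-index is constant between them and equals the index whose $\mc D$- or $\mc U$-part $G$ eventually joins --- namely $i$ if $G\in\mc U_i\cup\mc D_i$, $i-1$ if $G\in\mc D_{i-1}$, and $i+1$ if $G\in\mc U_{i+1}$. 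For each of the three admissible choices of $\mc B_i$ one then checks, against Definition~\ref{def:prosp-nhd}, that these are exactly the two levels appearing in $N_{\textup{pr}}(F)$ (which is $N(F,\mc A_j\cup\mc A_{j-1})$ if $F\in\mc U_j$ and $N(F,\mc A_j\cup\mc A_{j+1})$ if $F\in\mc D_j$, with $j\in\{i-1,i,i+1\}$ the level of $F$), so $G$ again lies in $N_{\textup{pr}}(F)$ when comparable to $F$. I expect the ``final phase'' structural claim, together with this finite index-matching case check, to be the main obstacle; everything else reduces to the degree bounds already built into (Q1), (Q2), (Q4) and to Lemma~\ref{lem:branch-colour}.
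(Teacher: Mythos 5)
Your proposal is correct and follows essentially the same route as the paper: checking (Q1)--(Q6) operation by operation, deriving the degree bounds in (Q6) and the bound $\card{N_{\textup{pr}}(F)}\le 3\omega$ from (Q1), (Q2), (Q4) and the sparsity conditions at the moment of removal, and invoking Lemma~\ref{lem:branch-colour} for (iii). Your explicit ``final phase'' observation for the second half of (ii) --- that after the last Case-3 restart the parts $\mc A_j$ only shrink, so every left-neighbour of $F$ was already sitting in the appropriate $\mc A_j$ when $F$ was removed --- is exactly the justification the paper compresses into a single sentence.
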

\begin{proof}
For (i), we verify the various qualities in turn, starting with (Q1) and (Q2).  At the beginning of Stage II, the partition satisfies these qualities, which can only be violated if sets are moved into parts $\mc A_i$. This only occurs during Case 3 of Stage IIa, when shifting up and when emptying $\mc D$ before restarting the stage.  As proved in Lemma~\ref{lem:end-I}, the shifting up procedure preserves (Q1) and (Q2).  Furthermore, in Stage IIa, when $\mc D \neq \emptyset$, no sets are moved within or into $\mc A$. Hence when the sets from $\mc D$ are returned to their original parts in $\mc A$, they still have (Q1) and (Q2).  This shows that these qualities hold at the end of Stage II.

As Stage IIb only terminates when (Q3) and (Q5) are satisfied for every set in $\mc A$, it is evident that these qualities hold.  (Q4) holds for every set in $\mc A$ at the end of Stage IIa, since otherwise we would have fallen in Case 3 and restarted the stage.  As sets can only be removed from $\mc A$ in Stage IIb, it follows that the degree of a set into its own part cannot increase, and therefore (Q4) still holds at the end of Stage II.

To establish (Q6), first note that we never put any sets into $\mc U_1$ or $\mc D_{k-1}$, and hence these parts must be empty.  Now suppose $i \le k-2$ and $D \in \mc D_i$, and consider the step of Stage II in which $D$ was last moved from $\mc A_i$ to $\mc D_i$.  At that time, by (Q2), we had $d^+(D, \mc A_i) \le \omega$.  Since $D$ was not coloured and removed in Stage IIa, we must also have had $d^-(D, \mc A_i) \le \omega$, and thus $d(D, \mc A_i) = d^+(D, \mc A_i) + d^-(D, \mc A_i) \le 2 \omega$.  Sets can only be added to $\mc A_i$ in Case 3 of Stage IIa, which cannot have occurred after $D$ last entered $\mc D_i$, since $\mc D_i$ would have been emptied in this case.  Hence $d(D, \mc A_i) \le 2 \omega$ at the end of Stage II as well.  Also, we would only place $D$ in $\mc D_i$ if $d^-(D, \mc A_{i+1}) \le \omega$, and by (Q1) we have $d^+(D, \mc A_{i+1}) = 0$.  Hence $d(D, \mc A_{i+1}) \le \omega$, and again this degree would not have increased in later steps.  This shows that (Q6) holds for $D$, and a similar argument applies when $i \ge 2$ and $U \in \mc U_i$.  This completes the proof of (i).

For the first assertion in (ii), note that for $F \in \mc D_i$, $N_{\textup{pr}}(F)$ is the neighbourhood of $F$ in $\mc A_i \cup \mc A_{i+1}$ at the time when $F$ was placed in $\mc D_i$.  As argued when establishing (Q6), this neighbourhood is $N^-(F, \mc A_i) \cup N^+(F, \mc A_i) \cup N^-(F, \mc A_{i+1})$, and each of these sets has size at most $\omega$.  Thus $\card{N_{\textup{pr}}(F)} \le 3 \omega$.  A very similar argument applies when $F \in \mc U_i$ instead.

For the second assertion in (ii), we consider the case $\mc B_i = \mc U_i \cup \mc D_{i-1}$, as the other two cases can be handled similarly.  Let $F \in \mc B_i$, and observe that in the right-to-left order of $\mc A_i \cup \mc B_i$, all sets to the left of $F$ were in $\mc A_i \cup \mc A_{i-1}$ when $F$ was added to $\mc B_i$.  Since, for $F \in \mc B_i$, $N_{\textup{pr}}(F)$ is defined to be precisely the neighbours of $F$ in $\mc A_i \cup \mc A_{i-1}$ at the time $F$ was removed from $\mc A$, it follows that the left-neighbours of $F$ in $\mc A_i \cup \mc B_i$ are all contained in $N_{\textup{pr}}(F)$.

Finally, the claim in (iii) follows directly from Lemma~\ref{lem:branch-colour}.
\end{proof}

\paragraph{Stage III.}  Stage II gave us very good control over the degrees in the parts $\mc A_i$, ensuring that sets were comparable to very few other sets from their own parts, but were contained in many sets from the part above and contained many sets from the part below.  In this stage we take another step towards the desired final partition by ensuring the parts $\mc A_i$, rather than merely being sparse in comparable pairs, will be antichains.  This shall provide us with the following additional quality.
\begin{itemize}
	\item[(Q7)] The parts $\mc A_i$, $1 \le i \le k-1$, are antichains.
\end{itemize}

We achieve this by removing comparable pairs within the parts $\mc A_i$ in one of two ways.  We shall branch up and down from monochromatic pairs, efficiently colouring and moving sets to $\mc R$.  On the other hand, if the pair is oppositely-coloured most of the time, then we shall place it in the hitherto empty part $\mc P_i$.  Sets moved to these parts have the following quality.
\begin{itemize}
	\item[(Q8)] For every $i \in [k-1]$ and $P \in \mc P_i$, $d(P, \mc A_i) \le 2 \omega$.
\end{itemize}
We explain the process in more detail below. \\

\noindent \emph{Removing a comparable pair}: Suppose $A\subset B$ is a comparable pair in $\mc A_i$ for some $i\in [k-1]$.

{
\narrower
\paragraph{Case 1:} $A$ and $B$ receive the same colour in at least a third of the colourings in $\fr C$.

Fix the more common monochromatic colouring of $A$ and $B$, and restrict $\fr C$ to those colourings.  Remove the sets $A$ and $B$ to $\mc R$.  We then run the branching up and down operation from $A \subset B$, branching up from $B$ to $\mc A_{i-1}$ and beyond, and branching down from $A$ to $\mc A_{i+1}$ and below, with the qualities (Q3) and (Q5) ensuring this can be carried out.

\paragraph{Case 2:} $A$ and $B$ receive different colours in at least two-thirds of the colourings in $\fr C$.

Colour $A$ and $B$ by the more common of the two patterns where they are oppositely coloured.  We restrict $\fr C$ to those colourings admitting this pattern on $A$ and $B$, and we remove the pair $A \subset B$ from $\mc A_i$ and place it in $\mc P_i$. \\

}

In this process, we have removed sets from $\mc A$, which could affect the qualities (Q3) and (Q5).  As in Stage IIb, we restore these properties by moving any violating sets to $\mc U$ or $\mc D$. \\

\noindent \emph{Moving to $\mc U \cup \mc D$}:  This operation is exactly as in Stage IIb. \\

We repeat this combination of operations --- first removing a comparable pair, and then moving sets to $\mc U \cup \mc D$ --- until there are no comparable pairs remaining in the parts $\mc A_i$, at which point Stage III ends.  The following lemma summarises the effects of this stage.

\begin{lemma} \label{lem:end-III}
At the end of Stage III, the following statements are true.
\begin{itemize}
	\item[(i)] The partition $\mc F = \cup_{i \in [k-1]} \left( \mc A_i \cup \mc U_i \cup \mc D_i \cup \mc P_i \right) \cup \mc R$ has the qualities (Q1)--(Q8).
	\item[(ii)] For every $F \in \mc U \cup \mc D \cup \mc P$, $\card{N_{\textup{pr}}(F)} \le 3 \omega$.  Moreover, if $\mc B_i \in \left\{ \mc U_i \cup \mc D_i \cup \mc P_i, \mc U_i \cup \mc D_{i-1}, \mc D_i \cup \mc U_{i+1} \right\}$ and $F \in \mc B_i$, then the left-neighbourhood of $F$ with respect to the right-to-left order of the members of $\mc A_i \cup \mc B_i$ is a subfamily of $N_{\textup{pr}}(F)$.
	\item[(iii)] If $t$ sets are moved to $\mc R$ during this stage, the family $\fr C$ of colourings shrinks by a factor of at most $2^{\eps t} 3^{\frac12 \card{\mc P}}$.
\end{itemize}
\end{lemma}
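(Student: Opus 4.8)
The plan is to verify each of the three assertions in Lemma~\ref{lem:end-III} by tracking how Stage III modifies the partition and the colouring set $\fr C$, leaning heavily on the fact that Stage III is a small perturbation of the Stage IIb machinery combined with the branching operations already analysed.

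For (i), I would argue quality by quality. The qualities (Q1), (Q2) and (Q4) concern degrees \emph{into} the parts $\mc A_i$; these can only be endangered when sets are moved into some $\mc A_j$, and in Stage III this never happens (sets only leave $\mc A$, going to $\mc R$, $\mc P$ or, via the ``moving to $\mc U \cup \mc D$'' step, to $\mc U \cup \mc D$). Since removing sets from a part cannot increase any degree into that part, these qualities are inherited from the end of Stage II. Qualities (Q3) and (Q5) hold because the ``moving to $\mc U \cup \mc D$'' operation is repeated until no violating set remains, which is exactly the terminating condition; I should note that this operation does terminate, as $\mc A$ shrinks with each move. Quality (Q6) for the sets that were already in $\mc U \cup \mc D$ at the start of Stage III is inherited as in the proof of Lemma~\ref{lem:end-II} (their relevant neighbourhoods only shrink); for sets newly moved to $\mc U_i$ or $\mc D_i$ in Stage III, the same computation as in the (Q6) part of Lemma~\ref{lem:end-II}(i) applies verbatim, using (Q2), (Q4) and (Q1). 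Quality (Q7) is the explicit stopping condition of Stage III. Quality (Q8) follows from the branching-pair bookkeeping: when a pair $A \subset B$ is placed in $\mc P_i$ in Case 2, we have $d^+(A, \mc A_i) \le \omega$ and $d^-(A, \mc A_i) \le \omega$ by (Q2) and (Q4) (and likewise for $B$), so each of $A, B$ is comparable to at most $2\omega$ sets of $\mc A_i$; since $\mc A_i$ only loses sets afterwards, this persists.

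For (ii), the bound $\card{N_{\textup{pr}}(F)} \le 3\omega$ for $F \in \mc P_i$ is immediate from Definition~\ref{def:prosp-nhd}: $N_{\textup{pr}}(F) = N(F, \mc A_i)$ taken at the moment $F$ entered $\mc P_i$, which by (Q2) and (Q4) at that time has size at most $2\omega \le 3\omega$; for $F \in \mc U \cup \mc D$ the bound is carried over from Lemma~\ref{lem:end-II}(ii) for sets present since Stage II, and proven exactly as there for sets freshly moved in Stage III. The ``left-neighbourhood is a subfamily of $N_{\textup{pr}}(F)$'' assertion is proven exactly as the corresponding part of Lemma~\ref{lem:end-II}(ii): in the right-to-left order of $\mc A_i \cup \mc B_i$, everything to the left of $F$ consisted (at the time $F$ entered $\mc B_i$) of members of the appropriate union of $\mc A$-parts, and $N_{\textup{pr}}(F)$ is precisely $F$'s neighbourhood among those parts at that time; subsequent shrinking of the parts only removes potential left-neighbours, so the containment is preserved. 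One must check all three choices of $\mc B_i$, but the argument is symmetric.

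For (iii), which I expect to be the only part requiring genuine (if routine) estimation, I would account separately for the two cases in ``removing a comparable pair''. In Case~1 we first restrict to the more common monochromatic pattern on $\{A,B\}$: since they agree in at least a third of $\fr C$, and there are two monochromatic patterns, this costs a factor of at most $6$; then the branching up and down operation from the pair $A \subset B$ costs, by Lemma~\ref{lem:branch-colour}, a factor of at most $\tfrac16 2^{\eps t_j}$ if $t_j$ sets are added to $\mc R$ in that operation. The net effect of one Case~1 step is therefore a shrink by at most $6 \cdot \tfrac16 2^{\eps t_j} = 2^{\eps t_j}$, and multiplying over all Case~1 steps gives at most $2^{\eps \card{\mc R}}$ overall (the $\mc R$ here being the growth during Stage III). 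In Case~2 we restrict to the more common of the two oppositely-coloured patterns on $\{A, B\}$, which, since those patterns together cover at least two-thirds of $\fr C$, costs a factor of at most $3$; each such step adds a pair to $\mc P$, i.e. increases $\card{\mc P}$ by $2$, so the total cost over all Case~2 steps is at most $3^{\card{\mc P}/2}$. The ``moving to $\mc U \cup \mc D$'' operation does not restrict $\fr C$ at all. Combining, $\fr C$ shrinks by a factor of at most $2^{\eps t} 3^{\frac12 \card{\mc P}}$, where $t$ is the number of sets moved to $\mc R$ during Stage III, as claimed. The main obstacle, such as it is, will be keeping the bookkeeping of which operations touch $\fr C$ consistent and confirming that all the branching prerequisites (the ``(Q3) and (Q5) ensuring this can be carried out'' remark) genuinely hold at each invocation; this reduces to the observations about $\prec$ already recorded in Remark~\ref{rmk:case3}.
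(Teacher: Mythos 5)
Your proposal is correct and follows essentially the same route as the paper's proof: inheriting (Q1)--(Q6) from Stage II since sets only leave $\mc A$, reading (Q7) off the stopping condition, deriving (Q8) and the $\mc P$-part of (ii) from (Q2) and (Q4) at the moment of removal, and for (iii) charging the factor of $6$ in Case~1 against the $\tfrac16$ in Lemma~\ref{lem:branch-colour} and the factor of $3$ in Case~2 against the two sets added to $\mc P$. No gaps.
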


\begin{proof}
By Lemma~\ref{lem:end-II}(i), at the start of Stage III the partition has the qualities (Q1)--(Q6).  Since no new sets are introduced to any part $\mc A_i$ in Stage III, (Q1), (Q2) and (Q4) are unaffected, and continue to hold.  The moving to $\mc U \cup \mc D$ operations in Stage III ensure that (Q3) and (Q5) also hold by the end of this stage.  Since degrees into $\mc A$ can only decrease in Stage III, (Q6) holds for the previous members of $\mc U \cup \mc D$.  The proof that it holds for the new members is exactly as in Lemma~\ref{lem:end-II}(i).

For the new qualities, observe that (Q7) must hold at the end of Stage III, since if some $\mc A_i$ was not an antichain, then it would have a comparable pair that we could remove, and Stage III would not have ended.  To establish (Q8), observe that just before a set $P$ is moved from $\mc A_i$ into $\mc P_i$, it satisfies (Q2) and (Q4).  Hence $d(P, \mc A_i) = d^+(P, \mc A_i) + d^-(P, \mc A_i) \le 2 \omega$, and this degree can only decrease as the stage progresses.  Hence (Q8) also holds, thus showing (i) to be true.

We next consider (ii).  For the first statement, if $F \in \mc U \cup \mc D$, the proof from Lemma~\ref{lem:end-II}(ii) applies.  If $F \in \mc P_i$, then $N_{\textup{pr}}(F)$ is the set of neighbours of $F$ in $\mc A_i$ at the time $F$ was moved from $\mc A_i$ to $\mc P_i$.  As we have just shown above, there are at most $2 \omega$ such neighbours, and so the desired bound holds.  As for the second part of the statement, the only case that differs from Lemma~\ref{lem:end-II}(ii) is when $\mc B_i = \mc U_i \cup \mc D_i \cup \mc P_i$.  Here we observe that in the right-to-left order of $\mc A_i \cup \mc B_i$, the sets to the left of $F$ were all in $\mc A_i$ when $F$ was removed from $\mc A_i$.  Hence any left-neighbours belong to $N_{\textup{pr}}(F)$, as required.

Finally, we prove (iii).  Whenever we are in Case 2, and move a pair of sets to $\mc P$, $\fr C$ shrinks by a factor of at most three, while $\mc P$ gains two sets.  Hence these cases cause $\fr C$ to shrink by a factor of at most $3^{\frac12 \card{\mc P}}$.  In Case 1, the choice of the colouring of the pair $A \subset B$ causes $\fr C$ to shrink by a factor of at most six.  Lemma~\ref{lem:branch-colour} the controls the subsequent shrinkage caused by the branching operation, and it follows that if a total of $t$ sets are moved to $\mc R$, then $\fr C$ shrinks by a further factor of at most $2^{\eps t}$.
\end{proof}

\paragraph{Stage IV.} We are now very close to the partition promised in Proposition~\ref{prop:kchainstructure}.  All that remains is to partition $\mc R$ into the subparts $\mc R_i$, $i \in [k-1]$, that satisfy the following quality.
\begin{itemize}
	\item[(Q9)] For every $i \in [k-1]$ and $R \in \mc R_i$, $d(R, \mc A_i) \le 2 \omega$.
\end{itemize}
We achieve this with a sequence of branching operations, as outlined below. \\

\noindent \emph{Branching from  set in $\mc R$}: Suppose there is a set $F \in \mc R$ with more than $2 \omega$ neighbours in each part $\mc A_i$.  There are three possible cases for our branching operations.\footnote{Note that as $F \in \mc R$, it has the same colour in all of our colourings in $\fr C$.  Hence when we branch from $F$, we will build monochromatic chains in this common colour.}

{
\narrower
\paragraph{Case 1.}  $d^-(F, \mc A_1) > \omega$.

In this case we branch down from $F$, first to its subsets in $\mc A_1$, and then to lower parts, using (Q5).

\paragraph{Case 2.} $d^+(F, \mc A_{k-1}) > \omega$.

In this case we branch up from $F$, first to its supersets in $\mc A_{k-1}$, and then to higher parts, using (Q3).

\paragraph{Case 3.} $d^-(F, \mc A_1) \le \omega$ and $d^+(F, \mc A_{k-1}) \le \omega$.

Since $d^-(F, \mc A_1) \le \omega$ but $d(F, \mc A_1) > 2 \omega$, we must have $d^+(F, \mc A_1) > \omega$.  Let $i_0 \ge 1$ be the maximum index for which $d^+(F, \mc A_{i_0}) > \omega$, noting that $i_0 \le k-2$ as $d^+(F, \mc A_{k-1}) \le \omega$.  We must then have $d^+(F, \mc A_{i_0+1}) \le \omega$ and, since $d(F, \mc A_{i_0 + 1}) > 2 \omega$, we know $d^-(F, \mc A_{i_0 + 1}) > \omega$.  We can then branch up and down from $F$, using (Q3) to go from $F$ to its supersets in $\mc A_{i_0}$ and then to higher parts, and using (Q5) to go from $F$ to its subsets in $\mc A_{i_0 + 1}$ and on to lower parts.   \\

}

Again, colouring and removing sets from $\mc A$ to $\mc R$ could affect (Q3) and (Q5), so we move sets to $\mc U \cup \mc D$ to restore those qualities. \\

\noindent \emph{Moving to $\mc U \cup \mc D$}: This process is exactly as in Stage IIb. \\

We repeat this sequence of operations until there are no sets in $\mc R$ satisfying the assumption of the branching from $\mc R$ operation outlined above, at which point Stage IV is complete.  Our final lemma shows that Stage IV gives the desired partition of $\mc R$.

\begin{lemma} \label{lem:end-IV}
At the end of Stage IV, the following statements are true.
\begin{itemize}
	\item[(i)] $\mc R$ admits a partition $\mc R = \cup_{i \in [k-1]} \mc R_i$ such that $\mc F = \cup_{i \in [k-1]} \left( \mc A_i \cup \mc U_i \cup \mc D_i \cup \mc P_i \cup \mc R_i \right)$ has the qualities (Q1)--(Q9).
	\item[(ii)] For every $F \in \mc U \cup \mc D \cup \mc P$, $\card{N_{\textup{pr}}(F)} \le 3 \omega$.  Moreover, if $\mc B_i \in \left\{ \mc U_i \cup \mc D_i \cup \mc P_i, \mc U_i \cup \mc D_{i-1}, \mc D_i \cup \mc U_{i+1} \right\}$ and $F \in \mc B_i$, then the left-neighbourhood of $F$ with respect to the right-to-left order of the members of $\mc A_i \cup \mc B_i$ is a subfamily of $N_{\textup{pr}}(F)$.
	\item[(iii)] If $t$ sets are moved to $\mc R$ during this stage, the family $\fr C$ of colourings shrinks by a factor of at most $2^{\eps t}$.
\end{itemize}
\end{lemma}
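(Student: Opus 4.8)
I would verify the three assertions in turn, mirroring the proofs of Lemmas~\ref{lem:end-II} and~\ref{lem:end-III} and leaning on two elementary features of Stage~IV. First, \emph{no set is ever added to a part $\mc A_i$ during this stage}: the branching operations move sets only out of $\mc A$ and into $\mc R$, while the ``moving to $\mc U\cup\mc D$'' operation moves sets only from $\mc A$ into $\mc U\cup\mc D$; in particular, Stage~IV performs no shifting between $\mc A$-parts. Second, $\mc R$ grows monotonically, so, since $\mc F$ is finite, Stage~IV halts after finitely many rounds of ``branch, then move to $\mc U\cup\mc D$''.

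\emph{Part (i).} When Stage~IV halts there is no $F\in\mc R$ with more than $2\omega$ neighbours in every part $\mc A_i$; so for each $R\in\mc R$ I fix an index $i(R)\in[k-1]$ with $d(R,\mc A_{i(R)})\le 2\omega$ and put $\mc R_i=\{R\in\mc R:i(R)=i\}$, which gives (Q9) at once. Qualities (Q1), (Q2), (Q4), (Q7) and (Q8) only constrain comparabilities inside, or degrees into, the parts $\mc A_i$; by the first feature above those parts merely shrink during Stage~IV, and deleting members of $\mc A_i$ can neither create a comparable pair inside $\mc A_i$ nor raise a degree into it, so these qualities, valid at the end of Stage~III by Lemma~\ref{lem:end-III}(i), survive. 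Quality (Q6) is inherited by sets already in $\mc U\cup\mc D$, whose degrees into $\mc A$ can only fall, and is checked for newly moved sets by the degree computation of Lemma~\ref{lem:end-II}(i); since the moving operation never feeds $\mc U_1$ or $\mc D_{k-1}$, these parts remain empty. Finally, Stage~IV stops only after a run of ``moving to $\mc U\cup\mc D$'' in which nothing moves, which is precisely the statement that (Q3) and (Q5) hold for every set of $\mc A$.

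\emph{Parts (ii) and (iii).} For (ii), a set $F$ already in $\mc U\cup\mc D\cup\mc P$ at the end of Stage~III has $N_{\textup{pr}}(F)$ frozen, so $\card{N_{\textup{pr}}(F)}\le 3\omega$ is untouched, and, since Stage~IV does no shifting between $\mc A$-parts, the left-neighbourhood inclusion is re-established by exactly the argument of Lemma~\ref{lem:end-III}(ii). For a set $F$ newly moved to $\mc U_i$ (resp.\ $\mc D_i$) during Stage~IV, one has $N_{\textup{pr}}(F)=N(F,\mc A_i\cup\mc A_{i-1})$ (resp.\ $N(F,\mc A_i\cup\mc A_{i+1})$), and by (Q2), (Q4) and the threshold in the moving operation each of the three contributing up-/down-degrees is at most $\omega$, whence $\card{N_{\textup{pr}}(F)}\le 3\omega$, the left-neighbourhood inclusion following as in Lemma~\ref{lem:end-II}(ii). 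For (iii), the ``moving to $\mc U\cup\mc D$'' operation involves no colouring choice and so leaves $\fr C$ unchanged; thus all shrinkage is due to branching, and since every $F\in\mc R$ is already coloured no factor is spent on its colour, so Lemma~\ref{lem:branch-colour} gives that a single branching operation adding $t_j$ sets to $\mc R$ shrinks $\fr C$ by a factor of at most $\tfrac16 2^{\eps t_j}\le 2^{\eps t_j}$; multiplying over all branching operations, with $\sum_j t_j=t$, bounds the total shrinkage by $2^{\eps t}$.

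\emph{Main obstacle.} Little here is genuinely new; the point requiring care --- and already set up in the description of Stage~IV --- is the \emph{executability} of the branching from $\mc R$: when $d(F,\mc A_i)>2\omega$ for every $i$ one must verify that exactly one of Cases~1--3 applies and that all instances of (Q3) and (Q5) needed along the monochromatic chain being constructed are available, the latter holding because the ``moving to $\mc U\cup\mc D$'' step run immediately before each branching restores (Q3) and (Q5). The only other mildly delicate point is re-checking the left-neighbourhood clause of (ii) once fresh sets have entered the blocks $\mc B_i$, but, thanks to the absence of shifting in Stage~IV, this is a local argument of the same flavour as those already made for Stages~II and~III.
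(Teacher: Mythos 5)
Your proposal is correct and follows essentially the same route as the paper: the paper also partitions $\mc R$ by choosing for each $R\in\mc R$ an index $i$ with $d(R,\mc A_i)\le 2\omega$ (well-defined precisely because Stage IV has terminated), observes that (Q1)--(Q8) persist by the same arguments as in Lemmas~\ref{lem:end-II} and~\ref{lem:end-III}, and deduces (iii) directly from Lemma~\ref{lem:branch-colour}. Your additional observations --- that no $\mc A_i$ ever gains a set in Stage IV and that no colour-choice factor is spent when branching from an already-coloured set in $\mc R$ --- are exactly the points the paper leaves implicit.
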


\begin{proof}
Given $F \in \mc R$, let $i(F) = \min \{ i : d(F, \mc A_i) \le 2 \omega \}$.  Note that $i(F) \in [k-1]$ is well-defined, as if there was no such part $\mc A_i$ for a set $F \in \mc R$, Stage IV would not have ended.  This leads to the partition $\mc R = \cup_{i \in [k-1]} \mc R_i$, where $\mc R_i = \{ F \in \mc R: i(F) = i \}$, that satisfies (Q9).  As for qualities (Q1)--(Q8), note that they hold at the beginning of Stage IV by Lemma~\ref{lem:end-III}(i).  The proof that they remain valid is exactly as in Lemmas~\ref{lem:end-II}(i) and~\ref{lem:end-III}(i), which we need not repeat here.  This establishes (i).

Similarly, the proof that (ii) continues to hold with any new sets that might have been added to $\mc U \cup \mc D$ is just as in Lemmas~\ref{lem:end-II}(ii) and~\ref{lem:end-III}(ii), and hence (ii) is also true at the end of Stage IV.

Finally, (iii) follows directly from Lemma~\ref{lem:branch-colour}.
\end{proof}

At the end of Stage IV, we have partitioned our family $\mc F$ into parts $\mc A$, $\mc U$, $\mc D$, $\mc D$ and $\mc R$, which each part admitting a subpartition into $k-1$ further parts.  We close this subsection by proving that this partition has the five properties we required.

\begin{proof}[Proof of Proposition~\ref{prop:kchainstructure}]
Let $\mc F \subseteq 2^{[n]}$ be a family with at least one $(2,k)$-colouring.  As we have previously noted, $\mc F$ must be $(2k-1)$-chain-free, and therefore admits a partition into $2k-2$ antichains.  For any subfamily $\mc H \subseteq \mc F$, applying the pigeonhole principle to the intersection of $\mc H$ with these antichains shows that there is some antichain $\mc H' \subseteq \mc H$ with $\card{\mc H'} \ge \card{\mc H} / (2k-2)$, thus establishing (P5).

We now run $\mc F$ through Stages I to IV, by the end of which we have a partition
\[ \mc F = \cup_{i\in[k-1]} \left( \mc A_i \cup \mc U_i \cup \mc D_i \cup \mc P_i \cup \mc R_i \right). \]

By Lemma~\ref{lem:end-IV}(i), this partition has the quality (Q7), which is that each $\mc A_i$ is an antichain.  Hence (P2) is satisfied.

Quality (Q6) asserts that $\mc U_1 = \mc D_{k-1} = \emptyset$, and that any set in $\mc U_i$, $\mc U_{i+1}$, $\mc D_i$ or $\mc D_{i-1}$ is comparable to at most $2 \omega$ sets in $\mc A_i$.  Qualities (Q8) and (Q9) establish the same bound for sets in $\mc P_i$ and $\mc R_i$ respectively.  Thus (P3) holds.

Now let $\mc B_i \in \left\{ \mc U_i \cup \mc D_i \cup \mc P_i, \mc U_i \cup \mc D_{i-1}, \mc D_i \cup \mc U_{i+1} \right\}$, and consider the right-to-left order of $\mc A_i \cup \mc B_i$.  The number of comparable pairs in $\mc A_i \cup \mc B_i$ is simply the sum of the left-degrees of the sets in this order, which we sum from right to left.  By Lemma~\ref{lem:end-IV}(ii), we know that for every set $F \in \mc B_i$, its left-neighbourhood is a subfamily of $N_{\textup{pr}}(F)$, which has size at most $3 \omega$.  Hence we obtain at most $3 \omega \card{\mc B_i}$ comparable pairs involving sets in $\mc B_i$.  This leaves $\mc A_i$, which, by (Q7), is an antichain, and thus has no comparable pairs.  Hence $\mc A_i \cup \mc B_i$ has at most $3 \omega \card{\mc B_i}$ comparable pairs, giving (P4).

Finally, we bound the number of $(2,k)$-colourings.  We considered all $(2,k)$-colourings of $\mc F$, but by the end of Stage IV restricted our attention to a subset $\fr C$ of colourings where the sets in $\mc P$ and $\mc R$ had already been coloured.  Combining Lemmas~\ref{lem:end-I},~\ref{lem:end-II}(iii),~\ref{lem:end-III}(iii) and~\ref{lem:end-IV}(iv), $\fr C$ shrinks by a factor of at most $2^{\eps \card{\mc R}} 3^{\frac12 \card{\mc P}}$ throughout the process.  Moreover, since only sets in $\mc A \cup \mc  U \cup \mc D$ remain to be coloured, we must have $\card{\fr C} \le 2^{\card{\mc A} + \card{\mc U} + \card{\mc D}}$.  This shows that the number of $(2,k)$-colourings of $\mc F$ is in total at most $2^{\card{\mc A} + \card{\mc U} + \card{\mc D} + \eps \card{\mc R}} 3^{\frac12 \card{\mc P}}$, as required for (P1).

Hence the partition provided by the procedure has all the desired properties.
\end{proof}

\subsection{Proofs of the lemmata} \label{subsec:supersat}

Now that we have seen how a family with $(2,k)$-colourings can be partitioned, and how that partition can be used to bound the number of such colourings, all that remains to complete the proof of Theorem~\ref{thm:2-colours} is to prove the lemmata from Section~\ref{subsec:counting}.  Recall that we defined the weight of a set $F \subseteq [n]$ as $w_k(F) = \min \left\{ \binom{n}{\card{F}}^{-1}, \binom{n}{\floor{\frac{n-k}{2}}}^{-1} \right\}$.  We use the following bound, valid for $n \ge 4k^2$.
\begin{equation}\label{weight-asym}
\binom{n}{\floor{n/2}}^{-1}\le w_k(F) \le \binom{n}{\floor{\frac{n-k}{2}}}^{-1} \le \left(1+\frac{2k^2}{n}\right) \binom{n}{\floor{n/2}}^{-1}.
\end{equation}

We first prove Lemma~\ref{lem:transference}, which allows us to convert between family weights and sizes.

\begin{proof}[Proof of Lemma \ref{lem:transference}]
Let $\mc H$ be the lightest subfamily of $2^{[n]}$ containing $m_{k-1}+t$ sets.  Since the weights of sets increase as we move away from the middle level, we can assume that $\mc H$ contains the $m_{k-1}$ sets from the $k-1$ middle levels, and $t$ additional sets from the next level.  The $k-1$ middle levels have total weight $k-1$, and by~\eqref{weight-asym} each of the additional sets have weight at least $\binom{n}{\floor{n/2}}^{-1}$, and thus
\begin{equation}\label{weight-H-lower}
\weight(\mc H) \ge k-1+ t \binom{n}{\floor{n/2}}^{-1}.
\end{equation}

If $\card{\mc F_0} \ge m_{k-1}+t$, then by the choice of $\mc H$, one has $\weight(\mc F_0)\ge \weight(\mc H) \ge k-1+ t \binom{n}{\floor{n/2}}^{-1}$. Since $\alpha_i$ and $\weight(\mc F_i)$ are non-negative for every $1\le i \le s$, this implies the desired inequality, $\weight(\mc F_0)+\left(1+\tfrac{2k^2}{n}\right) \sum_{i=1}^{s}\alpha_i\weight(\mc F_i) \ge k-1+t \binom{n}{\floor{n/2}}^{-1}$.

Hence we suppose $\card{\mc F_0}<m_{k-1}+t$. Let us denote by $\mc H_0$ a lightest subfamily of $2^{[n]}$ of size $\card{\mc F_0}$, and note that we may take $\mc H_0\subset\mc H$.  Then $\weight(\mc H_0) \le \weight(\mc F_0)$ and $\card{\mc H\setminus\mc H_0}=m_{k-1}+t-\card{\mc F_0}\le \sum_{i=1}^{s}\al_i\card{\mc F_i}$.  Using~\eqref{weight-asym} in the first and third inequalities below, it follows that
\begin{align}\label{weight-H-upper}
\notag \weight(\mc H)=\weight(\mc H_0)+\weight(\mc H\setminus\mc H_0) &\le \weight(\mc F_0)+\left(1+\frac{2k^2}{n}\right)\card{\mc H\setminus\mc H_0}\binom{n}{\floor{n/2}}^{-1}\\
\notag & \le \weight(\mc F_0)+\left(1+\frac{2k^2}{n}\right)\sum_{i=1}^{s} \alpha_i\card{\mc F_i}\binom{n}{\floor{n/2}}^{-1}\\
&\le \weight(\mc F_0)+\left(1+\frac{2k^2}{n}\right)\sum_{i=1}^{s}\alpha_i\weight(\mc F_i).
\end{align}

Combining \eqref{weight-H-lower} and \eqref{weight-H-upper} now gives the desired inequality.
\end{proof}

We next turn to the supersaturation result of Lemma~\ref{lem:2-supersat}, which states that families of large weight must contain many comparable pairs.  Our proof of this result requires the following variant of the LYM inequality~\cite{Lub66,Mes63,Yam54}.

\begin{lemma} \label{lem:LYM}
If $\mc F$ is a subfamily of $2^{[n]}$ that does not contain $[n]$, then
\begin{equation}\label{ineq:LYMbound}
\sum_{F \in \mc F} \binom{n}{\card{F}}^{-1}\lt(1-\frac{d^{+}(F,\mc F)}{n-\card{F}}\rt)\le 1.
\end{equation}	
\end{lemma}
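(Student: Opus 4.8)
The plan is to adapt the classical \emph{random maximal chain} proof of the LYM inequality, inserting a correction term that accounts for the comparabilities inside $\mc F$. Let $\mc C\colon \emptyset = C_0 \subsetneq C_1 \subsetneq \ldots \subsetneq C_n = [n]$ be a uniformly random maximal chain in $2^{[n]}$ (equivalently, the chain built by adjoining the elements of $[n]$ one at a time in a uniformly random order), and for each $F \in \mc F$ let $E_F$ be the event that $F \in \mc C$ while $F' \notin \mc C$ for every $F' \in \mc F$ with $F \subsetneq F'$; in words, $F$ is the inclusion-largest member of $\mc F$ met by $\mc C$. The key point is that the events $\left\{ E_F : F \in \mc F \right\}$ are pairwise disjoint: any two members of $\mc F$ lying on the common chain $\mc C$ are comparable, so $\mc C$ meets at most one inclusion-largest member of $\mc F$. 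Hence $\sum_{F \in \mc F} \bP(E_F) = \bP\!\left( \bigcup_{F \in \mc F} E_F \right) \le 1$, and it remains only to show that $\bP(E_F)$ is at least the $F$-summand on the left-hand side of~\eqref{ineq:LYMbound}.

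To that end I would condition on the event $F \in \mc C$, which has probability $\card{F}!\,(n-\card{F})!/n! = \binom{n}{\card{F}}^{-1}$. Conditioned on this, the part $C_{\card{F}} \subsetneq C_{\card{F}+1} \subsetneq \ldots \subsetneq C_n$ of the chain above $F$ is a uniformly random maximal chain in the interval $[F,[n]]$, which is isomorphic to the Boolean lattice on the $n - \card{F}$ elements of $[n] \setminus F$. For a fixed $F' \in \mc F$ with $F \subsetneq F'$, the hypothesis $[n] \notin \mc F$ forces $1 \le \card{F'} - \card{F} \le n - \card{F} - 1$, and therefore
\[ \bP\!\left( F' \in \mc C \mid F \in \mc C \right) = \binom{n - \card{F}}{\card{F'} - \card{F}}^{-1} \le \frac{1}{n - \card{F}}, \]
using the bound $\binom{m}{j} \ge m$ valid for $1 \le j \le m-1$. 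A union bound over the $d^{+}(F, \mc F)$ such sets $F'$ now yields
\[ \bP(E_F) \ge \binom{n}{\card{F}}^{-1}\left( 1 - \frac{d^{+}(F, \mc F)}{n - \card{F}} \right), \]
which also holds trivially whenever the bracketed quantity is negative. Summing this over all $F \in \mc F$ and combining with the disjointness bound established above gives exactly~\eqref{ineq:LYMbound}.

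I do not anticipate a genuine obstacle. The two points that need a little care are the disjointness of the events $E_F$ --- which is why $E_F$ is phrased via the \emph{topmost} member of $\mc F$ on the chain rather than merely ``$F \in \mc C$'' --- and the estimate $\binom{n-\card{F}}{\card{F'}-\card{F}}^{-1} \le (n-\card{F})^{-1}$, whose validity hinges on excluding the case $\card{F'} = n$; this is precisely where the assumption $[n] \notin \mc F$ enters. Everything else is the routine chain-counting bookkeeping behind the LYM inequality, and when $\mc F$ happens to be an antichain all the terms $d^{+}(F,\mc F)$ vanish and we recover the familiar inequality $\sum_{F \in \mc F} \binom{n}{\card{F}}^{-1} \le 1$.
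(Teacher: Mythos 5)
Your proof is correct and is essentially the paper's argument in probabilistic dress: the paper sums the inequality $m_1(\sigma)-\binom{m_1(\sigma)}{2}\le 1$ over permutations, which is exactly your ``at most one topmost member of $\mc F$ on the chain'' disjointness combined with the union bound, and both proofs then invoke the same estimate $\binom{n-\card{F}}{\card{F'}-\card{F}}\ge n-\card{F}$, which is where $[n]\notin\mc F$ is used. No gaps.
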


\begin{proof}
Given a permutation $\sigma$ of $[n]$, let $m_1(\sigma)$ denote the number of sets from $\mc F$ appearing as a prefix in $\sigma$, and let $m_2(\sigma) = \binom{m_1(\sigma)}{2}$ be the number of pairs of such sets.  Since $m - \binom{m}{2} \le 1$ for every natural number $m$, we have $m_1(\sigma) - m_2(\sigma) \le 1$.  Summing over every permutation $\sigma$ and double-counting, we have
\[ \sum_{F \in \mc F} \card{F}! (n - \card{F})! - \sum_{\substack{F,G\in \mc F \\ F\subset G}} \card{F}!(\card{G} - \card{F})!(n-\card{G})! = \sum_{\sigma \in S_n} m_1(\sigma) - \sum_{\sigma \in S_n} m_2(\sigma) \le n!. \]
Given $F$, the quantity $(\card{G} - \card{F})!(n-\card{G})!$ is maximised for $F \subsetneq G \subsetneq [n]$ when $\card{G}-\card{F}=1$.  Hence
\[ \sum_{\substack{F, G \in \mc F \\ F \subset G}} \card{F}! (\card{G} - \card{F})! (n - \card{G})! = \sum_{F \in \mc F} \card{F}! \sum_{\substack{G \in \mc F \\ F \subset G}} (\card{G} - \card{F})!(n - \card{G})! \le \sum_{F \in \mc F} d^+(F, \mc F) \card{F}! (n - \card{F} - 1)!. \]
Making this substitution and dividing through by $n!$ gives~\eqref{ineq:LYMbound}.
\end{proof}

Using this result, we can prove Lemma~\ref{lem:2-supersat}.  In what follows, we denote by $\textup{cp}(\mc F)$ the number of comparable pairs in a set family $\mc F$.

\begin{proof}[Proof of Lemma \ref{lem:2-supersat}]
We prove the lemma by induction on $\card{\mc F}$. Note that the statement holds vacuously for $\mc F=\emptyset$.

We now proceed to the induction step with $\mc F \ne \emptyset$. The statement is trivial for $r\le 0$, since the number of comparable pairs in $\mc F$ is non-negative. Now consider the case $r>0$. Note that, by \eqref{weight-asym}, since $n\ge 2\delta^{-1}k^2$ and $F\subset [n]$, we have
\begin{equation}\label{weight-asym-2}
\binom{n}{\floor{n/2}}^{-1} \le \weight(F)\le (1+\delta)\binom{n}{\floor{n/2}}^{-1}.
\end{equation}

In what follows we shall reduce the problem to the case when 
\begin{equation}\label{supersat:bound_degrees}
\max\{d^{+}(F,\mc F),d^{-}(F,\mc F)\}<\left(\tfrac12-\tfrac{\de}{2}\right)n \ \text{for all $F\in\mc F$}.
\end{equation} 
Indeed, if this bound does not hold, then there is some set $F$ that is involved in at least $(\tfrac12 - \tfrac{\delta}{2})n$ comparable pairs.  Applying~\eqref{weight-asym-2} gives
\[
\weight(\mc F\setminus\{F\}) = \weight(\mc F)-\weight(F) \ge 1+(r-1-\delta)\binom{n}{\floor{n/2}}^{-1}.
\]
The induction hypothesis thus implies $\textup{cp}(\mc F\setminus\{F\}) \ge \left(\tfrac{1}{2}-\de\right)(r-1-\de)n$. Adding the comparable pairs involving $F$, $\textup{cp}(\mc F) \ge \left(\tfrac{1}{2}-\de\right)(r-1-\de)n+\left(\tfrac12-\tfrac{\de}{2}\right)n>\left(\tfrac{1}{2}-\de\right)rn$, as desired.
	
We remark that the degree condition \eqref{supersat:bound_degrees} implies $[n] \notin \mc F$. Indeed, by \eqref{weight-asym-2}, one has 
\[
1\le 1 + r \binom{n}{\floor{n/2}}^{-1} \le \weight(\mc F) \le (1+\delta) \card{\mc F} \binom{n}{\floor{n/2}}^{-1}.
\]
As $\delta \in (0, \frac12)$, we must have $\card{\mc F} \ge \frac12\binom{n}{\floor{n/2}}$, and thus $d^{-}([n],\mc F) \ge \card{\mc F}-1>n$ for $n$ sufficiently large. By~\eqref{supersat:bound_degrees}, it follows that $[n] \notin \mc F$.

Let $\mc L \subseteq \mc F$ consist of those sets whose sizes are at least $\left(\tfrac{1}{2}+\tfrac{\delta}{2}\right)n$, let $\ell = \card{\mc L}$, and let $\mc L_0 \subseteq \mc L$ be the subfamily of inclusion-minimal sets within $\mc L$.  Since $d^+(F, \mc F) < n$ for all $F \in \mc F$ by~\eqref{supersat:bound_degrees}, we must have
\begin{equation}\label{supersat:min_sets}
\card{\mc L_0}\ge \frac{\ell}{n}.
\end{equation}

We write $\mc S = \mc F \setminus \mc L$ for the subfamily of sets that have size smaller than $\left( \tfrac12 + \tfrac{\delta}{2} \right)n$.  By~\eqref{weight-asym-2},
\begin{equation}\label{supersat:weight}
\weight(\mc S)\ge \weight(\mc F)-(1+\delta) \ell \binom{n}{\floor{n/2}}^{-1}\ge 1+(r-(1+\delta)\ell)\binom{n}{\floor{n/2}}^{-1}.
\end{equation}

We now apply Lemma~\ref{lem:LYM} to the family $\mc S \cup \mc L_0$, obtaining
\begin{equation}\label{supersat:LYMtotal}
1 \ge \sum_{F \in \mc S \cup \mc L_0} \binom{n}{\card{F}}^{-1} \left( 1 - \frac{d^+(F, \mc S \cup \mc L_0)}{n - \card{F}} \right).
\end{equation}

We split this sum based on whether the sets are in $\mc L_0$ or $\mc S$.  For the former, note that $d^+(F, \mc S \cup \mc L_0) = 0$, since these sets are too large to be contained in any sets from $\mc S$, and $\mc L_0$ is an antichain.  As $\binom{n}{\card{F}} \le n^{-2}\binom{n}{\floor{n/2}}$ when $\card{F} \ge (\tfrac12 + \tfrac{\delta}{2})n$ and $n \ge C \delta^{-3}$ for large enough $C$, this gives
\begin{equation} \label{supersat:LYMantichain}
\sum_{F \in \mc L_0} \binom{n}{\card{F}}^{-1} \left(1 - \frac{d^+(F, \mc S \cup \mc L_0)}{n - \card{F}} \right) = \sum_{F \in \mc L_0} \binom{n}{\card{F}}^{-1} \ge \card{\mc L_0} n^2 \binom{n}{\floor{n/2}}^{-1} \ge \ell n \binom{n}{\floor{n/2}}^{-1},
\end{equation}
where we use~\eqref{supersat:min_sets} in the final inequality.

On the other hand, for $F \in \mc S$, observe that~\eqref{supersat:bound_degrees} implies $d^+(F, \mc F) \le (\tfrac12 - \tfrac{\delta}{2})n \le n - \card{F}$.  Hence, since $w_k(F) \le \binom{n}{\card{F}}^{-1}$, 
\[ \sum_{F \in \mc S} \binom{n}{\card{F}}^{-1} \left( 1 - \frac{d^+(F, \mc S \cup \mc L_0)}{n - \card{F}} \right) \ge \sum_{F \in \mc S} w_k(F) \left( 1- \frac{d^+(F, \mc F)}{(\tfrac12 - \tfrac{\delta}{2})n} \right) = w_k(\mc S) - \sum_{F \in \mc S} \frac{w_k(F) d^+(F, \mc F)}{(\tfrac12 - \tfrac{\delta}{2})n}.\]
Now observe that~\eqref{weight-asym-2} gives a uniform upper bound on $w_k(F)$, while $\sum_{F \in \mc S} d^+(F, \mc F) \le \textup{cp}(\mc F)$.  Moreover,~\eqref{supersat:weight} provides a lower bound for $w_k(\mc S)$.  Combining this with~\eqref{supersat:LYMtotal} and~\eqref{supersat:LYMantichain}, we have
\[ 1 \ge 1 + \left[ (r - (1 + \delta) \ell) - \frac{(1 + \delta) \textup{cp}(\mc F)}{( \tfrac12 - \tfrac{\delta}{2} ) n } + \ell n \right] \binom{n}{\floor{n/2}}^{-1}. \]

Solving for $\textup{cp}(\mc F)$ gives
\[ \textup{cp}(\mc F) \ge \frac{\tfrac12 - \tfrac{\delta}{2}}{1 + \delta} \left( r - (1 + \delta) \ell + \ell n \right)n \ge \frac{ \tfrac12 - \tfrac{\delta}{2} }{1 + \delta} rn \ge (\tfrac12 - \delta) rn, \]
where we use $r > 0$ in the final inequality.  This completes the proof of Lemma~\ref{lem:2-supersat}.
\end{proof}

\section{Asymptotics via containers}\label{sec:logasym}

In this section we shall prove Proposition~\ref{prop:logasym}, obtaining general upper bounds on $f(r,k;n)$ that are log-asymptotically tight whenever $r(k-1)$ is divisible by three.  
We make use of the theory of hypergraph containers developed by Balogh, Morris and Samotij~\cite{BMS15} and Saxton and Thomason~\cite{ST15}. In essence, what this theory says is that if the edges of a uniform hypergraph $\mathcal{H}$ are fairly evenly distributed, then there is a relatively small collection of `containers', each not too large, which cover the family of independent sets of $\mathcal{H}$.
We shall in particular apply the following result of Collares Neto and Morris (Theorem 4.2 in~\cite{CM15}) concerning containers for $k$-chain-free families in $2^{[n]}$.

\begin{prop}[Collares Neto--Morris~\cite{CM15}] \label{prop:containers}
For every $k \ge 2$, $\eps' > 0$ and $n$ sufficiently large, there is a set of containers $\Gamma \subseteq 2^{2^{[n]}}$ such that:
\begin{itemize}
	\item[(a)] each container $\mc C \in \Gamma$ has size $|\mc C| \le (k-1 + \eps') \binom{n}{\floor{n/2}}$,
	\item[(b)] every $k$-chain-free family $\mc I \subseteq 2^{[n]}$ is contained in some container $\mc C(\mc I) \in \Gamma$, and
	\item[(c)] the number of containers, $\card{\Gamma}$, is bounded from above by $\exp \left( \eps' \binom{n}{\floor{n/2}} \right)$.
\end{itemize}
\end{prop}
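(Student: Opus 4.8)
The plan is to derive this statement from the hypergraph container lemma of Balogh--Morris--Samotij~\cite{BMS15} and Saxton--Thomason~\cite{ST15}, applied to the $k$-uniform hypergraph $\mc H$ whose vertex set is $2^{[n]}$ and whose edges are the $k$-chains $F_1 \subset F_2 \subset \dots \subset F_k$. The independent sets of $\mc H$ are exactly the $k$-chain-free families, so a container family for the independent sets of $\mc H$ is precisely what we need --- the only catch being that a single application of the lemma yields containers of size $(1-\delta)2^n$, far above the target $(k-1+\eps')\binom n{\floor{n/2}}$. We therefore iterate: at each round a container is replaced by a controlled number of strictly smaller sub-containers, and the iteration is driven all the way down to the Erd\H{o}s bound by a supersaturation theorem for $k$-chains.

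Two ingredients are required. The first is \emph{supersaturation}: there is $c = c(k)>0$ such that every $\mc F \subseteq 2^{[n]}$ with $\card{\mc F} \ge m_{k-1}+s$ contains at least $c\,s\,n^{k-1}$ many $k$-chains. For $k=2$ this is Kleitman's Theorem~\ref{thm:Kleitman}; for general $k$ it follows from Erd\H{o}s's theorem by a routine averaging argument (each set beyond the $k-1$ middle levels lies in many $k$-chains through nearby levels), and is essentially the ``random version of Erd\H{o}s's theorem'' of~\cite{CM15}. The second is the \emph{co-degree condition}: the container lemma demands a parameter $\tau$ for which every $\ell$-set of vertices lies in at most $\tau^{\ell-1}\bar d$ edges, $\bar d$ being the average degree of $\mc H$. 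The chain hypergraph is badly non-regular --- extreme-size sets lie in astronomically many $k$-chains, and an $\ell$-chain with large size-gaps extends to a $k$-chain in far too many ways --- so one first restricts to the window $\mc W$ of sets whose size lies in an interval about $\floor{n/2}$ (of half-width $\Theta(\sqrt{n\log n})$ when $k \le 3$ and of half-width $\Theta(n)$ when $k \ge 4$), which suffices to make the $\ell$-co-degrees at most $\tau^{\ell-1}\bar d$ for a $\tau$ that is (sub)exponentially small in $n$; for $k=2$, where $\mc H$ degenerates to the comparability graph, one uses the graph container lemma instead.

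With these in place the iteration is routine. Begin with the single container $\mc W$; while a current container $\mc C$ satisfies $\card{\mc C} > (k-1+\tfrac12\eps')\binom n{\floor{n/2}}$ it exceeds $m_{k-1}$ by at least $\tfrac14\eps'\binom n{\floor{n/2}}$, so by supersaturation $\mc H[\mc C]$ has enough edges for the co-degree condition to hold and the container lemma replaces $\mc C$ by at most $\exp\!\big(O(\tau\card{\mc W}\log(1/\tau))\big)$ sub-containers, each of size at most $(1-\delta)\card{\mc C}$. After $O(\log n)$ rounds every container has size at most $(k-1+\tfrac12\eps')\binom n{\floor{n/2}}$, and since $\tau$ is (sub)exponentially small the number of containers produced is $\exp\!\big(o(\binom n{\floor{n/2}})\big)$. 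It remains to cover, for a $k$-chain-free $\mc I$, the part $\mc I \setminus \mc W$: by Erd\H{o}s's theorem this is contained in the union of the $k-1$ largest levels outside $\mc W$, and hence has size at most $\tfrac12\eps'\binom n{\floor{n/2}}$, so either (when $\mc W$ already leaves only $\le\tfrac12\eps'\binom n{\floor{n/2}}$ sets outside it, i.e.\ $k\le 3$) one adjoins all of $2^{[n]}\setminus\mc W$ to every container, or (when $\mc W$ is linear, i.e.\ $k \ge 4$) one takes as extra containers, each of size at most $\tfrac12\eps'\binom n{\floor{n/2}}$, all subsets of $2^{[n]}\setminus\mc W$ of that small size, of which there are only $\exp\!\big(O(n\,2^{(1-\Omega(1))n})\big) = \exp\!\big(o(\binom n{\floor{n/2}})\big)$ since $\card{2^{[n]}\setminus\mc W} = 2^{(1-\Omega(1))n}$. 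Taking pairwise unions of the window-containers with the out-of-window containers gives a family $\Gamma$ with $\card{\Gamma} \le \exp(\eps'\binom n{\floor{n/2}})$ in which every container has size at most $(k-1+\eps')\binom n{\floor{n/2}}$, as required.

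The main obstacle is the co-degree verification of the second ingredient: choosing $\mc W$ and $\tau$ correctly requires understanding which sizes and gap-profiles dominate among the $k$-chains of $2^{[n]}$, and how much weight the extreme levels carry --- this is more delicate than the corresponding regularity estimate in graph Tur\'an-type container arguments. Establishing the quantitative supersaturation with the right dependence on $\card{\mc F} - m_{k-1}$ is the other point needing care, though it follows familiar lines once Erd\H{o}s's theorem is available.
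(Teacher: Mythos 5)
The first thing to say is that the paper does not prove this proposition at all: it is imported verbatim (Theorem 4.2) from Collares Neto and Morris~\cite{CM15}, so the only proof to compare against is theirs, which — like your sketch — runs the hypergraph container lemma of~\cite{BMS15,ST15} on the $k$-uniform hypergraph $\mc H$ of $k$-chains and iterates down to the Erd\H{o}s bound via supersaturation. Your strategy is therefore the right one, but as a proof your write-up has a genuine gap at exactly the point you yourself flag as ``the main obstacle'': the codegree verification is asserted, not done. Saying that restricting to a middle window $\mc W$ ``suffices to make the $\ell$-co-degrees at most $\tau^{\ell-1}\bar d$'' names neither $\tau$ nor the estimate that proves it, and in any case the condition needed to keep the iteration going is not a comparison of $\Delta_\ell$ with the average degree of the full window hypergraph: at each round one must verify $\Delta_\ell(\mc H[\mc C]) \le c\,\tau^{\ell-1} e(\mc H[\mc C])/\card{\mc C}$ for the \emph{current} container $\mc C$, whose edge count is only bounded below by the supersaturation estimate (of order $\eps' n^{k-1}\binom{n}{\floor{n/2}}$ once $\card{\mc C}$ barely exceeds $m_{k-1}$), and checking that the codegrees of the chain hypergraph beat this bound for a usable $\tau$ is precisely where all the work in~\cite{CM15} lies (their ``balanced'' supersaturation). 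Without that computation the argument is an outline of the known proof rather than a proof.

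Two smaller points. Your treatment of $\mc I \setminus \mc W$ invokes Erd\H{o}s's theorem to say this family ``is contained in the union of the $k-1$ largest levels outside $\mc W$'' — Erd\H{o}s's theorem bounds the \emph{size} of a $k$-chain-free family, not its structure, and $\mc I \setminus \mc W$ need not lie in those levels. The size bound you want is true, but the correct route is Mirsky's decomposition of $\mc I\setminus\mc W$ into $k-1$ antichains together with the LYM inequality restricted to levels outside the window, giving $\card{\mc I\setminus\mc W} \le (k-1)\binom{n}{\floor{n/2}+w}$ for window half-width $w$. Likewise, the supersaturation statement ``$\card{\mc F}\ge m_{k-1}+s$ forces $c\,s\,n^{k-1}$ $k$-chains'' is not a routine averaging consequence of Erd\H{o}s's theorem; for $k=2$ it is Kleitman's theorem (Theorem~\ref{thm:Kleitman}), and for general $k$ it is a separate result (see~\cite{DGS15} and the supersaturation input used in~\cite{CM15}), which you may cite but should not present as immediate.
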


We now restate our asymptotic result before presenting its short proof.

\logasym*

\begin{proof}[Proof of Proposition \ref{prop:logasym}]
Let $\mc F$ be any set family over $[n]$, and let $c(\mc F)$ denote the number of $(r,k)$-colourings of $\mc F$.  Further, let $\Gamma$ be the set of containers given by Proposition~\ref{prop:containers} for the parameters $k$ and $\eps' = \eps/4$. We wish to show that $c(\mc F)\le 3^{\frac13 r (k-1 + 4 \eps') \binom{n}{\floor{n/2}}}$.

Observe that the colour classes of every $(r,k)$-colouring of $\mc F$ give a partition $\mc F=\mc I_1\cup \ldots \cup \mc I_r$ into $k$-chain-free families. We can then map the $(r,k)$-colourings of $\mc F$ to $r$-tuples of containers $(\mc C_1,\ldots,\mc C_r) \in \Gamma^r$, where $\mc C_i=\mc C(\mc I_i)$ for $1\le i\le r$. Let $c(\mc C_1,\ldots,\mc C_r)$ denote the number of $(r,k)$-colourings of $\mc F$ mapped to the $r$-tuple $(\mc C_1,\ldots,\mc C_r)$. By the pigeonhole principle, there exists an $r$-tuple $(\mc C_1,\ldots,\mc C_r) \in \Gamma^r$ with $c(\mc C_1,\ldots,\mc C_r)\ge c(\mc F)\card{\Gamma}^{-r}$.  Fix such an $r$-tuple and consider the corresponding colourings.

Given $F \in \mc F$, we write $t(F)$ for the number of indices $i$ such that $F \in \mc C_i$.  We then have
\[ \sum_{F \in \mc F} t(F) \le \sum_{i=1}^r \card{\mc C_i} \le r (k-1 + \eps') \binom{n}{\floor{n/2}}. \]
Moreover, the set $F$ can be coloured by the colour $i$ only if $F \in \mc C_i$, and hence there are at most $t(F)$ colours available for $F$.  Thus $c(\mc C_1,\ldots,\mc C_r)$ is bounded by $\prod_{F \in \mc F} t(F)$.  Some straightforward optimisation (see, for instance, Lemma 3.1 in~\cite{CDT16}) shows that this expression is maximised subject to the upper bound on the sum when each $t(F)$ is equal to $3$, and so $c(\mc C_1,\ldots,\mc C_r)$ is at most $3^{ \frac13 r(k-1 + \eps') \binom{n}{\floor{n/2}}}$.
Hence, as required, 
\[ c(\mc F) \le c(\mc C_1,\ldots,\mc C_r)\card{\Gamma}^r \le 3^{ \frac13 r(k-1 + \eps') \binom{n}{\floor{n/2}}} \exp \left( r \eps' \binom{n}{\floor{n/2}} \right) \le 3^{ \frac13 r (k-1 + 4 \eps') \binom{n}{\floor{n/2}}}. \]

To see that this upper bound is essentially correct when $r(k-1)$ is divisible by three, let $\mc F$ be the $r(k-1) / 3$ largest levels of the Boolean lattice.  As $n$ is large enough in terms of $r,k$ and $\eps$, all of these levels are approximately the same size, and we have at least $\left(\frac13 r(k-1 - \eps) \right) \binom{n}{\floor{n/2}}$ sets in total.  

\begin{claim}
We can assign each colour to $k-1$ levels in such a way that each level is assigned three colours.
\end{claim}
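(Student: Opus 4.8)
The plan is to give an explicit cyclic (``round-robin'') assignment and then verify the two degree requirements by a short counting argument. Since $r(k-1)$ is divisible by three, set $L = r(k-1)/3$; this is precisely the number of levels comprising $\mc F$. Identify the set of colours with $\mathbb{Z}/r\mathbb{Z}$ and form the sequence $(s_p)_{0 \le p < r(k-1)}$ defined by $s_p = p \bmod r$ --- equivalently, $k-1$ consecutive copies of the list $0, 1, \ldots, r-1$. Partition $\{0, 1, \ldots, r(k-1)-1\}$ into the $L$ consecutive blocks $B_j = \{3j, 3j+1, 3j+2\}$ for $0 \le j < L$, and declare the colours assigned to the $j$-th level of $\mc F$ to be the residues of $3j, 3j+1, 3j+2$ modulo $r$.

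Next I would check that each level receives three distinct colours. The numbers $3j$, $3j+1$, $3j+2$ are three consecutive residues modulo $r$, and since $r \ge 3$ they are pairwise distinct; hence each level is genuinely assigned a set of three colours, as required.

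Then I would check that each colour is assigned to exactly $k-1$ distinct levels. A fixed colour $i$ occurs in the sequence $(s_p)$ exactly at the positions $i, i+r, \ldots, i+(k-2)r$, that is, $k-1$ times in total (once in each of the $k-1$ copies of $0, \ldots, r-1$). Any two of these positions differ by a positive multiple of $r$, hence by at least $3$, so no two of them lie in a common block $B_j$ (which has diameter $2$). Therefore colour $i$ occurs in exactly $k-1$ of the blocks, i.e. is assigned to exactly $k-1$ distinct levels. Combining the two verifications proves the claim.

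I do not expect a genuine obstacle here: the only point to be careful about is the edge behaviour --- when $r = 3$ every block consists of all three colours, and for general $r$ a block may ``wrap around'' $\mathbb{Z}/r\mathbb{Z}$ --- but in all cases the three residues in a block remain consecutive and hence distinct, so the argument goes through verbatim. (Alternatively, one could invoke the Gale--Ryser theorem to construct a biregular bipartite graph between the $r$ colours and the $L$ levels with degrees $k-1$ and $3$ respectively; the required majorisation inequalities all reduce to $r \ge 3$. The explicit construction above is preferable, being entirely self-contained.)
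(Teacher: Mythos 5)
Your proof is correct and is essentially the paper's own argument in transposed form: the paper lists the $r(k-1)/3$ levels three times and cuts the resulting sequence of length $r(k-1)$ into colour-blocks of size $k-1$, whereas you list the $r$ colours $k-1$ times and cut the same-length sequence into level-blocks of size $3$; in both cases the verification is the identical spacing observation (repetitions of a fixed item are at least one block-length apart, so no block sees the same item twice). Both constructions work, and yours is spelled out in slightly more detail than the paper's.
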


\begin{proof}
For convenience we name the levels $\ell_1,\ell_2,\ldots,\ell_{r(k-1)/3}$. Let $L$ be the ordered list $(\ell_1,\ldots,\ell_{r(k-1)/3},\ell_1,\ldots,\ell_{r(k-1)/3},\ell_1,\ldots,\ell_{r(k-1)/3})$ in which each level appears three times in $L$. We then assign the first colour to the first $k-1$ levels in $L$, the second colour to the next $k-1$ levels in $L$, and so on.
Since there are $r(k-1)/3 \ge k-1$ distinct levels, no colour is assigned to the same level twice, so each colour is used on $k-1$ distinct levels. As each level appears three times in $L$, it gets three distinct colours.
\end{proof}

Using the assignment of colours given by the claim, we colour each set with one of its three available colours arbitrarily.  Since each colour class spans $k-1$ levels, there are no monochromatic $k$-chains, and hence each such colouring is an $(r,k)$-colouring of $\mc F$.  This shows that $f(r,k;n) \ge c(\mc F) \ge 3^{\frac13 r(k-1 - \eps) \binom{n}{\floor{n/2}}}$.
\end{proof}

When three does not divide $r(k-1)$, we do not have a construction matching our upper bound, and indeed, we do not believe it to be tight.  In the concluding remarks we make some suggestions as to what the truth might be.

\section{Concluding remarks}\label{sec:conclusion}

In this paper we initiated the study of the Erd\H{o}s--Rothschild problem in the context of Sperner theory, which asks for the set families over the ground set $[n]$ with the maximum number of $(r,k)$-colourings, which are $r$-colourings of the set family that avoid monochromatic $k$-chains.  We showed that for $(r,k) \in \{ (3,2) \} \cup \{ (2,k) : k \ge 2 \}$ (and $n$ sufficiently large), the optimal families for the Erd\H{o}s--Rothschild problem are the largest $k$-chain-free families, which are the $k-1$ middle levels of the Boolean lattice.

We further showed that these families need not be optimal for larger values of $r$, as larger set families, which contain many $k$-chains, may still admit more $(r,k)$-colourings.  For example, when $n$ is odd, the union of the two largest uniform levels, which are each maximum-sized antichains, contains more $(4,2)$-colourings than either of the antichains alone.  However, when $n$ is even, there is a unique largest antichain, and this might still maximise the number of $(4,2)$-colourings.

\begin{ques}
If $n$ is even and sufficiently large, does $\binom{[n]}{n/2}$ maximise the number of $(4,2)$-colourings?
\end{ques}

Just as with the $(2,k)$-colourings, we would also expect the number of $(3,k)$-colourings to be maximised by the largest $k$-chain-free families.  In our proof for the $(2,k)$ case, we used the fact that if one has a monochromatic $(k-1)$-chain $F_1 \subset F_2 \subset \hdots \subset F_{k-1}$, then the colour of any set containing $F_{k-1}$ is determined.  When one has three colours, however, the colours of such sets are merely restricted to being one of the two other colours.  While this is quite a severe restriction when there are many $k$-chains present, we could not exploit it in our calculations to deduce that the $k$-chain-free families were optimal.  It would appear that some further arguments may be necessary.

\begin{ques}
For $k \ge 3$ and $n$ sufficiently large, do the largest $k$-chain-free families in $2^{[n]}$ also maximise the number of $(3,k)$-colourings?
\end{ques}

In Section~\ref{sec:logasym}, we used containers for $k$-chain-free families to obtain an upper bound on the number of $(r,k)$-colourings a family could have, showing that for every $\eps > 0$ and $n$ sufficiently large, $f(r,k;n) \le 3^{\frac13 r(k-1 + \eps) \binom{n}{\floor{n/2}}}$.  When $r(k-1)$ is divisible by three, we can show this bound to be log-asymptotically sharp, as one can distribute $r$ colours over $\frac13 r(k-1)$ uniform levels in such a way that every set has three available colours, which matches the solution to the optimisation problem in the upper bound.

When $r(k-1)$ is not divisible by three, such a partition of the colours is not feasible.  If we could apply the optimisation problem to the uniform levels, instead of to the individual sets, then it would be best to take $\ceil {r(k-1)/ 3}$ levels, and distribute the $r$ colours in such a way that each colour is used on $k-1$ levels, and all levels have three colours, except for one or two that only receive two colours.  If this construction is indeed best possible, one would need to improve the upper bound to obtain log-asymptotically sharp results.

\begin{ques}
Can we improve the upper bound of Proposition~\ref{prop:logasym} when $r(k-1)$ is not divisible by three?  If we could show, for $\eps > 0$ and $n$ large enough,
\[ f(r,k;n) \le \begin{cases}
\left( 2^2 \cdot 3^{ \frac{r(k-1) - 4}{3} } \right)^{(1 + \eps) \binom{n}{\floor{n/2}}} & \mbox{if } r(k-1) \equiv 1 \mod 3, \\
\left( 2 \cdot 3^{ \frac{r(k-1) - 2}{3} } \right) ^{(1 + \eps) \binom{n}{\floor{n/2}}} & \mbox{if } r(k-1) \equiv 2 \mod 3,
\end{cases} \]
then we would have log-asymptotically correct bounds in all cases.
\end{ques}

Finally, from a more general viewpoint, one could broaden the study of Erd\H{o}s--Rothschild problems, and seek to extend various other extremal problems in this fashion.  In many of the problems studied to date, the families maximising the number of $2$- or $3$-Erd\H{o}s--Rothschild-colourings are those solving the original extremal problem.\footnote{Reference \cite{HKL14} gives instances for which this is not the case.}  It would be very interesting to develop a ``metatheorem", identifying which features of an extremal problem ensure that the trivial lower bound is tight for the two- and three-colour Erd\H{o}s--Rothschild problems.

\end{document}